\theoremstyle{plain}
\newtheorem{thm}{Theorem}[section]
\newtheorem{lem}[thm]{Lemma}
\newtheorem{pro}[thm]{Proposition}
\newtheorem{thm*}{Theorem}
\theoremstyle{definition}
\numberwithin{equation}{section}
\DeclareMathOperator{\Hom}{Hom}
\DeclareMathOperator{\Aut}{Aut}
\DeclareMathOperator{\End}{End}
\DeclareMathOperator{\Ext}{Ext}
\DeclareMathOperator{\Tor}{Tor}
\DeclareMathOperator{\Mod}{\mathsf{Mod}}
\DeclareMathOperator{\add}{\mathsf{add}}
\DeclareMathOperator{\Fd}{\mathsf{fd}}
\newcommand{\xto}[1]{\xrightarrow{#1}}
\renewcommand{\bar}{\overline}
\DeclareMathOperator{\op}{op}
\DeclareMathOperator{\kD}{\mathit{D}}
\DeclareMathOperator{\Ker}{Ker} %\ker = ker
\DeclareMathOperator{\Coker}{Coker}
\renewcommand{\Im}{\mathop{\mathrm{Im}}}
\DeclareMathOperator{\Ann}{Ann}
\DeclareMathOperator{\Dim}{\underline{dim}}
\DeclareMathOperator{\RHom}{{\bf R}Hom}
\newcommand{\ideal}[1]{\left\langle #1 \right\rangle}
\DeclareMathOperator{\id}{id} % identity map
\renewcommand{\epsilon}{\varepsilon}
\newcommand{\al}{\alpha}
\newcommand{\ep}{\epsilon}
\newcommand{\si}{\sigma}
\newcommand{\la}{\lambda}
\newcommand{\Ga}{\Gamma}
\newcommand{\mbL}{\mathbb{L}}
\newcommand{\mbN}{\mathbb{N}}
\newcommand{\mbZ}{\mathbb{Z}}
\newcommand{\mcA}{\mathcal{A}}
\newcommand{\mcB}{\mathcal{B}}
\newcommand{\mcC}{\mathcal{C}}
\newcommand{\mcE}{\mathcal{E}}
\newcommand{\mcF}{\mathcal{F}}
\newcommand{\mcI}{\mathcal{I}}
\newcommand{\mcM}{\mathcal{M}}
\newcommand{\mcR}{\mathcal{R}}
\newcommand{\mcS}{\mathcal{S}}
\newcommand{\mcZ}{\mathcal{Z}}
\newcommand{\sD}{\mathsf{D}}
\DeclareMathOperator{\gr}{gr}
\DeclareMathOperator{\Rep}{\mathsf{Rep}}
\DeclareMathOperator{\GL}{\mathsf{GL}}
\newcommand{\dbslash}{/\!\!/}
\begin{document}
\title{On deformed preprojective algebras}
\author{William Crawley-Boevey ${}^1$}

\address{%William Crawley-Boevey: 
Fakult\"at f\"ur Mathematik, Universit\"at Bielefeld, 33501 Bielefeld, Germany}
\email{wcrawley@math.uni-bielefeld.de}

\author{Yuta Kimura}
\address{%Yuta Kimura: 
Graduate School of Mathematical Sciences, The University of Tokyo, 3-8-1 Komaba Meguro-ku Tokyo 153-8914, Japan}
\email{ykimura@ms.u-tokyo.ac.jp}

%\date{\today}

\subjclass[2010]{Primary 16G20; Secondary 16E65, 16S80.}

% 16G20 Representations of quivers and partially ordered sets
% 16E65 Homological conditions on associative rings (generalizations of regular, Gorenstein, Cohen-Macaulay rings, etc.)
% 16S80 Deformations of associative rings

\keywords{Preprojective algebra, Calabi-Yau algebra, Reflection functor, Tilting ideal}

\thanks{${}^1$ Corresponding author.}

\thanks{The authors have been supported by the Alexander von Humboldt Foundation in the framework of an Alexander von Humboldt Professorship endowed by the German Federal Ministry of Education and Research.}

\begin{abstract}
Deformed preprojective algebras are generalizations of the usual preprojective algebras introduced by Crawley-Boevey and Holland, which have applications to Kleinian singularities, the Deligne-Simpson problem, integrable systems and noncommutative geometry. In this paper we offer three contributions to the study of such algebras: (1) the 2-Calabi-Yau property; (2) the unification of the reflection functors of Crawley-Boevey and Holland with reflection functors for the usual preprojective algebras; and (3) the classification of tilting ideals in 2-Calabi-Yau algebras, and especially in deformed preprojective algebras for extended Dynkin quivers.
\end{abstract}
\maketitle
%\tableofcontents
%%%%%%%%%%%%%%%%%%%%
%%%%%%%%%%%%%%%%%%%%
%%%%%%%%%%%%%%%%%%%%
%----Section
\section{Introduction}
Deformed preprojective algebras were introduced in \cite{CBH98} in order to study noncommutative deformations of Kleinian singularities; they have been studied further in \cite{CB-Geo}, and used in \cite{CB-OM} to solve an additive analogue of the Deligne-Simpson problem. They also have a role in integrable systems, see e.g.~\cite{BCE}, and noncommutative geometry, see e.g.~\cite{BGK}.
In this paper we further develop the theory of these algebras. 

The definition is as follows. Let $K$ be a field and let $Q$ be a quiver with vertex set $Q_0$ and arrow set $Q_1$. We assume always that $Q$ is finite, and write $h(a)$ and $t(a)$ for the head and tail vertices of an arrow $a$. 
The \emph{double} $\overline{Q}$ of $Q$ is obtained by adjoining a reverse arrow $a^*:j\to i$ for each arrow $a:i\to j$ in $Q$.
We use the notation $(a^\ast)^{\ast}=a$, and for an arrow $a\in \bar{Q}_1$, we set $\ep(a)=1$ if $a\in Q_1$ and $\ep(a)=-1$ otherwise.
Let $I = Q_0$ and fix a \emph{weight} $\lambda\in K^I$. The corresponding \emph{deformed preprojective algebra} is
\[
\Pi^\lambda(Q) := K\overline{Q}/ (\rho_\lambda)
\quad\text{where}\quad 
\rho = \sum_{a\in \bar Q_1} \ep(a) a a^*
\quad\text{and}\quad 
\rho_\lambda = \rho - \sum_{i\in I} \lambda_i e_i.
\]
Here $K\overline{Q}$ denotes the path algebra of $\overline{Q}$. We use the convention that the path $ba$ exists if $h(a)=t(b)$, and the trivial path at vertex $i$ is denoted $e_i$. The usual (undeformed) preprojective algebra is $\Pi^0(Q)$.

Given a ring $R$, we usually consider left $R$-modules, and write $\Mod R$ for the corresponding category. 
Given an algebra $A$, we consider $A\otimes_K A$ as an $A$-bimodule using the outer $A$-actions, so $b(a\otimes a')c = ba\otimes a'c$. If $M$ is an $A$-bimodule, the space $\Hom_{AA}(M,A\otimes_K A)$ of $A$-bimodule homomorphisms becomes an $A$-bimodule using the inner $A$-actions, that is, for $f\in \Hom(M,A\otimes_K A)$ we have $(bfc)(m) = \sum_\lambda a_\lambda c \otimes b a'_\lambda$ where $f(m) = \sum_\lambda a_\lambda \otimes a'_\lambda$.
The algebra $A$ is said to be \emph{homologically smooth} if $A$ has a finite projective resolution by finitely generated $A$-bimodules, so $A$ is isomorphic to a perfect complex in the derived category of $A$-bimodules, and it is
said to be $d$-Calabi-Yau, for a natural number $d$, if it is homologically smooth and $\RHom_{AA}(A,A\otimes_K A) \cong A[-d]$ in that same derived category \cite[Definition 3.2.3]{Gin06}, \cite[Definition 7.2]{vdB15}. Our first result may already be known to specialists, but we didn't find a reference. 
After releasing our work as a preprint, Travis Schedler explained to us a different proof, based on the methods of \cite{KS}.

\begin{thm}\label{thm-ass-gr}
If $Q$ is a connected non-Dynkin quiver, then $\Pi^\lambda(Q)$ is 2-Calabi-Yau.
\end{thm}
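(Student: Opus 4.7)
The plan is to lift the standard bimodule resolution of $\Pi^0(Q)$ to the deformed algebra via an associated-graded argument, as suggested by the label of the theorem. Equip $\Pi^\lambda = \Pi^\lambda(Q)$ with the ascending path-length filtration $F_\bullet$, where $F_n$ is the image in $\Pi^\lambda$ of the span of paths in $K\bar Q$ of length at most $n$. Since the deformation term $\sum_i \lambda_i e_i$ in $\rho_\lambda$ sits in filtration degree $0$ while the quadratic part $\rho$ sits in degree $2$, there is a canonical surjection $\Pi^0(Q)\twoheadrightarrow \gr_F \Pi^\lambda$, and by the PBW-type flatness theorem for deformed preprojective algebras proved in \cite{CBH98} this map is an isomorphism.

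Next, consider the candidate bimodule complex
\[
0 \to \Pi^\lambda \otimes_{KI} \Pi^\lambda \xto{d_2} \Pi^\lambda \otimes_{KI} K\bar Q_1 \otimes_{KI} \Pi^\lambda \xto{d_1} \Pi^\lambda \otimes_{KI} \Pi^\lambda \xto{\mu} \Pi^\lambda \to 0,
\]
where $\mu$ is multiplication, $d_1(e_i\otimes a\otimes e_j) = a\otimes e_j - e_i\otimes a$, and $d_2$ is built from the noncommutative derivatives of $\rho_\lambda$ with respect to the arrows of $\bar Q$. Equip each term with the tensor-product filtration induced by $F_\bullet$. The associated graded complex is the analogous complex for $\Pi^0(Q)$, which is exact for connected non-Dynkin $Q$ by the classical bimodule resolution for ordinary preprojective algebras. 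Because the filtration is exhaustive and bounded below in each internal degree, a standard induction on filtration degree transports exactness back to the original complex. This gives a finite projective resolution of $\Pi^\lambda$ by finitely generated $\Pi^\lambda$-bimodules, proving homological smoothness with projective dimension $\leq 2$.

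For the $2$-Calabi-Yau property, apply $\Hom_{\Pi^\lambda\otimes\Pi^\lambda}(-,\,\Pi^\lambda\otimes_K\Pi^\lambda)$ to the resolution. The middle term is self-dual using the bilinear pairing on $K\bar Q_1$ given by the sign function $\ep(-)$, and under this duality the differential $d_2$ corresponds to $d_1$ while the outer terms are naturally identified. A direct check shows that the resulting complex is isomorphic to the original resolution shifted by $2$, which yields the required quasi-isomorphism $\RHom_{\Pi^\lambda\otimes\Pi^\lambda}(\Pi^\lambda, \Pi^\lambda\otimes_K\Pi^\lambda)\simeq \Pi^\lambda[-2]$.

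The main obstacle is the verification of exactness of the deformed complex, which hinges on the PBW identification $\gr_F \Pi^\lambda \cong \Pi^0(Q)$; this is really the only place the non-Dynkin hypothesis enters, via the known fact that the analogous complex for $\Pi^0(Q)$ is exact exactly in this setting (in the Dynkin case the complex fails to be exact at the leftmost term, reflecting the finite-dimensionality and non-smoothness of $\Pi^0(Q)$). The self-duality step is more formal, provided one sets up the signs in the differentials $d_1, d_2$ consistently with the convention $\ep(a^*)=-\ep(a)$, so that the pairing on $K\bar Q_1$ is genuinely skew and matches the boundary maps.
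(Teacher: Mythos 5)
Your proposal follows essentially the same route as the paper: identify $\gr \Pi^\lambda(Q)$ with $\Pi^0(Q)$ (the PBW property), deduce exactness of the explicit length-two bimodule complex for $\Pi^\lambda(Q)$ from the corresponding resolution of $\Pi^0(Q)$ by a filtered/graded argument, and obtain the $2$-Calabi--Yau property from the self-duality of that complex under $\Hom_{\Pi\Pi}(-,\Pi\otimes_K\Pi)$; this is precisely the structure of Lemma~\ref{lem-pbw}, Propositions~\ref{pro-bi-resol-a} and~\ref{pro-cx-dual}, and Theorem~\ref{thm-inj-cy}.

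One caveat deserves mention: the PBW isomorphism $\Pi^0(Q)\isoto\gr\Pi^\lambda(Q)$ is not quotable from \cite{CBH98} in the generality you need --- that reference only gives the natural surjection $\Pi^0(Q)\to\gr\Pi^\lambda(Q)$, and the isomorphism for arbitrary connected non-Dynkin $Q$ is exactly what the paper must prove (Lemma~\ref{lem-pbw}), using the Koszulity of $\Pi^0(Q)$ \cite{EE07} together with the PBW deformation theorem of \cite{HVOZ}. So the step you treat as a known input is in fact the main point requiring proof, though the statement itself is true and your use of it is correct. A minor structural difference: you transport exactness of the entire complex from the associated graded (legitimate, since the filtration is exhaustive and bounded below), whereas the paper establishes exactness of the right-hand segment $P_0\to P_1\to P_0\to\Pi\to 0$ directly for every $\lambda$ via Schofield's resolutions and invokes the filtration argument only for the injectivity of the leftmost map.
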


This is proved in section \ref{sec-projres-CY}. If $Q$ is a Dynkin quiver, then $\Pi^0(Q)$ is a finite-dimensional self-injective algebra, so not in general 2-Calabi-Yau. If $Q$ is connected and non-Dynkin, it is known that $\Pi^0(Q)$ is 2-Calabi-Yau, see \cite{Boc08,CB-except,BBK02,GLS07}.

The \emph{dimension vector} of a finite-dimensional $\Pi^\lambda(Q)$-module $M$ is $\Dim M\in\mbZ^I$. We write $\epsilon_i\in\mbZ^I$ for the coordinate vector of a vertex $i$. The \emph{Ringel form} of $Q$ is the bilinear form 
$\ideal{-,-} : \mbZ^I \times \mbZ^I \to \mbZ$
given by
\begin{equation}\label{def-ringel-form-a}
\ideal{\alpha,\beta} := \sum_{i\in I}\alpha_i\beta_i-\sum_{a\in Q_1} \alpha_{t(a)}\beta_{h(a)}.
\end{equation}
We call $q(\al):=\langle \al, \al \rangle$ the \emph{quadratic form of $Q$},
and have a symmetric bilinear form $(\alpha,\beta):=\ideal{\alpha,\beta}+\ideal{\beta,\alpha}$.

We say that a vertex $i$ in $Q$ is \emph{loop-free} if there are no arrows with both head and tail at $i$, or equivalently $q(\epsilon_i)=1$. For such a vertex, the corresponding reflections $s_i\in\Aut(\mbZ^I)$ and $r_i\in\Aut(K^I)$ are defined by
\[
s_i\alpha:=\alpha-(\alpha,\ep_i)\ep_i,
\qquad
(r_i \la)_j=\la_j-(\ep_i,\ep_j)\la_i.
\]
For $\lambda\in K^I$ and $\alpha\in\mbZ^I$, let $\lambda\cdot\alpha=\sum_i\lambda_i\alpha_i$.
Then it is easy to see that $r_i(\la)\cdot\alpha=\la\cdot s_i(\alpha)$ for $\la,\alpha\in K^I$.
The \emph{Weyl group} $W$ is the group of automorphisms of $\mbZ^I$ generated by the simple reflections for loop-free vertices.
Then $W$ acts on $\mbZ^I$ and $K^I$ with $(w\la)\cdot \alpha = \la \cdot (w^{-1}\alpha)$ for $w\in W$, $\alpha\in\mbZ^I$ and $\la\in K^I$.
According to \cite{CBH98}, deformed preprojective algebras come equipped with \emph{reflection functors}
\[
E_i : \Mod\Pi^\lambda(Q) \to \Mod\Pi^{r_i(\lambda)}(Q)
\]
which exist when $i$ is a loop-free vertex and $\lambda_i\neq 0$. Moreover, if $M$ is finite-dimensional, then
\[
\Dim E_i(M) = s_i(\Dim M).
\]

The condition $\lambda_i\neq 0$ means that these reflection functors do not exist for $\Pi^0(Q)$. Instead there is a theory of tilting modules and functors, see \cite{Iyama-Reiten, BIRS,BK,BKT}. Our next results unify these two situations. Using cokernels and kernels, for any loop-free vertex $i$ we construct functors 
\[
C_i^\lambda, K_i^\lambda : \Mod \Pi^\lambda(Q) \to \Mod \Pi^{r_i\la}(Q)
\]
in section \ref{sec-ref-cokker}. Since $r_i(r_i\lambda) = \lambda$, it follows that $C_i^{r_i\lambda}$ and $K_i^{r_i\lambda}$ are functors in the reverse direction. Note that if $\lambda_i\neq 0$, then also $(r_i\lambda)_i \neq 0$. If $\lambda_i=0$, then $r_i\lambda = \lambda$, so $C_i^\lambda$ and $K_i^\lambda$ are functors from $\Mod \Pi^\lambda(Q)$ to itself.

If $i$ is a loop-free vertex and $\lambda_i=0$, there is a unique trivial simple module $S_i$ for $\Pi^\lambda(Q)$ with dimension vector the coordinate vector for $i$. We denote its annihilator by $I_i$. This is a 2-sided ideal in $\Pi^\lambda(Q)$ with $\Pi^\lambda(Q)/I_i\cong K$. Also $I_i = \Pi^\lambda(Q) (1-e_i) \Pi^\lambda(Q)$, from which it follows that $I_iI_i = I_i$.

\begin{thm}\label{intro-thm-CK}
If $i$ is a loop-free vertex, then
\begin{enumerate}[{\rm (i)}]
\item $C_i^\lambda$ is left adjoint to $K_i^{r_i\lambda}$.
\item If $\la_i\neq 0$, then $C_i^\lambda\cong K_i^\lambda\cong E_i$, and $C_i^\lambda$ is an equivalence of categories which satisfies $C_i^{r_i\lambda}C_i^\lambda \cong \id_{\Mod\Pi^{\la}(Q)}$. 
\item If $\la_i=0$, then $C_i^\lambda \cong I_i\otimes_{\Pi^{\la}(Q)}(-)$ and $K_i^\lambda \cong \Hom_{\Pi^{\la}(Q)}(I_i, -)$.
\end{enumerate}
\end{thm}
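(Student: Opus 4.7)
The plan is to begin by unpacking the construction from Section~\ref{sec-ref-cokker}: for a loop-free vertex $i$ and $M\in\Mod\Pi^\la(Q)$, one has the natural maps
\[
\mu_i^M : V_i(M) := \bigoplus_{a\in\bar Q_1,\, h(a)=i} e_{t(a)} M \longrightarrow e_i M, \quad (m_a)\mapsto \sum_a \ep(a)\,a\,m_a,
\]
\[
\nu_i^M : e_i M \longrightarrow V_i(M), \quad m\mapsto (a^* m)_a,
\]
which satisfy $\mu_i^M\nu_i^M=\la_i\id$ by the $i$-th component of $\rho_\la$; $C_i^\la(M)$ (resp.\ $K_i^\la(M)$) replaces $e_i M$ by $\Coker(\nu_i^M)$ (resp.\ $\Ker(\mu_i^M)$), with module structure over $\Pi^{r_i\la}(Q)$ supplied by the cokernel/kernel universal property. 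Part~(i) is then essentially formal: a morphism $C_i^\la M\to N$ is a quiver-representation morphism at every vertex $j\neq i$ together with a map $V_i(M)\to e_i N$ vanishing on $\im(\nu_i^M)$, compatibly with all arrows at $i$; dually a morphism $M\to K_i^{r_i\la} N$ is the same data at $j\neq i$ together with a map $e_i M\to V_i(N)$ whose postcomposition with $\mu_i^N$ is zero. The two kinds of data are matched via the canonical maps supplied by the arrows at $i$, yielding the adjunction bijection naturally in $M$ and $N$.

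For~(ii), assume $\la_i\neq 0$. The identity $\mu_i^M\nu_i^M=\la_i\id$ forces $\nu_i^M$ to be split monic and $\mu_i^M$ split epic, so $V_i(M)=\im(\nu_i^M)\oplus\Ker(\mu_i^M)$, and the canonical comparison $\Coker(\nu_i^M)\to\Ker(\mu_i^M)$ is an isomorphism; hence $C_i^\la\cong K_i^\la$. This common construction coincides, term-by-term, with the definition of $E_i$ in \cite{CBH98}, giving $C_i^\la\cong K_i^\la\cong E_i$; the equivalence statement and the inversion $C_i^{r_i\la}C_i^\la\cong\id_{\Mod\Pi^\la(Q)}$ are then imported from the corresponding results of \cite{CBH98}.

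For~(iii), assume $\la_i=0$. The key step is to exhibit, as right $\Pi^\la$-modules, the presentation
\[
e_i\Pi^\la \xrightarrow{\psi^R} \bigoplus_{a:\,h(a)=i} e_{t(a)}\Pi^\la \xrightarrow{\phi^R} e_i I_i \to 0,
\]
with $\psi^R(s)=(a^* s)_a$ and $\phi^R((r_a))=\sum_a \ep(a) a\, r_a$. That $\phi^R\psi^R=0$ is immediate from $\sum_a\ep(a) a a^*=\la_i e_i=0$, and exactness at the middle term is the nontrivial point: it follows from the 2-Calabi-Yau bimodule resolution of $\Pi^\la$ provided by Theorem~\ref{thm-ass-gr} (Section~\ref{sec-projres-CY}), which on multiplying by $e_i$ on the right restricts to the above presentation of $e_i I_i$. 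Together with the trivial identity $e_k I_i = e_k\Pi^\la$ for $k\neq i$, this assembles into a right-module presentation of $I_i$. Tensoring with $M$ and computing vertex-by-vertex gives $(I_i\otimes_{\Pi^\la} M)_i = \Coker(\nu_i^M)$ and $(I_i\otimes_{\Pi^\la} M)_k = e_k M$ for $k\neq i$, which is exactly $C_i^\la M$; compatibility of arrow actions follows from the left $\Pi^\la$-action on $I_i$ together with the explicit form of $\phi^R$. Dually, applying $\Hom_{\Pi^\la}(-,M)$ to a left-module version of the same presentation yields $\Ker(\mu_i^M)$ at vertex $i$ and $e_k M$ elsewhere, identifying $\Hom_{\Pi^\la}(I_i,M)\cong K_i^\la M$.

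The main obstacle is the exactness of the middle term of the presentation of $e_i I_i$ above. This is exactly where the 2-Calabi-Yau property of Theorem~\ref{thm-ass-gr} is essential, since the proof proceeds by restricting the length-2 bimodule resolution of $\Pi^\la$ produced in Section~\ref{sec-projres-CY}; the relations among the ``generators'' $a\in e_i\Pi^\la(1-e_i)$ of $e_i I_i$ are precisely the deformed preprojective relations $\rho_\la$ at vertex $i$, and the statement that there are no further relations is what the 2-Calabi-Yau resolution provides. Once this input is available, the identifications of $I_i\otimes(-)$ with $C_i^\la$ and of $\Hom_{\Pi^\la}(I_i,-)$ with $K_i^\la$ reduce to vertex-by-vertex bookkeeping.
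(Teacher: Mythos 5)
Your parts (i) and (ii) follow the paper's own route: the adjunction is obtained by matching data through the universal properties of the cokernel and kernel at the vertex $i$, and for $\la_i\neq 0$ the identity $\pi\mu=\la_i\id$ splits the sequence, identifying $C_i^\la\cong K_i^\la$ with the reflection functor $E_i$ of \cite{CBH98}, from which the equivalence and the inversion are imported. For part (iii) you diverge from the paper, which simply delegates to \cite[section 5]{BKT} and \cite[section 2]{BK}: you instead derive a projective presentation of $e_iI_i$ from the bimodule resolution of $\Pi^\la$ and then compute $I_i\otimes_{\Pi^\la}(-)$ and $\Hom_{\Pi^\la}(I_i,-)$ vertex by vertex. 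This is a legitimate and essentially self-contained version of the cited arguments, and the vertex-by-vertex identifications with $\Coker$ and $\Ker$ do check out (up to harmless sign conventions, which differ from the induced maps by the automorphism multiplying the $a$-component by $\ep(a)$).

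Two points need repair, though neither is fatal. First, the presentation you want is not obtained by ``multiplying by $e_i$ on the right'': $P_\bullet e_i$ and $e_iP_\bullet$ have the wrong terms. What produces $e_i\Pi \to \bigoplus_{h(a)=i}e_{t(a)}\Pi \to e_iI_i\to 0$ is applying $S\otimes_\Pi(-)$ to the bimodule complex of Proposition~\ref{pro-bi-resol-a}, where $S$ is the one-dimensional \emph{right} module at $i$ (so only the summands with $h(a)=i$ survive in $S\otimes_\Pi P_1$, and $S\otimes_\Pi P_0\cong e_i\Pi$ maps onto $S$ with kernel $e_iI_i$). Second, your claim that the 2-Calabi-Yau property of Theorem~\ref{thm-ass-gr} is \emph{essential} is both an over-claim and a scope problem: Theorem~\ref{intro-thm-CK} carries no non-Dynkin hypothesis, while Theorems~\ref{thm-ass-gr} and~\ref{thm-inj-cy} do. In fact the exactness you need follows from the four-term exact sequence $P_0\xto{f}P_1\xto{g}P_0\xto{h}\Pi\to 0$ of Proposition~\ref{pro-bi-resol-a} alone, valid for every $Q$ and every $\la$: writing $C=\im g=\ker h$, the sequence $0\to C\to P_0\to\Pi\to 0$ stays exact after $S\otimes_\Pi(-)$ because $\Tor_1^\Pi(S,\Pi)=0$, so $S\otimes_\Pi C\to S\otimes_\Pi P_0$ is injective, and right-exactness applied to $P_0\to P_1\to C\to 0$ then gives $\ker(S\otimes g)=\im(S\otimes f)$. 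With that substitution (and the corresponding left-module presentation of $I_ie_i$ for the $\Hom$ side), your argument covers the full generality of the statement.
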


This is proved in section \ref{sec-ref-cokker}. In future we drop the superscript $\lambda$ and just write $C_i$ and $K_i$, leaving it to the reader to interpret appropriately. In section \ref{sec-braid} we prove the following.

\begin{thm}\label{intro-thm-iji-jij}
If $i$ and $j$ are loop-free vertices, then
\begin{enumerate}[{\rm (i)}]
\item $C_iC_j \cong C_jC_i$ and $K_i K_j \cong K_j K_i$ if there is no arrow between $i$ and $j$ in $Q$
\item $C_iC_jC_i \cong C_jC_iC_j$ and $K_iK_jK_i \cong K_jK_iK_j$ if there is exactly one arrow between $i$ and $j$ in $Q$.
\end{enumerate}
\end{thm}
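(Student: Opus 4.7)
The plan is to reduce the identities for the kernel functors $K_i$ to those for the cokernel functors $C_i$ by adjunction, and then to prove the identities for $C_i$ by recasting them as bimodule identities.

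By Theorem~\ref{intro-thm-CK}(i), $C_i^\la$ is left adjoint to $K_i^{r_i\la}$. Iterating $(F\circ G)^R=G^R\circ F^R$, the composite $C_iC_jC_i:=C_i^{r_jr_i\la}\circ C_j^{r_i\la}\circ C_i^\la$ has right adjoint $K_i^{r_i\la}\circ K_j^{r_jr_i\la}\circ K_i^{r_ir_jr_i\la}$, which in the shorthand of the statement is $K_iK_jK_i$. By uniqueness of right adjoints, $C_iC_jC_i\cong C_jC_iC_j$ forces $K_iK_jK_i\cong K_jK_iK_j$; the analogous argument handles (i). So it suffices to prove the two $C$-statements.

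As a left adjoint between module categories, $C_i^\la$ is cocontinuous, so by Eilenberg--Watts $C_i^\la\cong T_i^\la\otimes_{\Pi^\la(Q)}(-)$ with $T_i^\la:=C_i^\la(\Pi^\la(Q))$ a $\Pi^{r_i\la}(Q)$-$\Pi^\la(Q)$-bimodule. Theorem~\ref{intro-thm-CK} identifies $T_i^\la$: it equals $I_i$ when $\la_i=0$, and is an invertible bimodule implementing the Morita equivalence $E_i$ when $\la_i\neq 0$. The desired functor identities then reduce to bimodule identities
\begin{align*}
T_i^{r_j\la}\otimes T_j^\la &\cong T_j^{r_i\la}\otimes T_i^\la &&\text{(no arrow),}\\
T_i^{r_jr_i\la}\otimes T_j^{r_i\la}\otimes T_i^\la &\cong T_j^{r_ir_j\la}\otimes T_i^{r_j\la}\otimes T_j^\la &&\text{(one arrow),}
\end{align*}
where the tensor products are taken over the intermediate algebras. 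The no-arrow case is essentially formal: the cokernel defining $T_i^\la$ only uses arrows incident to $i$, so when no arrow joins $i$ and $j$ the constructions at $i$ and $j$ act on disjoint data and commute naturally.

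For the one-arrow case we split according to whether the weight entries $\la_i,\la_j$ and the analogous reflected entries along the braid chain vanish. If all the relevant entries are non-zero, each factor in the triple tensor product is an invertible bimodule and the identity follows from the braid relation for the CBH reflection functors $E_i$ established in \cite{CBH98}. If $\la_i=\la_j=0$, then the relevant reflected entries at $i,j$ also vanish, and both sides reduce to identities between the tilting ideals $I_i,I_j$ in $\Pi^0(Q)$, which are known from \cite{Iyama-Reiten,BIRS,BKT}. The main obstacle is the mixed case, where some of the relevant entries vanish and others do not: neither of the previous braid relations applies, and the identity combines invertible Morita-type bimodules with non-invertible tilting-type ideals. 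Here we construct the isomorphism directly from the explicit cokernel presentation of $T_i^\la$ developed in Section~\ref{sec-ref-cokker}, tracking how $r_i$ and $r_j$ propagate the vanishing pattern of weights and verifying that the cokernels appearing on both sides match after unwinding the tensor products.
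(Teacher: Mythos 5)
Your reduction of the $K$-statements to the $C$-statements via uniqueness of adjoints is fine and matches the paper, and the Eilenberg--Watts reformulation as bimodule identities is legitimate. The problem is with how you then propose to prove the $C$-identities in the one-arrow case. Your case analysis rests on two citations that do not carry the weight you put on them. First, \cite{CBH98} does not establish a braid relation $E_iE_jE_i\cong E_jE_iE_j$ for the reflection functors; it proves that each $E_i$ is an equivalence with $E_i^2\cong\id$. The remark following Theorem~\ref{intro-thm-iji-jij} in the paper makes clear that part (ii) was previously known only for $\la=0$ (by \cite{BK}), so the ``all weights nonzero'' case is part of what is being proved here, not something you may import. (Note also that $\la_i,\la_j\neq 0$ does not prevent reflected entries such as $(r_i\la)_j=\la_i+\la_j$ from vanishing, so even delimiting this case correctly forces you into your ``mixed'' situation.) Second, when $\la_i=\la_j=0$ the ideals $I_i,I_j$ live in $\Pi^\la(Q)$, not in $\Pi^0(Q)$, since the other coordinates of $\la$ may be nonzero; the tilting-ideal relations of \cite{Iyama-Reiten,BIRS} are proved for completed (or $2$-Calabi-Yau, hence non-Dynkin) situations, whereas Theorem~\ref{intro-thm-iji-jij} assumes nothing about $Q$ beyond finiteness and loop-freeness of $i,j$. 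Even granting an ideal identity $I_iI_jI_i=I_jI_iI_j$, converting it into an isomorphism of tensor functors requires the Tor-vanishing statements identifying $I_i\otimes_A I_j$ with $I_iI_j$, which you do not address.

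Most importantly, the case you yourself identify as ``the main obstacle'' --- the mixed vanishing pattern --- is exactly where the mathematical content lies, and your proposal only promises to ``construct the isomorphism directly from the explicit cokernel presentation \dots\ verifying that the cokernels appearing on both sides match.'' That verification is the proof: the paper carries it out in Theorem~\ref{thm-iji-jij} by splicing the defining exact sequences for $C_j$, $C_{ij}$, $C_{jij}$ (and symmetrically), reducing them with Lemma~\ref{lem-ses-reduce}, identifying the two cokernels $C_{iji}(V)_i=C_{jij}(V)_i$ and $C_{iji}(V)_j=C_{jij}(V)_j$, and then checking compatibility of the $\bar Q$-action (the identities $\ep_2=\delta_1$ and $\ep_2'=-e_1'$) and functoriality in $V$. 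Notably, that computation is uniform in $\la$ and needs no case split at all, so the case analysis you build the argument around is both unnecessary and, as it stands, unsupported where it matters. As written, your proposal establishes the $K$-from-$C$ reduction and the no-arrow case, but not part (ii).
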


We remark that part (ii) of the theorem is known in the case where $\la=0$ by \cite{BK}. (It can also can be shown by using relations on tilting ideals as in \cite{BIRS}.) On the other hand, our statement (ii) does not assume that $\la=0$.

Let $A$ be a $K$-algebra. Given a left $A$-module $M$, one writes $\add M$ for the full subcategory of $\Mod A$ consisting of the modules isomorphic to a direct summand of a finite direct sum of copies of $M$, so $\add A$ is the category of finitely generated projective left $A$-modules.
Let $n\ge 1$. Recall that an $A$-module $T$ is an \emph{$n$-tilting $A$-module} if it satisfies the following three conditions.
\begin{enumerate}
    \item There is an exact sequence $0 \to P_n \to \dots \to P_1 \to P_0 \to T \to 0$ with $P_0, P_1,\dots,P_n\in\add A$.
    \item $\Ext^i_A(T,T)=0$ for all $i>0$.
    \item There is an exact sequence $0 \to A \to T_0 \to T_1 \to \dots \to T_n \to 0$ with $T_0, T_1,\dots,T_n\in\add T$.
\end{enumerate}
In this paper, by a \emph{tilting module} we always mean a 1-tilting module. 
Note that by (1), any tilting module is finitely generated. One says that $T$ is \emph{partial tilting} if $T$ satisfies (1) (with $n=1$) and (2). One says that an ideal $I$ of $A$ is a \emph{tilting ideal} if $I$ is a tilting module as both left and right $A$-modules. 

Tilting ideals for $2$-Calabi-Yau algebras were studied in \cite{BIRS, Iyama-Reiten} under the assumption that the algebras are complete. In this paper we study such ideals for arbitrary $2$-Calabi-Yau algebras.

For an $A$-module $M$, let $\Ann_A(M)=\{a\in A \mid aM=0\}$ be the \emph{annihilator ideal} of $M$ in $A$. We say that a simple $A$-module $S$ is \emph{rigid} if $\Ext^1_A(S,S)=0$. 
Given a finite-dimensional rigid simple $A$-module $S$, we write $I_S$ for its annihilator ideal in $A$. In section~\ref{sec-tilt-prelim} we prove the following.

\begin{pro}
\label{pro-ial-props}
If $A$ is 2-Calabi-Yau and $S$ is a finite-dimensional rigid simple $A$-module, then $I_S$ is a tilting ideal in $A$, it has finite codimension in $A$, and there is an isomorphism $\End_A(I_S)\cong A^{op}$, under which $a\in A$ corresponds to the homothety of right multiplication by $a$.
\end{pro}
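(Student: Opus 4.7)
The plan is to exploit the short exact sequence $0 \to I_S \to A \to A/I_S \to 0$ systematically, together with the 2-Calabi-Yau Serre duality. For a finite-dimensional $A$-module $M$, this duality reads $\Ext^i_A(M, A) \cong \kD\Ext^{2-i}_A(A, M)$; since $\Hom_A(A, M) = M$ and $\Ext^{>0}_A(A, M) = 0$, it collapses to $\Ext^i_A(M, A) = 0$ for $i < 2$ and $\Ext^2_A(M, A) \cong \kD M$. First I would observe that $A/I_S$ embeds into $\End_K(S)$, so it has finite codimension in $A$, and by Jacobson density $A/I_S$ is a simple Artinian algebra whose unique simple module is $S$; hence $A/I_S \cong S^{\oplus m}$ as a left $A$-module, and in particular $\Ext^1_A(A/I_S, A/I_S) = 0$ by rigidity. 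Homological smoothness together with finite-dimensionality forces $\projdim_A(A/I_S) \le 2$, so $I_S$ is a finitely generated first syzygy with $\projdim_A(I_S) \le 1$, establishing condition $(1)$ of the tilting axioms.

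For the isomorphism $\End_A(I_S) \cong A^{op}$, I would apply $\Hom_A(-, A)$ to the short exact sequence: the vanishings $\Hom_A(A/I_S, A) = 0$ and $\Ext^1_A(A/I_S, A) = 0$ yield an isomorphism $A \isoto \Hom_A(I_S, A)$ sending $a$ to the homothety of right multiplication $x \mapsto xa$. Applying $\Hom_A(-, S)$ instead, rigidity of $S$ gives $\Ext^1_A(A/I_S, S) = 0$, while the map $\Hom_A(A, S) \to \Hom_A(I_S, S)$ vanishes because $I_S$ annihilates $S$; hence $\Hom_A(I_S, S) = 0$ and thus $\Hom_A(I_S, A/I_S) = 0$. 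Applying $\Hom_A(I_S, -)$ to the original sequence then identifies $\End_A(I_S)$ with $\Hom_A(I_S, A) \cong A$, and because right multiplication satisfies $R_{ab} = R_b R_a$ the induced ring isomorphism is with $A^{op}$. For condition $(2)$, the long exact sequence gives $\Ext^1_A(I_S, I_S) \cong \Ext^2_A(A/I_S, I_S)$, and a further application of $\Hom_A(A/I_S, -)$ combined with rigidity and $\projdim_A(A/I_S) \le 2$ produces an exact sequence $0 \to \Ext^2_A(A/I_S, I_S) \to \Ext^2_A(A/I_S, A) \to \Ext^2_A(A/I_S, A/I_S) \to 0$. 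The finite-dimensional form of the CY duality identifies the middle map with the $\kD$-dual of the canonical isomorphism $\End_A(A/I_S) \isoto \Hom_A(A, A/I_S)$, so it is bijective and $\Ext^2_A(A/I_S, I_S) = 0$.

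Condition $(3)$, the existence of an $\add I_S$-coresolution of $A$, is the conceptually delicate part and I would handle it via derived Morita theory. Since $I_S$ is perfect and satisfies $\Ext^{>0}_A(I_S, I_S) = 0$ with $\End_A(I_S) \cong A^{op}$, it is a tilting complex in $\mcD(A)$ provided it generates the whole derived category as a thick subcategory. To verify this: if $\RHom_A(I_S, X) = 0$, the same long exact sequence forces $X \cong \Hom_A(A/I_S, X)$ to be annihilated by $I_S$, hence a direct sum of copies of $S$; then $\Ext^1_A(I_S, X) \cong \Ext^2_A(A/I_S, X)$, and the 2-CY duality $\Ext^2_A(S, S) \cong \kD \End_A(S) \neq 0$ forces $X = 0$. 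By Rickard's theorem, $I_S \Ltens_A -$ is an equivalence $\mcD(A^{op}) \to \mcD(A)$ with inverse $\RHom_A(I_S, -)$. Computing $\RHom_A(I_S, A)$ from a projective resolution $0 \to P_2 \to P_1 \to I_S \to 0$ produces a two-term complex $[Q_1 \to Q_2]$ of finitely generated projective right $A$-modules concentrated in degrees $[0, 1]$, and tensoring with $I_S$ over $A$ yields a two-term complex $[T_0 \to T_1]$ with $T_0, T_1 \in \add I_S$ that is quasi-isomorphic to $A$; reading off cohomology supplies the exact sequence $0 \to A \to T_0 \to T_1 \to 0$. The main obstacle is this last derived-generation argument together with the passage from the quasi-isomorphism back to a genuine short exact sequence of modules, which is where the rigidity hypothesis on $S$ is used most essentially.
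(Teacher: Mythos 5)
Your first two paragraphs essentially reproduce the paper's own argument (Proposition \ref{pro-I-nCY} specialised to $d=2$): finite codimension via $A/I_S\hookrightarrow\End_K(S)$, perfectness and $\projdim_A I_S\le 1$ via homological smoothness and Schanuel, $\Hom_A(I_S,A/I_S)=0$ and $\Hom_A(I_S,A)\cong A$ giving $\End_A(I_S)\cong A^{op}$ by right multiplication, and the vanishing of $\Ext^1_A(I_S,I_S)$. These steps are correct; your duality argument for condition (2) needs the naturality of the Calabi-Yau pairing in the second variable, which Proposition \ref{pro-2-CY-preproj}(a) supplies, though it is quicker to note $\kD\Ext^1_A(I_S,I_S)\cong\kD\Ext^2_A(A/I_S,I_S)\cong\Hom_A(I_S,A/I_S)=0$, as the paper does.

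The genuine gap is in your treatment of condition (3). Setting aside the smaller points (your generation test is only run on modules, not on arbitrary complexes, and passing from ``tilting complex with endomorphism ring $A^{op}$'' to ``$I_S\Ltens_A-$ and $\RHom_A(I_S,-)$ are mutually inverse equivalences'' needs the two-sided tilting complex formalism), the last step does not work as written. Computing $\RHom_A(I_S,A)$ from a resolution $0\to P_2\to P_1\to I_S\to 0$ of the \emph{left} module $I_S$ produces a complex whose terms $\Hom_A(P_i,A)$ are finitely generated projective \emph{right} $A$-modules; the left $A$-structure on $\RHom_A(I_S,A)$ comes from right multiplication on the bimodule $I_S$ and is not visible termwise on this representative, so you cannot apply $I_S\otimes_A-$ term by term, and tensoring on the other side instead gives right modules together with an evaluation map whose bijectivity is exactly the right-module tilting property you have not established. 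To make your argument run you would need to represent $\RHom_A(I_S,A)$ by a two-term complex of finitely generated projective left $A$-modules in degrees $[0,1]$ --- note $H^1=\Ext^1_A(I_S,A)\cong\Ext^2_A(A/I_S,A)\cong\kD(A/I_S)\neq 0$, so it is genuinely two-term --- and the existence of such a representative is essentially equivalent to the coresolution you are trying to construct, so as written the crucial point is asserted rather than proved. You also never address the other half of the definition of a tilting ideal, namely that $I_S$ is a tilting \emph{right} module (the paper gets this by symmetry, since $\kD(S)$ is a rigid simple right module with the same annihilator and $A^{op}$ is again 2-Calabi-Yau). The paper's proof of (3) bypasses all of this: take a finitely generated projective resolution $0\to P_1\to P_0\to I_S\to 0$ of $I_S$ as a right module and apply $\Hom_{A^{op}}(-,I_S)$; using the right-module analogues $\End_{A^{op}}(I_S)\cong A$ and $\Ext^1_{A^{op}}(I_S,I_S)=0$, together with $\Hom_{A^{op}}(P_i,I_S)\in\add({}_AI_S)$, this immediately yields $0\to A\to T_0\to T_1\to 0$ with $T_i\in\add({}_AI_S)$. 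I recommend replacing your third paragraph by this dualisation argument, or by a complete two-sided tilting argument in the style of \cite[Lemma III.1.2]{BIRS}.
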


Let $\mcS$ be a set of finite-dimensional rigid simple $A$-modules. In the category of finite-dimensional $A$-modules, we write $\mcE(\mcS)$ for the Serre subcategory generated by $\mcS$, so $\mcE(\mcS)$ consists of the finite-dimensional modules whose composition factors belong to $\mcS$. For a finite sequence $S_1,S_2,\dots,S_r$ of modules in $\mcS$, we consider the ideal $I_{S_1 S_2\dots S_r}=I_{S_1}I_{S_2}\cdots I_{S_r}$ in $A$. For the empty sequence we define $I_{\emptyset}=A$. We denote by $\mcI(\mcS)$ the set of all ideals of this form. The following result, proved in section~\ref{sec-tilt-ideals}, is an analogue of \cite[Theorem III.1.6]{BIRS}.

\begin{thm}
\label{thm-set-i-equiv}
Suppose that $A$ is 2-Calabi-Yau. Any element $I \in \mcI(\mcS)$ is a tilting ideal with $A/I\in\mcE(\mcS)$ and $\End_A(I)=A$. Conversely any partial tilting left ideal $I$ in $A$ with $A/I\in \mcE(\mcS)$ is in $\mcI(\mcS)$. If $I,I'\in \mcI(\mcS)$ are isomorphic as left modules, they are equal.
\end{thm}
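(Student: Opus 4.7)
The plan is to handle the three assertions of the theorem in turn.

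For the first assertion, I proceed by induction on $r$, the length of the sequence $S_1,\dots,S_r$. The case $r=0$ is trivial and $r=1$ is Proposition~\ref{pro-ial-props}. For the inductive step, set $J=I_{S_1\cdots S_{r-1}}$ so that $I=JI_{S_r}$, and apply the inductive hypothesis to $J$. The main technical input is that the product of two tilting ideals with endomorphism ring $A$ is again a tilting ideal with endomorphism ring $A$; over a 2-Calabi-Yau algebra this reflects the fact that such ideals correspond to invertible two-sided tilting bimodules (equivalently, derived autoequivalences of $\Mod A$) which compose. Concretely one shows that $\Tor_i^A(J,I_{S_r})=0$ for $i\ge 1$, so that multiplication gives $J\otimes_A I_{S_r}\isoto JI_{S_r}$ and projective resolutions of the factors splice into one for $JI_{S_r}$. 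For the quotient, apply $-\otimes_A I_{S_r}$ to $0\to J\to A\to A/J\to 0$ to obtain $I_{S_r}/JI_{S_r}\cong (A/J)\otimes_A I_{S_r}$, which is naturally a left $A/J$-module; since the simple $A/J$-modules are exactly the composition factors of $A/J$, and $A/J\in\mcE(\mcS)$ by induction, this tensor product lies in $\mcE(\mcS)$. Combining with $A/I_{S_r}\in\mcE(\mcS)$ via the short exact sequence
\[
0\to I_{S_r}/JI_{S_r}\to A/JI_{S_r}\to A/I_{S_r}\to 0
\]
gives $A/I\in\mcE(\mcS)$.

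For the converse, let $I$ be a partial tilting left ideal of $A$ with $A/I\in\mcE(\mcS)$. I induct on $\ell(A/I)$; the base case $\ell(A/I)=0$ gives $I=A=I_{\emptyset}\in\mcI(\mcS)$. Otherwise choose a simple submodule $S\subseteq A/I$, which necessarily lies in $\mcS$. The plan is to exhibit a factorization $I=I_S\cdot J$ for some partial tilting left ideal $J$ of $A$ with $A/J\in\mcE(\mcS)$ and $\ell(A/J)<\ell(A/I)$; induction then yields $J\in\mcI(\mcS)$ (in particular $J$ is two-sided), whence $I=I_SJ\in\mcI(\mcS)$. To construct $J$, exploit that $I_S$ is a tilting ideal with $\End_A(I_S)=A$ (Proposition~\ref{pro-ial-props}), so that $I_S\otimes_A-$ is a derived autoequivalence of $\Mod A$: set $J=\{a\in A\mid I_S\cdot a\subseteq I\}$, which may be identified with $\Hom_A(I_S,I)$ inside $\Hom_A(I_S,A)\cong A$ (the latter isomorphism coming from Calabi-Yau duality applied to $0\to I_S\to A\to A/I_S\to 0$, which forces $\Ext^i_A(A/I_S,A)=0$ for $i<2$). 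One then checks that $J$ is a left ideal, that the evaluation map $I_S\otimes_A J\to I$ is surjective with image $I$, that $J$ inherits the partial tilting property from $I$ via the autoequivalence, and that the presence of $S$ as a submodule of $A/I$ strictly reduces the length when passing to $J$.

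For the uniqueness, let $\phi\colon I\isoto I'$ be an isomorphism of left $A$-modules with $I,I'\in\mcI(\mcS)$. Composition with the inclusion $I'\injto A$ gives a left $A$-linear map $\psi\colon I\to A$. Calabi-Yau duality applied to $0\to I\to A\to A/I\to 0$ gives $\Ext^i_A(A/I,A)=0$ for $i<2$, whence $\Hom_A(I,A)\cong A$ as a right $A$-module, generated by the inclusion $I\injto A$. Therefore $\psi(x)=xa$ for some $a\in A$, so $I'=\psi(I)=Ia$. Since $I$ is two-sided by the first assertion, $Ia\subseteq I$, hence $I'\subseteq I$; the symmetric argument applied to $\phi^{-1}$ yields the reverse inclusion, so $I=I'$.

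The principal obstacle is the factorization step in the converse direction: one must transfer the partial tilting conditions from $I$ to $J=\Hom_A(I_S,I)$ via the autoequivalence $I_S\otimes_A-$, verify the equality $I_S\cdot J=I$ (rather than mere inclusion), and show that the ``removal'' of the layer corresponding to $S$ strictly reduces the length of the quotient. Once this is settled, the rest of the argument is routine assembly.
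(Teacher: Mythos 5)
Your skeleton coincides with the paper's (induction on $r$ for the first assertion; induction on the length of $A/I$ with the transporter ideal $J=\{a\in A\mid I_Sa\subseteq I\}\cong\Hom_A(I_S,I)$ for the converse; and the lifting argument for uniqueness, which you give correctly and exactly as in the paper), but two essential steps are wrong or missing. In the existence direction, the inference ``$\Tor_i^A(J,I_{S_r})=0$ for $i\ge 1$, so multiplication gives $J\otimes_A I_{S_r}\isoto JI_{S_r}$'' is false: that Tor-vanishing (which does hold, since $I_{S_r}$ has projective dimension at most one) only identifies $J\otimes_A^{\mbL}I_{S_r}$ with $J\otimes_AI_{S_r}$, whereas the kernel of the multiplication map $J\otimes_AI_{S_r}\to JI_{S_r}$ is the image of $\Tor_1^A(J,A/I_{S_r})$, which need not vanish. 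Already for $J=I_{S_r}=I_S$ one has $\Tor_1^A(I_S,A/I_S)\cong\Tor_2^A(A/I_S,A/I_S)\neq 0$ by the 2-Calabi-Yau property (compare Proposition~\ref{pro-I-S-tensor}(a)), so $I_S\otimes_AI_S\not\cong I_SI_S=I_S$ even though $\Tor_{\geq 1}^A(I_S,I_S)=0$ --- and repetitions do occur in $\mcI(\mcS)$. What rescues the step is the dichotomy of Lemma~\ref{lem-SY-Tor}: either $\Tor_1^A(J,S_r)=0$ and your argument works, or $J\otimes_AS_r=0$, in which case $JI_{S_r}=J$ and there is nothing to prove; this is exactly the content of Proposition~\ref{pro-IT-tilting}, which the paper invokes. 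Relatedly, peeling only the right-hand factor gives (via the opposite algebra) the right tilting property; for the left one the paper peels $I_{S_1}$ off on the left, whereas you appeal to an unproven general principle ``products of tilting ideals with endomorphism ring $A$ are tilting ideals'', whose naive proof runs into precisely the failure above.

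In the converse direction you have essentially restated the problem: the equality $I_SJ=I$ (not just $I_SJ\subseteq I$), the partial tilting property of $J$, and the strict drop in length are the whole content of the induction step, and you defer all three as ``the principal obstacle''. The paper settles them by the chain: $\kD\Hom_A(S,A)\cong\Ext^2_A(A,S)=0$, so $\Hom_A(S,A/I)$ embeds in $\Ext^1_A(S,I)\neq 0$; by Calabi-Yau duality $\Tor_1^A(\kD(S),I)\neq 0$; hence $\kD(S)\otimes_AI=0$ by Lemma~\ref{lem-SY-Tor}, i.e.\ $I_SI=I$, and since $I\subseteq J$ this forces $I_SJ=I$; a preimage in $A$ of a nonzero element of $S\subseteq A/I$ lies in $J\setminus I$, giving the length drop; and $\Ext^1_A(I_S,I)\cong\Ext^2_A(A/I_S,I)\cong\kD\Hom_A(I,A/I_S)=0$ (using $I=I_SI$), so $J\cong\Hom_A(I_S,I)\cong\RHom_A(I_S,I)$ is partial tilting. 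Note that Lemma~\ref{lem-SY-Tor}, the key tool in both directions, never appears in your proposal; without it the proof is incomplete, even though the uniqueness part is fine.
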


\begin{pro}\label{pro-ideal-relation}
Suppose that $A$ is 2-Calabi-Yau and $S,T\in\mcS$.
\begin{enumerate}
    \item $I_S I_S=I_S$.
    \item $I_S I_T = I_T I_S$ if $\Ext^1_A(S,T)=0$.
    \item $I_S I_T I_S = I_T I_S I_T$ if $\Ext^1_A(S,T)$ is 1-dimensional as a right $\End_A(S)$-module and as a left $\End_A(T)$-module.
\end{enumerate}
\end{pro}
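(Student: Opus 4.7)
The main tool is the Koszul-type total complex
$$
0 \to I_S \otimes_A I_T \to I_S \oplus I_T \to A \to 0,
$$
arising from the tensor product $[I_S \to A] \otimes_A [I_T \to A]$ of the short complexes resolving $A/I_S$ and $A/I_T$; its homology computes $\Tor^A_\bullet(A/I_S, A/I_T)$, and in particular $H_1 = (I_S \cap I_T)/I_S I_T$. Combined with the Auslander-Reiten duality formula $\Tor^A_i(M,N) \cong \Ext^{2-i}_A(DM,N)$, valid for finite-dimensional $M, N$ over the 2-Calabi-Yau algebra $A$ (with $D$ the $K$-dual), this translates Tor-vanishing into Ext-vanishing among simples, noting that $D(A/I_X)$ is a finite direct sum of copies of $X$ as a left $A$-module.

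For (1) I take $T=S$: the first homology is $I_S/I_S^2$, and $\Tor^A_1(A/I_S, A/I_S) \cong \Ext^1_A(D(A/I_S), A/I_S)$ is a sum of copies of $\Ext^1_A(S,S) = 0$, giving $I_S^2 = I_S$. For (2) I may assume $S \ne T$ (otherwise it reduces to (1)). Then $A/(I_S+I_T)$ has composition factors forced to be simultaneously $S$ and $T$, so it vanishes; this gives $\Tor_0 = 0$; the hypothesis $\Ext^1_A(S,T) = 0$ yields $\Tor_1 = \Ext^1_A(D(A/I_S), A/I_T) = 0$; and $\Tor_2 = \Hom_A(D(A/I_S), A/I_T) = 0$ since $S \ne T$. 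The total complex is therefore exact, and $H_1 = 0$ reads $I_S \cap I_T = I_S I_T$; the symmetric argument gives $I_T I_S = I_S \cap I_T$, so the two products agree.

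For (3) necessarily $S \ne T$. The plan is to invoke the uniqueness clause of Theorem \ref{thm-set-i-equiv}: both $I_S I_T I_S$ and $I_T I_S I_T$ lie in $\mcI(\mcS)$ by construction, so it suffices to exhibit a left $A$-module isomorphism between them. I would first verify that the derived triple tensors $I_S \Ltens_A I_T \Ltens_A I_S$ and $I_T \Ltens_A I_S \Ltens_A I_T$ are concentrated in degree zero and coincide with the ordinary ideal products; the relevant Tor-vanishings follow from $\gldim A \le 2$ (giving $\Tor^A_1(I_S, I_T) = \Tor^A_3(A/I_S, A/I_T) = 0$), together with the shifted identity $\Tor^A_1(A/I_S, I_T) = \Tor^A_2(A/I_S, A/I_T) = \Hom_A(D(A/I_S), A/I_T) = 0$ (the last using $S \ne T$). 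The isomorphism of triple tensors would then be extracted from the braid relation for the auto-equivalences $F_X = I_X \Ltens_A -$ on $D^b(A)$, which act as Seidel-Thomas-style spherical twists in the 2-Calabi-Yau setting; the 1-dimensional Ext hypothesis is precisely what forces $F_S F_T F_S \cong F_T F_S F_T$.

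The hardest step will be making this braid isomorphism sufficiently explicit in the non-complete setting. A more self-contained alternative would be to identify both $I_S I_T I_S$ and $I_T I_S I_T$ as the annihilator of a common rigid module $M \in \mcE(\mcS)$ of length $3$, built as an iterated non-split extension with composition factors $S, T, S$ (equivalently $T, S, T$); the 1-dimensional Ext hypothesis is precisely what pins down such an $M$ up to isomorphism, allowing one to identify both ideals with $\Ann_A(M)$.
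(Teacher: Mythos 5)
Parts (1) and (2) of your argument are correct, and they take a genuinely different route from the paper: you use the standard identity $\Tor_1^A(A/I_S,A/I_T)\cong (I_S\cap I_T)/I_SI_T$ together with the 2-Calabi-Yau duality $\Tor_i^A(M,N)\cong \kD\Ext_A^i(N,\kD M)\cong \Ext_A^{2-i}(\kD M,N)$ and the identification of $A/I_X$ with copies of the simple, whereas the paper deduces (1)--(3) uniformly from the equivalence ``$I_{S_1\cdots S_r}M=0$ iff $M\in\add S_1\ast\cdots\ast\add S_r$'' and the corresponding equalities of subcategories (Proposition~\ref{prop-addS-relation}). Your Tor computation is clean and complete for (1) and (2) (the framing via the na\"ive total complex of $[I_S\to A]\otimes_A[I_T\to A]$ is not needed and is slightly delicate, but the facts you actually use are the standard $\Tor_1$ formula and the vanishing of $\Tor_1$, which you establish correctly, including the symmetric vanishing of $\Ext^1_A(T,S)$ via Calabi-Yau duality).

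Part (3), however, is not established. Your primary route defers the entire content to the assertion that the functors $F_X=I_X\Ltens_A-$ satisfy $F_SF_TF_S\cong F_TF_SF_T$; this braid relation is essentially equivalent to the statement being proved (it is the derived analogue of Theorem~\ref{thm-iji-jij}/the BIRS-type relation), and citing Seidel--Thomas does not settle it here: their braid relation is proved for spherical objects with $\End=K$ in specific settings, while Proposition~\ref{pro-ideal-relation}(3) is stated without the splitness hypothesis (only that $\Ext^1_A(S,T)$ is $1$-dimensional over the division algebras $\End_A(S)$ and $\End_A(T)$), and in any case one would have to verify the applicability to an arbitrary, non-complete 2-Calabi-Yau algebra --- a verification you explicitly leave open. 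Your fallback route fails outright: there is no module that is an iterated non-split extension of length three with composition factors $S,T,S$. Indeed, in the 2-Calabi-Yau situation one has $\Ext^1_A(S,E^S_T)=0$ (apply $\Hom_A(S,-)$ to $0\to T\to E^S_T\to S\to 0$ and use rigidity and one-dimensionality of $\Ext^1_A(S,T)$), so every indecomposable in $\mcE(\{S,T\})$ has length at most two --- this is exactly Lemma~\ref{lem-two-simple-serre} of the paper. Moreover, even if such a length-three module $M$ existed, identifying $I_SI_TI_S$ with $\Ann_A(M)$ for a single module would need justification; the natural common object is $A/I_{STS}$ (a typically decomposable module), and the correct mechanism is the one above: $A/I_{STS}$ lies in $\add(S)\ast\add(T)\ast\add(S)$, this category equals $\add(T)\ast\add(S)\ast\add(T)$ by the uniseriality of $\mcE(\{S,T\})$, whence $I_{TST}\subseteq I_{STS}$ and symmetrically. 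So to complete (3) you need an argument of this kind (or an actual proof of the functorial braid relation), not a citation.
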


This and the next theorem are proved in section~\ref{sec-relns}. For simplicity (to avoid valued quivers, and because it is sufficient for $\Pi^\la(Q)$) we consider the case that $\mcS$ is \emph{split}, by which we mean that $\End_A(S)=K$ for all $S\in \mcS$. In this case, the Ext-quiver $Q(\mcS)$ has as vertices the isomorphism classes of elements $S\in\mcS$, and with $\dim_K\Ext^1_A(S,T)$ arrows from $S$ to $T$. For $A$ a 2-Calabi-Yau algebra this is the double of an acyclic quiver. The associated Coxeter group $W(\mcS)$ is generated by elements $\si_S$, one for each $S\in \mcS$ up to isomorphism, subject to the relations that $\si_S^2=1$ for any $S$, $\si_S \si_T=\si_T \si_S$ if there are no arrows from $S$ to $T$ and $\si_S \si_T  \si_S=\si_T \si_S \si_T$ if there is exactly one arrow from $S$ to $T$.

\begin{thm}
\label{thm-cox-bij}
Suppose that $A$ is 2-Calabi-Yau. If $\mcS$ is a set of finite-dimensional rigid simple $A$-modules which is split, then there is a bijection $W(\mcS) \to \mcI(\mcS)$ given by $w \mapsto I_{S_1 S_2\cdots S_r}$ for a reduced expression $\si_{S_1}\si_{S_2}\cdots \si_{S_r}$ for $w$.
\end{thm}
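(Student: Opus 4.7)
The plan has three parts: well-definedness, surjectivity, and injectivity.

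Well-definedness of the map $w \mapsto I_{S_1 S_2 \cdots S_r}$ follows from Matsumoto's theorem together with Proposition~\ref{pro-ideal-relation}(2),(3): any two reduced expressions for $w \in W(\mcS)$ are connected by a sequence of braid moves, each of which leaves the ideal unchanged. For surjectivity, I first establish the Demazure-type recursion
\[
I_S I_u = \begin{cases} I_{\si_S u} & \text{if } \ell(\si_S u) > \ell(u),\\ I_u & \text{if } \ell(\si_S u) < \ell(u), \end{cases}
\]
valid for any $u\in W(\mcS)$ and any $S\in\mcS$. The first case is immediate; the second follows because $u$ admits a reduced expression starting with $\si_S$, so $I_u = I_S I_v$ for some $v$, and Proposition~\ref{pro-ideal-relation}(1) then gives $I_SI_u = I_S^2 I_v = I_S I_v = I_u$. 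Applying this recursion inductively to a product $I = I_{T_1}\cdots I_{T_n}\in\mcI(\mcS)$ yields $I = I_v$ for a suitable $v\in W(\mcS)$.

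For injectivity, the plan is to show that the class $[A/I_w]\in K_0(\mcE(\mcS))\cong\bigoplus_{S\in\mcS}\mbZ[S]$ already determines $w$. For a reduced expression $\si_{S_1}\cdots\si_{S_r}$ of $w$, the filtration
\[
A \supseteq I_{S_1} \supseteq I_{S_1}I_{S_2} \supseteq \cdots \supseteq I_{S_1\cdots S_r} = I_w
\]
has $k$-th successive quotient $I_{S_1\cdots S_{k-1}}/I_{S_1\cdots S_k}$ a nonzero finite-dimensional module annihilated on the right by $I_{S_k}$; since $\mcS$ is split and $A/I_{S_k}$ is thus a matrix algebra, this quotient is a direct sum of copies of $S_k$ as a left $A$-module. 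Under the identification of $K_0(\mcE(\mcS))$ with the root lattice $\bigoplus_{S\in\mcS}\mbZ\al_S$ via $[S]\leftrightarrow\al_S$, I expect $[I_{S_1\cdots S_{k-1}}/I_{S_1\cdots S_k}]$ to correspond (up to the positive scalar $\dim_KS_k$) to the positive real root $\beta_k := \si_{S_1}\cdots\si_{S_{k-1}}(\al_{S_k})$. The $\beta_1,\ldots,\beta_r$ form the inversion set $\Phi(w)$, so $[A/I_w]$ is a strictly positive integer combination of the roots in $\Phi(w)$; since such a sum determines $w$ via faithfulness of the reflection representation of the Coxeter group $W(\mcS)$ (Tits), $w$ is recovered from $I_w$.

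The main obstacle is establishing the identification of $[I_{S_1\cdots S_{k-1}}/I_{S_1\cdots S_k}]$ with $\dim_K S_k\cdot\beta_k$ in $K_0(\mcE(\mcS))$. The natural strategy is to build up $I_{S_1\cdots S_{k-1}}$ inductively as a tilting bimodule (combining Propositions~\ref{pro-ial-props} and \ref{pro-ideal-relation} with Theorem~\ref{thm-set-i-equiv}), and to interpret $I_{S_1\cdots S_{k-1}}/I_{S_1\cdots S_k}$ as the image in $K_0$ of the derived tensor $I_{S_1\cdots S_{k-1}}\otimes^{\mathbf{L}}_A(A/I_{S_k})$, which equals $F_{S_1}\circ\cdots\circ F_{S_{k-1}}(A/I_{S_k})$ for the derived autoequivalences $F_S:=I_S\otimes^{\mathbf{L}}_A(-)$ of $D(A)$. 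Verifying that each $F_S$ acts on $K_0(\mcE(\mcS))$ as the simple reflection $\si_S$ --- via the triangle $I_S\to A\to A/I_S\to I_S[1]$ --- and that the underived quotient has the same class in $K_0$ as the derived tensor (vanishing of the relevant higher $\Tor$-terms, enforced by the 2-Calabi-Yau property) are the main technical points.
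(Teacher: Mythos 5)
Your well-definedness argument is the same as the paper's (Matsumoto plus Proposition~\ref{pro-ideal-relation}), and your surjectivity argument via the Demazure-type recursion $I_SI_u=I_{\si_Su}$ or $I_u$ is a valid repackaging, using the exchange condition, of what the paper does with a minimal-length expression and the deletion property; both rest only on Proposition~\ref{pro-ideal-relation}(1) and well-definedness, so up to this point you are fine. The problems are in the injectivity step. First, the statement you yourself flag as ``the main obstacle'' is genuinely the crux and your hint for it is insufficient: the identification $I_{S_1}\cdots I_{S_k}\cong I_{S_1}\otimes_A^{\mbL}\cdots\otimes_A^{\mbL}I_{S_k}$ (equivalently, vanishing of the relevant $\Tor_1$ at each stage) is \emph{not} ``enforced by the 2-Calabi-Yau property''---it is false for non-reduced words, e.g.\ $I_S\otimes_A^{\mbL}I_S\not\cong I_SI_S$ since $\Tor_1^A(I_S,S)\cong S$ by Proposition~\ref{pro-I-S-tensor}(a). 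The paper proves it by induction on $k$ using Proposition~\ref{pro-IT-tilting} and Lemma~\ref{lem-SY-Tor}, where the dichotomy ``$\Tor_1=0$ or the tensor product vanishes'' is resolved by the Coxeter-theoretic positivity $w'(F(S_k))>0$ for $\ell(w'\si_{S_k})>\ell(w')$ (\cite[Proposition 4.2.5(i)]{BB05}); some such input from the combinatorics of reduced words is unavoidable, and without it your plan has a hole exactly where you put the weight of the argument.

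Second, even granting that identification, your final deduction does not follow as stated. Faithfulness of the geometric representation says that $w\mapsto(\text{the linear operator }w\text{ on }\mcZ)$ is injective; it does not say that the single vector $[A/I_w]=\sum_k(\dim_K S_k)\,\beta_k\in\mcZ$ determines $w$. Recovering $w$ from this one weighted sum over the inversion sequence would need a separate argument (the usual $\rho-w\rho$ trick is problematic here because the form on $\mcZ$ may be degenerate and the weights $\dim_KS_k$ are not all $1$), and you give none. The paper avoids this entirely: since $I_w=I_{w'}$ forces the \emph{functors} $I_w\otimes_A^{\mbL}-$ and $I_{w'}\otimes_A^{\mbL}-$ to coincide on $\sD_{\mcE(\mcS)}(\Mod A)$, Proposition~\ref{pro-I-S-tensor}(d) gives $w(F(X))=w'(F(X))$ for \emph{all} such $X$, in particular for $X=S$ with $F(S)=e_S$, so $w$ and $w'$ agree as operators on $\mcZ$ and faithfulness applies directly; your framework contains all the ingredients for this, but the step must be run on the whole action, not on the one class. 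A smaller inaccuracy: the layer $I_{S_1\cdots S_{k-1}}/I_{S_1\cdots S_k}$ is only annihilated by $I_{S_k}$ on the \emph{right}; as a left $A$-module it is generally not a direct sum of copies of $S_k$ (its class is $(\dim_KS_k)\beta_k$, which is typically not a multiple of $e_{S_k}$), so that sentence contradicts the very formula you go on to use.
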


The previous results apply in particular to the algebra $A=\Pi^\la(Q)$ where $Q$ is a connected non-Dynkin quiver. Below we give the classification of finite-dimensional rigid simple modules in this case.

Recall that the \emph{simple roots} for $Q$ are the coordinate vectors $\ep_i$ of loop-free vertices $i$. An element of $\mbZ^I$ is a \emph{real root} if it is the image of a simple root under the action of the Weyl group.
The \emph{fundamental region} $F$ is the set of vectors $\alpha$ in $\mbN^I$ such that $\alpha\neq 0$, the support of $\alpha$ is connected and $(\alpha, \ep_i)\leq 0$ for any $i\in I$. An \emph{imaginary root} is an element of $\mbZ^I$ of the form $w\beta$ or $-w\beta$ for some $w\in W$ and $\beta \in F$.
A \emph{root} is a real or imaginary root.
It is standard that any root $\alpha$ is either \emph{positive}, meaning that it belongs to  $\mbN^I$, or \emph{negative}, meaning that it belongs to $(-\mbN)^I$.
It is easy to see that $q(s_i\al)=q(\al)$ holds.
Therefore $q(\al)=1$ if $\al$ is a real root, and $q(\al)\leq 0$ if $\al$ is an imaginary root.

For $\lambda\in K^I$, we define $\Sigma_\lambda^{\rm re}$ to be the set of positive real roots $\alpha$ with $\lambda\cdot\alpha=0$ and such that there is no decomposition $\alpha=\beta+\gamma+\dots$ as a sum of two or more positive roots with $\lambda\cdot\beta=\lambda\cdot\gamma=\dots$. 
It is the intersection of the set of real roots and the set $\Sigma_\lambda$ of \cite{CB-Geo}.
The following result is clear from \cite[Theorem 1.2]{CB-Geo} in case the base field $K$ is algebraically closed (see also \cite[Theorem 2]{CB-OM}), or for general $K$ by the argument of \cite[Theorems 1.8,  1.9]{CBS-Multi}---we omit the details.

\begin{pro}
\label{pro-rig-sim}
The map sending a module to its dimension vector gives a 1:1 correspondence between the isomorphism classes of finite-dimensional rigid simple $\Pi^\lambda(Q)$-modules and the elements of $\Sigma_\lambda^{\rm re}$.
The endomorphism algebra of any finite-dimensional rigid simple module is isomorphic to $K$.
The dimension vector of any finite-dimensional non-rigid simple module is a positive imaginary root.
\end{pro}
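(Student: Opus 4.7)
The plan is to reduce to the case that $K$ is algebraically closed, where the classification of dimension vectors of finite-dimensional simple $\Pi^\lambda(Q)$-modules as the elements of $\Sigma_\lambda$ is given by \cite[Theorem 1.2]{CB-Geo} (see also \cite[Theorem 2]{CB-OM}), and to extract the rigid/non-rigid dichotomy from the Euler-Ringel identity supplied by the 2-Calabi-Yau property of $\Pi^\lambda(Q)$; the passage to arbitrary $K$ is then a Galois-descent argument following \cite[Theorems 1.8, 1.9]{CBS-Multi}.

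For the Ext computation, Theorem \ref{thm-ass-gr} gives $\Ext^2_{\Pi^\lambda(Q)}(M,M) \cong D\End(M)$ for every finite-dimensional module $M$ (with $D = \Hom_K(-,K)$), so the standard Euler identity for $\Pi^\lambda(Q)$ on finite-dimensional modules reads
\[
2\dim_K \End(M) - \dim_K \Ext^1_{\Pi^\lambda(Q)}(M,M) = (\Dim M,\Dim M) = 2q(\Dim M).
\]
For a simple $M$ with $\alpha = \Dim M$, Schur's lemma guarantees $\End(M)$ is a division $K$-algebra of dimension at least $1$. Hence if $\alpha$ is imaginary then $q(\alpha) \leq 0$ forces $\dim \Ext^1(M,M) \geq 2$, so $M$ is not rigid; and if $\alpha$ is real then $q(\alpha) = 1$ and $\dim \Ext^1(M,M) = 2(\dim_K \End(M) - 1)$, which vanishes precisely when $\End(M) = K$. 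When $K$ is algebraically closed, $\End(M) = K$ automatically, so rigidity is equivalent to $\alpha$ being a real root, and combining with the classification $\Dim M \in \Sigma_\lambda$ together with the uniqueness-for-real-roots clause of \cite[Theorem 1.2]{CB-Geo} yields the three assertions.

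For general $K$, I would apply the descent argument of \cite[Theorems 1.8, 1.9]{CBS-Multi}. Writing $G = \operatorname{Gal}(\bar K/K)$ and $\bar\Pi = \Pi^\lambda(Q) \otimes_K \bar K$, a finite-dimensional simple $M$ base-changes to a semisimple $\bar\Pi$-module $\bar M = \bigoplus_i \bar N_i^{\oplus m}$ whose summands $\bar N_i$ form a single Galois orbit with common dimension vector, so $\Dim M = mk\alpha$ where $k$ is the orbit length and $\alpha = \Dim \bar N_i$ lies in $\Sigma_\lambda$ by the algebraically closed case. Since $\Ext$ commutes with flat base change, $M$ is rigid iff $\bar M$ is rigid, in which case each $\bar N_i$ is rigid over $\bar K$, so $\alpha$ is a real root; uniqueness in \cite[Theorem 1.2]{CB-Geo} then forces $k = 1$, i.e.\ $\bar N := \bar N_1$ is Galois-stable, and the Schur-index/Brauer-obstruction analysis of \cite{CBS-Multi} shows $m = 1$ as well, so $\bar M = \bar N$ is simple, $\End_K(M) = K$, and $\Dim M = \alpha \in \Sigma_\lambda^{\mathrm{re}}$. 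Conversely, the unique rigid simple over $\bar K$ with dim vector $\alpha \in \Sigma_\lambda^{\mathrm{re}}$ descends to $K$ by the same analysis. For non-rigid simples $\Dim M = mk\alpha$ with $\alpha$ imaginary, and since positive multiples of positive imaginary roots are imaginary, the third assertion follows. The main obstacle is this Schur-index/descent step: showing $m = 1$ and that Galois-stable rigid simples over $\bar K$ descend to $K$, which is precisely the content of trivialising a Brauer-type obstruction in $H^2(G, \bar K^\times)$ as carried out in \cite[Theorems 1.8, 1.9]{CBS-Multi}.
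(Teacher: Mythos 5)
Your proposal is correct and takes essentially the same route as the paper, which itself omits the details and simply cites \cite[Theorem 1.2]{CB-Geo} for algebraically closed $K$ together with the descent argument of \cite[Theorems 1.8, 1.9]{CBS-Multi} for general $K$. Your rigidity criterion $\dim_K\Ext^1_{\Pi^\lambda(Q)}(M,M)=2\dim_K\End(M)-2q(\Dim M)$ is exactly Proposition~\ref{pro-dim-Ext} with $M=N$, and the remaining points you defer to \cite{CBS-Multi} (the Schur-index/descent analysis over non-closed fields, including the step that rules out a real root $\alpha$ when $M$ is non-rigid) are precisely the details the paper also leaves to that reference.
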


We can say more in case $Q$ is an extended Dynkin quiver, see \cite[Theorem III.1.6]{BIRS}. Let $\mcR$ be the set of all finite-dimensional rigid simple $\Pi^\la(Q)$-modules. The last two theorems are proved in section~\ref{sec-ext-dynkin}.

\begin{thm}
\label{thm-all-cof}
If $Q$ is an extended Dynkin quiver and $A = \Pi^\la(Q)$, then there are only finitely many isomorphism classes of finite-dimensional rigid simple $A$-modules, and $\mcI(\mcR)$ is the set of all tilting ideals of finite codimension in $\Pi^\la(Q)$.
\end{thm}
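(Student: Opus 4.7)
The plan is to establish the two assertions of the theorem separately: the finiteness of $\mcR$, and the identification of $\mcI(\mcR)$ with the set of cofinite tilting ideals.

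For the finiteness of $\mcR$, by Proposition~\ref{pro-rig-sim} it suffices to show $\Sigma_\lambda^{\rm re}$ is finite. For $Q$ extended Dynkin with null root $\delta$, every positive real root has the standard form $\beta + n\delta$ with $\beta$ a root of the underlying finite-type root system and $n \in \mbZ_{\geq 0}$ (subject to the appropriate positivity constraint when $\beta$ is negative). If $\lambda \cdot \delta \neq 0$, the equation $\lambda \cdot (\beta + n\delta) = 0$ pins $n$ down uniquely in terms of $\beta$, leaving only finitely many candidates. If $\lambda \cdot \delta = 0$, then any such $\alpha = \beta + n\delta$ with $n$ strictly greater than the minimal value $n_0$ giving a positive root admits a nontrivial decomposition $\alpha = (\beta + n_0\delta) + (n - n_0)\delta$ into two or more positive roots each killed by $\lambda$, contradicting membership in $\Sigma_\lambda^{\rm re}$. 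Hence $\Sigma_\lambda^{\rm re}$ injects into a finite set indexed by the finite-type root subsystem.

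For the second assertion, Theorem~\ref{thm-set-i-equiv} reduces the problem to showing $A/I \in \mcE(\mcR)$ for any tilting ideal $I$ of finite codimension in $A$. I would exploit that for $Q$ extended Dynkin, $A = \Pi^\la(Q)$ is finitely generated as a module over its centre $Z$, a commutative Noetherian $K$-algebra. Since $A/I$ is finite-dimensional, its $Z$-support consists of finitely many maximal ideals $\m_1, \dots, \m_k$, and $A/I \cong \bigoplus_i (A/I)_{\m_i}$ decomposes by localization. Passing to the completion $\wh{A}_i := A \otimes_Z \wh{Z}_{\m_i}$ at each $\m_i$ produces a complete 2-Calabi-Yau algebra in which $\wh{I}_i := I\wh{A}_i$ is a cofinite tilting ideal. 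Applying the complete-algebra classification \cite[Theorem III.1.6]{BIRS}, the ideal $\wh{I}_i$ is a product of annihilators of rigid simple $\wh{A}_i$-modules, and these correspond bijectively to rigid simple $A$-modules supported at $\m_i$. Consequently every composition factor of each $(A/I)_{\m_i}$, hence of $A/I$, lies in $\mcR$, as required.

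The main obstacle will be making this completion step fully rigorous. Specifically, one needs to verify: that $\wh{A}_i$ is genuinely 2-Calabi-Yau (via flat base change applied to the bimodule projective resolution provided by the proof of Theorem~\ref{thm-ass-gr}); that the tilting and cofiniteness conditions pass in both directions between $A$ and $\wh{A}_i$ (relying on $A$ being finite over $Z$); and that finite-dimensional rigid simple $A$-modules supported at $\m_i$ correspond precisely to rigid simple $\wh{A}_i$-modules (via the equivalence $\mod(A/\m_i^N A) \simeq \mod(\wh{A}_i / \m_i^N \wh{A}_i)$ together with Ext-compatibility). Once these technical bookkeeping points are settled, the result follows cleanly from the complete case of \cite{BIRS}.
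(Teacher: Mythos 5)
Your first half (finiteness of $\mcR$) is fine and is essentially the paper's argument: each coset $\alpha+\mbZ\delta$ of the finitely many $\delta$-orbits of roots can contain at most one element of $\Sigma_\lambda^{\rm re}$, since $\beta=\alpha+m\delta$ with both terms killed by $\la$ would give a forbidden decomposition. The second half, however, has genuine gaps. First, the assertion that $A=\Pi^\la(Q)$ is a finite module over its centre is false for general $\la$: by \cite{CBH98} this holds for extended Dynkin $Q$ only when $\la\cdot\delta=0$ (for $\la\cdot\delta\neq 0$ one gets Weyl-algebra-like behaviour with small centre), while the theorem is stated for arbitrary $\la$ and is non-vacuous there, since $\Sigma_\la^{\rm re}$ can be nonempty. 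Second, and more seriously, \cite[Theorem III.1.6]{BIRS} is a statement about the completed preprojective algebra of a non-Dynkin quiver and its vertex ideals $I_i$; it is not a classification of cofinite tilting ideals in an arbitrary complete 2-Calabi-Yau algebra, so it cannot simply be ``applied'' to $\wh{A}_i$. To invoke it you would need a formal local structure theorem identifying $\wh{A}_i$ with (something Morita equivalent to) a completed preprojective algebra of the local Ext-quiver, and precisely at the points you must worry about---those where the supporting semisimple module involves a non-rigid simple, whose dimension vector is a multiple of $\delta$---that local quiver has loops and is not of the form covered by \cite{BIRS}. So the completion step defers, rather than settles, the crux of the theorem: excluding non-rigid composition factors of $A/I$.

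The paper handles exactly this point directly, without completions. Using Proposition~\ref{pro-dim-Ext} and the fact that $(\alpha,\delta)=0$ for every $\alpha$, one sees there are no extensions between rigid simples and non-rigid simples, so $A/I\cong Y\oplus Z$ with $\Dim Y$ a multiple of $\delta$ and $Z\in\mcE(\mcR)$; if $Y\neq 0$ then $\Ext^1_A(Y,Y)\neq 0$ (again by Proposition~\ref{pro-dim-Ext}), hence $\Ext^1_A(A/I,A/I)\neq 0$, whereas a short diagram chase using $\Ext^1_A(I,I)=0$ and the 2-Calabi-Yau vanishing $\Ext^1_A(A/I,A)=0$ forces $\Ext^1_A(A/I,A/I)=0$, a contradiction. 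Then Theorem~\ref{thm-set-i-equiv} finishes, as in your reduction. If you want to salvage your route, you would have to prove the 2-Calabi-Yau property and the BIRS-type classification for the relevant completions, including at non-rigid points, which is substantially harder than the direct argument above.
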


In order to understand the structure of $Q(\mcR)$ we have the following result. Here $\delta$ is the minimal positive imaginary root for an extended Dynkin quiver $Q$. The notation is as in \cite{CBH98}.

\begin{thm}\label{thm-sing-comp-a}
If $Q$ is extended Dynkin quiver and $\la\cdot\delta=0$, then $Q(\mcR)$ is the double of a disjoint union of extended Dynkin quivers. If in addition $K$ is an algebraically closed field of characteristic zero, then there is a bijection between the connected components of $Q(\mcR)$ and the singular points of the affine quotient variety $\Rep(\Pi^\la(Q), \delta) \dbslash \GL(\delta)$.
\end{thm}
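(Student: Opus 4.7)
The plan has two parts. First I would show that $Q(\mcR)$ is the double of a disjoint union of extended Dynkin quivers by a quadratic-form argument, and then I would establish the bijection with singular points via Luna's slice theorem.

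For Part 1, the first observation is that by Theorem \ref{thm-ass-gr}, $\Pi^\la(Q)$ is 2-Calabi-Yau, so there is an isomorphism $\Ext^1(S,T)\cong D\Ext^1(T,S)$ for any $S,T\in\mcR$. Thus $Q(\mcR)$ has equal numbers of arrows in both directions between each pair of vertices and is the double of some quiver $Q'$, which has no loops since rigid simples satisfy $\Ext^1(S,S)=0$. Next, the 2-CY Euler characteristic $\dim\Hom-\dim\Ext^1+\dim\Ext^2=(\Dim\cdot,\Dim\cdot)_Q$ combined with the vanishing of $\Hom$ between distinct simples and $q_Q(\Dim S)=1$ (Proposition \ref{pro-rig-sim}), yields $\dim\Ext^1(S,T)=-(\Dim S,\Dim T)_Q$ for distinct $S,T\in\mcR$; a direct calculation then gives the key identity
\[
q_{Q'}(n)\;=\;q_Q\!\Bigl(\textstyle\sum_{S\in\mcR}n_S\,\Dim S\Bigr), \qquad n\in\mbZ^\mcR.
\]
Since $Q$ is extended Dynkin, $q_Q$ is positive semidefinite with radical $\mbQ\delta$, so $q_{Q'}$ is positive semidefinite and each connected component of $Q'$ is of Dynkin or extended Dynkin type.

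To exclude Dynkin components and pin down the extended Dynkin structure, I would exhibit, for each connected component $\mcR_0$ of $Q(\mcR)$, a positive integer vector $n^{(0)}=(n^{(0)}_S)_{S\in\mcR_0}$ with $\sum_{S\in\mcR_0}n^{(0)}_S\,\Dim S=\delta$. Then $q_{Q'}(n^{(0)})=q_Q(\delta)=0$, and a positive null vector supported on the full connected component forces that component to be extended Dynkin with $n^{(0)}$ a positive multiple of the minimal imaginary root. The construction of $n^{(0)}$ relies on Crawley-Boevey's canonical decomposition of $\delta$ relative to $\la$ (cf.\ \cite{CB-Geo}): a rigid simple $S\in\mcR_0$ appears in some canonical decomposition $\delta=\sum n_T\,\Dim T$ of $\delta$ into roots in $\Sigma_\la$, and the local quiver argument of Part 2 (via a dimension count) forces the simple summands of such a decomposition to span a connected subquiver of $Q(\mcR)$, hence to lie in $\mcR_0$.

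For Part 2, assume $K$ is algebraically closed of characteristic zero. Points of $X^\la:=\Rep(\Pi^\la(Q),\delta)\dbslash\GL(\delta)$ correspond to isomorphism classes of semisimple $\Pi^\la(Q)$-modules $M=\bigoplus_i S_i^{n_i}$ of dimension $\delta$. By Luna's slice theorem, the \'etale-local structure of $X^\la$ at $M$ is isomorphic to that, at the origin, of $\Rep(\Pi^0(Q_M^+),(n_i))\dbslash\prod\GL(n_i)$, where $Q_M^+$ is any orientation of the Ext-quiver on the summands of $M$; this identification uses the 2-CY pairing $\Ext^1(M,M)^{\otimes 2}\to\Ext^2(M,M)\cong\End(M)$ to realize the moment map of the preprojective algebra. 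A standard dimension count yields
\[
\dim X^\la_{\mathrm{loc}}\;=\;2-2\,q_{Q_M^+}\!\bigl((n_i)\bigr),
\]
and since $X^\la$ has pure dimension $2$ this forces $q_{Q_M^+}((n_i))=0$. Combined with Part 1, the support of $(n_i)$ must be exactly a connected component of $Q(\mcR)$ (when every $S_i$ is a rigid simple) or a single vertex with one loop (when $M$ is a non-rigid simple of dimension $\delta$, whose Jordan local model gives the smooth affine plane). The point $M$ is singular precisely in the first case, yielding the bijection $M\mapsto\mcR_0$ whose inverse is $\mcR_0\mapsto M_{\mcR_0}:=\bigoplus_{S\in\mcR_0}S^{n^{(0)}_S}$.

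The main obstacle is the combinatorial construction of $n^{(0)}$ in Part 1 together with verifying that distinct components correspond to distinct decompositions; this requires delicate use of Crawley-Boevey's existence theorem for simple modules and the fact that a connected subquiver of an extended Dynkin quiver carrying a positive null root must be the whole quiver (ruling out proper subsets, which would be of finite Dynkin type and so admit no null root). A secondary technical point is the Luna-slice identification in Part 2, where one must check that the local moment map coincides with the preprojective moment map for $Q_M^+$; this may be done by direct computation or by adapting the arguments of \cite{BIRS} and \cite{CB-Geo} to the non-complete 2-CY setting supplied by Theorem \ref{thm-ass-gr}.
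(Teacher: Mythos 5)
Your Part 1 starts the same way as the paper (the identity $q_{Q'}(n)=q_Q(\sum_S n_S\Dim S)$ and positive semidefiniteness; cf.\ Proposition \ref{pro-comp-ext}(a)), but the step that rules out Dynkin components has a real gap. You want, for each component $\mcR_0$, a strictly positive null vector $n^{(0)}$ with $\sum_{S\in\mcR_0}n^{(0)}_S\Dim S=\delta$, and you justify its existence by (i) appealing to ``some canonical decomposition'' of $\delta$ containing a prescribed $S$ --- but the canonical decomposition of \cite{CB-Geo} is unique, so what you actually need is \emph{some} decomposition with $S$ as a summand, which you still have to produce --- and (ii) invoking ``the local quiver argument of Part 2'' to force all summands into a single component. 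Point (ii) is both unproven (see below) and circular in spirit: controlling the support of a decomposition of $\delta$ inside $\Ga$ is precisely what requires already knowing the components are extended Dynkin (this is the content of Lemmas \ref{lem-qQ-qGa} and \ref{lem-del-del}, proved \emph{after} Proposition \ref{pro-comp-ext}); a dimension count does not force connectedness of the support. The paper's fix is short and avoids all of this: for a rigid simple $S_{\al_j}$, the vector $\delta-\al_j$ is a positive real root with $\la\cdot(\delta-\al_j)=0$; decomposing it into positive roots annihilated by $\la$ with a maximal number of summands puts every summand in $\Sigma_\la^{\rm re}$, so $\delta=\sum_i d_i\al_i$ with $d_j>0$. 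Then $q_{Q'}(d)=q_Q(\delta)=0$, and since $q_{Q'}$ is the orthogonal sum of the components' forms, the component containing $j$ carries a nonzero nonnegative null vector and so cannot be Dynkin. No full-support or connectedness claim is needed.

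Part 2 is a genuinely different route from the paper, but as written its core is missing: the Luna-slice/\'etale-local description of $\Rep(\Pi^\la(Q),\delta)\dbslash\GL(\delta)$ at a semisimple module by the preprojective algebra of its Ext-quiver, the identification of the slice moment map with the preprojective relation, and the dimension formula $2-2q_{Q_M^+}$ are all asserted and explicitly deferred. None of that machinery is needed: the paper simply quotes \cite[Theorem 3.2]{B-02}, by which the smooth points of the quotient are exactly the classes of simple modules; hence the singular points are exactly the non-simple semisimple modules of dimension $\delta$, and all their composition factors are automatically rigid (a non-rigid simple factor would have imaginary dimension vector, hence a multiple of $\delta$, forcing the module to be simple). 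The bijection then reduces to the combinatorial Lemma \ref{lem-del-del}: any decomposition $\delta=\sum_i d_i\al_i$ with $d_i\ge 0$ is supported on exactly one component of $\Ga$, with multiplicities equal to that component's minimal imaginary root $\delta'$. That lemma also supplies the uniqueness your inverse map needs (your $n^{(0)}$ must equal $\delta'$ exactly, not merely a positive multiple, by minimality of $\delta$), a point you leave open. If you insist on the Luna-slice approach, you must actually prove the local-model statement and the smooth/singular dichotomy for the local models; as it stands, both the well-definedness of your map $M\mapsto\mcR_0$ and the claim that each component yields exactly one singular point rest on unestablished machinery.
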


\section{The PBW and 2-Calabi-Yau properties}\label{section-calabi-yau}
\subsection{Filtrations and the PBW property}
Let $A$ be a $K$-algebra.
A \emph{filtration} of $A$ is a family $\mcF=\{A_{\le i}\}_{i \geq 0}$ ($i\in\mbZ$) of $K$-subspaces $A_{\le i}$ of $A$ satisfying $A=\bigcup_{i\geq 0}A_{\le i}$, $A_{\le i}\subseteq A_{\le i+1}$ and $A_{\le i} A_{\le j} \subseteq A_{\le i+j}$ for any $i,j\geq 0$.
If $A$ admits a filtration, then we say that $A=\bigcup_{i\geq 0}A_{\leq i}$ is a \emph{filtered $K$-algebra}.
In this case, the associated graded $K$-algebra $\mathsf{gr}A$ is defined as follows:
\[
\gr_{\mcF}A = \gr A:=\bigoplus_{i\geq 0}A_{\le i}/A_{\le i-1},
\]
where $A_{\le -1}=0$.
The algebra $\gr A$ is a ($\mbN$-) graded $K$-algebra such that the $i$-th component is $A_{\le i}/A_{\le i-1}$.

If $A=\bigoplus_{i\geq 0}A_i$ is a graded $K$-algebra then $A_{\le i} = \bigoplus_{j\le i} A_j$ defines a filtration of $A$, and the associated graded algebra is isomorphic to $A$ as graded $K$-algebras.
More precisely, if $x=\sum_{i=0}^lx_i \in A$ with $x_i\in A_i$, then the isomorphism is described as follows:
\begin{align}\label{isom-graded-gr}
A \to \gr A, \qquad x \mapsto (x_i + A_{\leq i-1})_i. 
\end{align}

Let $A$ be a filtered $K$-algebra and let $M$ be an $A$-module.
A filtration of $M$ is a family $\mcM=\{M_{\le i}\}_{i\geq 0}$ of $K$-subspaces $M_{\le i}$ of $M$ satisfying $M=\bigcup_{i\geq 0}M_{\le i}$, $M_{\le i}\subset M_{\le i+1}$ and $A_{\le i} M_{\le j}\subset M_{\le i+j}$.
For filtered $A$-modules $M=\bigcup_{i\geq 0}M_{\le i}$ and $N=\bigcup_{i\geq 0}N_{\le i}$, a morphism of filtered modules of degree $j\in\mbZ$ is a morphism of $A$-modules $f : M \to N$ such that $f(M_{\le i})\subset N_{\le i+j}$ holds for any $i\geq 0$.
In this case, the morphism $f$ induces a morphism
\[
\gr f : \gr M \to \gr N
\]
of graded $(\gr A)$-modules of degree $j$.
We have two well-known and straightforward lemmas.

\begin{lem}\label{lem-grf-inj}
Let $M,N$ be filtered $A$-modules and $f : M \to N$ be a morphism of filtered modules of degree $j\geq 0$. If $\gr f$ is injective, then so is $f$.
\end{lem}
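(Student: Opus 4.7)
The plan is to prove the contrapositive by picking a purported nonzero element of $\ker f$ and extracting from it a nonzero element of $\ker \gr f$, using the finite-level structure of the filtration.

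First I would take a nonzero $x\in M$ with $f(x)=0$. Since $M=\bigcup_{i\geq 0}M_{\le i}$ is exhaustive, the set of indices $i\geq 0$ with $x\in M_{\le i}$ is nonempty, so it has a minimum, say $n$. By minimality (with the convention $M_{\le -1}=0$), the coset $\bar x := x+M_{\le n-1}$ is a nonzero element of the degree-$n$ component $M_{\le n}/M_{\le n-1}$ of $\gr M$.

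Next I would compute $\gr f(\bar x)$ in degree $n+j$: by the definition of the induced map on the associated graded, $\gr f(\bar x)=f(x)+N_{\le n+j-1}$. Since $f(x)=0$, this class is zero in $N_{\le n+j}/N_{\le n+j-1}$, so $\bar x$ lies in the kernel of $\gr f$. Together with $\bar x\neq 0$, this contradicts the injectivity hypothesis on $\gr f$, and hence $f$ must be injective.

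There is no real obstacle here; the only point requiring care is the bookkeeping of the degree shift by $j$ and the base case $n=0$, which is handled uniformly by the convention $M_{\le -1}=0$ (and likewise for $N$).
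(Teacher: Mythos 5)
Your proof is correct and is essentially the paper's argument: the paper shows $f(x)=0$ forces $x\in M_{\le i-1}$ and descends inductively to $M_{\le -1}=0$, which is the same mechanism as your choice of the minimal filtration degree $n$ of a nonzero element of $\ker f$ and the resulting contradiction with injectivity of $\gr f$ in degree $n+j$.
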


\begin{proof}
Let $x\in M$ such that $f(x)=0$.
There exists $i\geq 0$ such that $x\in M_{\le i}$.
Because $f(x)=0$, we have $(\gr f)(\bar{x})=\bar{f(x)}=0$ in $N_{\le i+j}/N_{\le i+j-1}$, where $\bar{x}\in M_{\le i}/M_{\le i-1}$.
Since $\gr f$ is injective, we have $\bar{x}=0$, that is, $x\in M_{\le i-1}$.
By using this argument inductively, we have $x \in M_{\le -1}=0$.
\end{proof}

Let $A$ and $B$ be two filtered $K$-algebras.
For a filtered left $A$-module  $M$ and a filtered right $B$-module $N$, we have the following filtration of $M\otimes_K N$:
\begin{align}\label{tensor-filt}
(M\otimes_K N)_{\le i}:=\sum_{i=j+k} M_{\le j}\otimes_K N_{\le k}.
\end{align}
In particular this induces a filtration on the $K$-algebra $A\otimes_K B^{\op}$, and then $M\otimes_K N$ is a filtered $A$-$B$-bimodule, by which we mean that it is a filtered $A\otimes_K B^{\op}$-module.

\begin{lem}\label{lem-grf-bi-inj}
Let $A$ and $B$ be two filtered $K$-algebras.
Let $M$ be a filtered left $A$-module and $N$ be a filtered right $B$-module.
Then $\gr (M\otimes_K N)$ is isomorphic to $\gr M \otimes_K \gr N$ as a $\gr A$-$\gr B$-bimodule.
If $M=A$ and $N=B$, then this is an isomorphism of graded algebras.
\end{lem}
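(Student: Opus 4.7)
The plan is to construct the isomorphism explicitly degree by degree and verify injectivity using that over a field every filtration splits. First I would define a candidate map: for $m \in M_{\le i}$ and $n \in N_{\le j}$ with $i+j=p$, send the class of $m\otimes n$ in $(M\otimes_K N)_{\le p}/(M\otimes_K N)_{\le p-1}$ to $\bar m \otimes \bar n$, viewed inside the degree $p$ component of $\gr M \otimes_K \gr N$. Well-definedness is immediate from \eqref{tensor-filt}: if $m \in M_{\le i-1}$ or $n \in N_{\le j-1}$, then $m\otimes n$ already lies in $(M\otimes_K N)_{\le p-1}$. Surjectivity in each degree is also immediate from the defining formula $(M\otimes_K N)_{\le p} = \sum_{i+j=p} M_{\le i}\otimes_K N_{\le j}$.

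For injectivity I would exploit that $K$ is a field. Choose $K$-subspaces $V_i \subseteq M_{\le i}$ with $M_{\le i} = M_{\le i-1} \oplus V_i$, and similarly $W_j \subseteq N_{\le j}$ with $N_{\le j} = N_{\le j-1} \oplus W_j$. A straightforward induction yields $M_{\le n} = \bigoplus_{i \le n} V_i$ together with canonical isomorphisms $V_i \isoto M_{\le i}/M_{\le i-1}$, and similarly for $N$. Therefore
\[
(M \otimes_K N)_{\le p} = \sum_{i+j \le p} M_{\le i} \otimes_K N_{\le j} = \bigoplus_{i+j \le p} V_i \otimes_K W_j,
\]
so that $(M \otimes_K N)_{\le p}/(M \otimes_K N)_{\le p-1} \cong \bigoplus_{i+j=p} V_i \otimes_K W_j$, which under the identifications above is the degree $p$ piece of $\gr M \otimes_K \gr N$. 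This gives the desired isomorphism.

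For the bimodule structure, I would check directly that the candidate map commutes with the actions: for $a \in A_{\le k}$ the element $\bar a \in (\gr A)_k$ acts on $\bar m \otimes \bar n$ as $\overline{am}\otimes \bar n$, which matches the class of $(am)\otimes n = a\cdot(m\otimes n)$; the right $B$-action is analogous. For the final assertion, when $M=A$ and $N=B$ the tensor filtration \eqref{tensor-filt} coincides with the filtration making $A\otimes_K B$ a filtered $K$-algebra under componentwise multiplication, and the map sends $\overline{aa'\otimes bb'}$ to $(\bar a\otimes \bar b)(\bar a'\otimes \bar b')$, so it is multiplicative. The essentially only nontrivial point is the injectivity of the candidate map, for which the splitting argument above does all the work; the remaining verifications are routine bookkeeping.
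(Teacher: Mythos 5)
Your proof is correct and follows essentially the same route as the paper: you construct the same canonical map (class of $m\otimes n\mapsto \bar m\otimes\bar n$) and verify it is bijective using that over the field $K$ each filtration step splits, which is exactly the paper's ``compare $K$-bases of $M_{\le i}/M_{\le i-1}$ and $N_{\le i}/N_{\le i-1}$'' argument made explicit via the complements $V_i$, $W_j$. Note only that your splitting decomposition $(M\otimes_K N)_{\le p}=\bigoplus_{i+j\le p}V_i\otimes_K W_j$ is also what really justifies the well-definedness of the map on the whole quotient (not just on classes of elementary tensors), so it carries slightly more weight than the ``injectivity only'' role you assign it.
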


\begin{proof}
A morphism $\phi$ from $\gr(M\otimes_K N)$ to $\gr M \otimes_K \gr N$ is given as follows: for $z=x\otimes y + (M\otimes_K N)_{\le i-1}\in \gr(M\otimes_K N)_i$ with $x\in M_{\le j}, y\in N_{\le k}$ for $j+k=i$, let $\phi(z)=(x+M_{\le j-1})\otimes(y+N_{\le k-1})\in (\gr M \otimes_K \gr N)_i$.
Then this assignment induces a morphism of $\gr A$-$\gr B$-bimodules.
By comparing $K$-bases of $M_{\le i}/M_{\le i-1}$, $N_{\le i}/N_{\le i-1}$ and their images under $\phi$, one can show that $\phi$ is an isomorphism.
\end{proof}

The deformed preprojective algebra $\Pi^\lambda(Q)$ has a filtration $0 = \Pi_{\le -1} \subseteq \Pi_{\le 0} \subseteq \Pi_{\le 1} \subseteq \dots \subseteq \Pi^\lambda(Q)$, where $\Pi_{\le n}$ is spanned by the paths in $\overline{Q}$ of length at most $n$. 
%We write $\gr \Pi^\lambda(Q)$ for the associated graded algebra. 
There is a natural map $\Pi^0(Q)\to \gr \Pi^\lambda(Q)$ and it is surjective \cite[Lemma 2.3]{CBH98}.

\begin{lem}\label{lem-pbw}
If $Q$ is connected and non-Dynkin, then the natural map $\Pi^0(Q)\to \gr \Pi^\lambda(Q)$ is an isomorphism.
\end{lem}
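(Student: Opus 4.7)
Since the natural surjection $\Pi^0(Q) \to \gr \Pi^\la(Q)$ is already given, it suffices to match graded Hilbert series corner-by-corner. The plan is to realize $\Pi^\la(Q)$ as a specialization of a one-parameter graded flat family connecting it to $\Pi^0(Q)$, and to deduce flatness (and hence the Hilbert series match) from the $2$-Calabi-Yau property of $\Pi^0(Q)$ for connected non-Dynkin $Q$, which is cited in the paragraph preceding Theorem \ref{thm-ass-gr}. That property provides a length-two bimodule resolution
\[
0 \to \Pi^0\otimes_{KI}\Pi^0 \to \bigoplus_{a\in\bar Q_1} \Pi^0\otimes_{KI}\Pi^0 \to \Pi^0\otimes_{KI}\Pi^0 \to \Pi^0 \to 0
\]
whose differentials encode the relation $\rho$ and the arrows of $\bar Q$.

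Concretely, I would assign $t$ the grading degree $2$ and set $\tilde\Pi := K[t]\bar Q / (\rho - t\sum_{i\in I}\la_i e_i)$, a graded $K[t]$-algebra whose fibre at $t=0$ is $\Pi^0(Q)$ and whose fibre at $t=1$, after forgetting the grading, is $\Pi^\la(Q)$. Form the analogous length-two complex $\widetilde C_\bullet$ of free $\tilde\Pi$-bimodules with bottom differential recording the now homogeneous (degree $2$) relation $\rho - t\sum_i \la_i e_i$. Its reduction modulo $t$ is the exact complex displayed above, so a graded Nakayama argument over the graded principal ideal domain $K[t]$---applicable because every graded piece of $\widetilde C_\bullet$ is a bounded-below complex of finitely generated free $K[t]$-modules---forces $\widetilde C_\bullet$ itself to be exact. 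Consequently $\tilde\Pi$ is $K[t]$-flat, and each vertex corner $e_j\tilde\Pi e_i$ has the same graded Hilbert series in every degree as $e_j\Pi^0(Q) e_i$.

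Specializing at $t=1$ and identifying the $t$-grading on $\tilde\Pi$ with the path-length filtration on $\Pi^\la(Q)$ yields $\dim_K e_j(\gr\Pi^\la)_n e_i = \dim_K e_j\Pi^0_n e_i$ for all $i,j,n$; combined with the given surjection from \cite[Lemma~2.3]{CBH98}, this forces it to be an isomorphism, as required. I expect the main obstacle to be the graded-Nakayama lifting step: one must justify both the bounded-below finite-generation of the graded pieces of $\widetilde C_\bullet$ over $K[t]$ and the compatibility of the specialization at $t=0$ with the $K[t]$-flat structure on the terms. This is precisely where the 2-CY input on $\Pi^0(Q)$---and hence the connected non-Dynkin hypothesis---gets used, since without exactness of the special fibre there is nothing to lift.
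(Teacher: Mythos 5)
There is a genuine gap at the flatness step, and it is the heart of the matter. Your final reduction is fine: if $t$ is a nonzerodivisor on $\tilde\Pi:=K[t]\bar Q/(\rho-t\sum_i\la_ie_i)$ (with $\deg t=2$), then the corner-wise Hilbert series of $\gr\Pi^\la(Q)$ agree with those of $\Pi^0(Q)$, and the surjection of \cite[Lemma 2.3]{CBH98} is forced to be bijective. But your proposed proof of $K[t]$-flatness does not work as stated, for two reasons. First, to lift exactness of $\widetilde C_\bullet\otimes_{K[t]}K[t]/(t)$ to exactness of $\widetilde C_\bullet$ by a graded Nakayama argument you need the short exact sequence of complexes $0\to\widetilde C_\bullet\xto{t}\widetilde C_\bullet\to\widetilde C_\bullet/t\widetilde C_\bullet\to 0$, i.e.\ you need $t$ to act as a nonzerodivisor on the terms $\bigoplus\tilde\Pi e_{j}\otimes e_{i}\tilde\Pi$ and on $\tilde\Pi$; equivalently, the naive reduction modulo $t$ computes $\widetilde C_\bullet\otimes^{\mathbf L}_{K[t]}K$ only when the terms are already $K[t]$-flat. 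That is precisely the flatness of $\tilde\Pi$ you are trying to establish, so the argument is circular; moreover the graded pieces of $\widetilde C_\bullet$ are finite-dimensional $K$-vector spaces rather than $K[t]$-modules (multiplication by $t$ shifts degree), and their alleged freeness over $K[t]$ is exactly the unknown. Second, even granting exactness of $\widetilde C_\bullet$, it would not follow that $\tilde\Pi$ is $K[t]$-flat: every term of the complex is built out of $\tilde\Pi$ itself, so the complex carries no independent information about $t$-torsion. Indeed, $K[t]$-flatness of this homogenized family is well known to be \emph{equivalent} to the PBW statement of Lemma~\ref{lem-pbw}, so it cannot drop out of a formal Nakayama argument; genuine input in the spirit of Braverman--Gaitsgory/Koszul deformation theory is required.

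That input is exactly what the paper cites: $\Pi^0(Q)$ is Koszul for $Q$ connected non-Dynkin \cite{EE07}, and the PBW theorem for filtered deformations of Koszul algebras over the base $KQ_0$, \cite[Theorem A]{HVOZ}, applied to the relation $\rho-\sum_i\la_ie_i$, yields the lemma at once. Note also that the paper's logical flow is the reverse of yours: Lemma~\ref{lem-pbw} is used in Theorem~\ref{thm-inj-cy} to identify $\gr P_i^\la$ with $P_i^0$ and thereby transfer injectivity of $f^0$ to $f^\la$; one cannot instead use the bimodule complex of the family to prove the lemma without first knowing the flatness it encodes. (A smaller point: the exactness of the standard complex for $\Pi^0(Q)$, including injectivity on the left, is indeed known, but it comes from Koszulity and the known Hilbert series rather than being a formal consequence of the 2-Calabi-Yau property, as your first paragraph suggests.)
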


In other terminology, this says that $\Pi^\lambda(Q)$ is a \emph{PBW deformation} of $\Pi^0(Q)$.

\begin{proof}
If $Q$ is connected and non-Dynkin, it is known that $\Pi^0(Q)$ is Koszul, see \cite{EE07} (and \cite{MV96,BBK02,MOV06} for special cases). The lemma thus follows from \cite[Theorem A]{HVOZ}.
\end{proof}

\subsection{Projective resolution and Calabi-Yau property}
\label{sec-projres-CY}
We begin with results about a partial projective resolution of $\Pi^\lambda(Q)$. The results are already known for the undeformed preprojective algebra \cite[Lemma 1]{CB-except}, and although not written anywhere, it has long been known to the first author that they generalize to $\Pi^\lambda(Q)$, since they were further adapted to multiplicative preprojective algebras in \cite[section 3]{CBS-Multi}.

Let $\Pi=\Pi^\lambda(Q)$.
Let $P_0$ and $P_1$ be the following projective $\Pi$-bimodules,
\[
P_0 = \bigoplus_{i\in I}\Pi e_i \otimes_K e_i\Pi 
\quad\text{and}\quad
P_1 =  \bigoplus_{a\in \bar{Q}_1}\Pi e_{h(a)}\otimes_K e_{t(a)}\Pi.
\]
For any $i\in I$, we write $\eta_i$ for the element $e_i\otimes e_i$ in the $i$th summand of $P_0$, and for any arrow $a\in \overline{Q}_1$, we write $\eta_a$ for the element $e_{h(a)}\otimes e_{t(a)}$ in the $a$th summand of $P_1$.

\begin{pro}\label{pro-bi-resol-a}
There is an exact sequence
$P_0 \xto{f} P_1 \xto{g} P_0 \xto{h} \Pi\to 0$
of $\Pi$-bimodules, where the maps are given by
\[
f(\eta_i) = \sum_{\substack{a\in \bar{Q}_1 \\ h(a)=i}}\ep(a)(\eta_a a^* + a \eta_{a^*}), \quad
g(\eta_a) = a\eta_{t(a)} - \eta_{h(a)} a, \quad
h(\eta_i) = e_i.
\]
\end{pro}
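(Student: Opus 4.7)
The plan is to verify directly that the given formulas define a complex of $\Pi$-bimodules, and then deduce exactness by reducing to the corresponding statement for the undeformed preprojective algebra $\Pi^0(Q)$ via the PBW isomorphism $\gr\Pi^\lambda(Q)\cong \Pi^0(Q)$ furnished by Lemma \ref{lem-pbw}.

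To check the complex property: since $h$ is the bimodule map with $h(\eta_i)=e_i$, applying it to $g(\eta_a)=a\eta_{t(a)}-\eta_{h(a)}a$ gives $a e_{t(a)} - e_{h(a)} a = 0$. A direct expansion using $h(a^*)=t(a)$ and $t(a^*)=h(a)$ shows that
\[
g(f(\eta_i)) \;=\; \rho_i\cdot\eta_i - \eta_i\cdot\rho_i, \qquad \text{where } \rho_i := \sum_{\substack{a\in\bar Q_1 \\ h(a)=i}} \ep(a) a a^* = e_i \rho e_i.
\]
In $\Pi^\lambda(Q)$ the defining relation forces $\rho_i=\la_i e_i$, and $e_i$ acts trivially on both sides of $\eta_i=e_i\otimes e_i$, so $g\circ f=0$.

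For exactness, equip $P_0$ and $P_1$ with the tensor-product filtrations coming from the path-length filtration on $\Pi^\lambda(Q)$, with $\eta_i\in P_0$ (rightmost) in degree $0$, $\eta_a\in P_1$ in degree $1$, and $\eta_i\in P_0$ (leftmost) in degree $2$; with these conventions $f$, $g$, $h$ are morphisms of filtered bimodules. By Lemma \ref{lem-grf-bi-inj} combined with Lemma \ref{lem-pbw}, the associated graded complex is literally the analogous complex for $\Pi^0(Q)$: the formulas for $f$, $g$, $h$ involve only arrows of $\bar Q$ and vertex idempotents and are therefore independent of $\la$. For connected non-Dynkin $Q$, the algebra $\Pi^0(Q)$ is Koszul, and this associated graded complex is the beginning of its Koszul bimodule resolution, whose exactness at the four positions considered is well known; see \cite[Lemma 1]{CB-except}, and \cite{EE07, BBK02, MV96} for the underlying Koszulness. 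A standard filtration argument---Lemma \ref{lem-grf-inj} (and its dual version) together with an induction on filtration degree, or equivalently the vanishing of the $E_1$-page of the spectral sequence of a filtered complex---then lifts exactness of the associated graded to exactness of the filtered complex.

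The main obstacle is exactness in the middle at $P_1$. Surjectivity of $h$ and exactness at the rightmost $P_0$ are elementary, the latter following by a telescoping argument on paths $x_iy_i$ for $\sum_i x_i\otimes y_i\in\ker h$. Exactness at $P_1$ genuinely uses the preprojective relation and, for $\la=0$, is equivalent to a second-order piece of the Koszul property of $\Pi^0(Q)$---a nontrivial fact in the non-Dynkin setting. Once that is granted, the filtration argument transfers the result to arbitrary $\la$ with no additional work.
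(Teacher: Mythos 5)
Your argument is sound in the regime it covers, but it proves less than the proposition asserts, and by heavier means than the paper's own proof. Proposition~\ref{pro-bi-resol-a} is stated for an arbitrary finite quiver $Q$ and arbitrary $\lambda$, and the paper's proof is elementary and fully general: it identifies $P_0\xto{f}P_1\xto{g}P_0\xto{h}\Pi\to 0$ with the standard exact sequence attached to the presentation $\Pi=K\overline{Q}/I$, $I=(\rho-\sum_i\lambda_ie_i)$, namely
$I/I^2 \to \Pi\otimes_{K\overline{Q}}\Omega_S(K\overline{Q})\otimes_{K\overline{Q}}\Pi \to \Pi\otimes_S\Pi\to\Pi\to 0$
from \cite{Sch85}, composed with the surjection $P_0\to I/I^2$, $\eta_i\mapsto I^2+\rho_i-\lambda_ie_i$. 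No Koszulity, no PBW property, and no connectedness or non-Dynkin hypothesis enter at this stage. Your route instead transfers exactness from $\lambda=0$ via Lemma~\ref{lem-pbw}, which is only available (and only true in general) for connected non-Dynkin $Q$; so as written you have established the proposition only under that extra hypothesis. The paper deliberately reserves the filtration/PBW argument for Theorem~\ref{thm-inj-cy}, where the non-Dynkin hypothesis genuinely is needed --- for the injectivity of $f$, which is not part of this proposition.

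Two further points. First, your remark that exactness at $P_1$ for $\lambda=0$ ``is equivalent to a second-order piece of the Koszul property'' is inaccurate: exactness at $P_1$, at the right-hand $P_0$, and surjectivity of $h$ hold for every quiver and every $\lambda$ by the presentation argument above; only the injectivity of $f$ requires the Koszulity/Hilbert-series input. Second, the filtration-lifting step itself is fine: with the shifted, exhaustive, bounded-below filtrations you describe, $f,g,h$ are filtered morphisms of degree $0$, Lemmas~\ref{lem-grf-bi-inj} and~\ref{lem-pbw} identify the associated graded complex with the $\lambda=0$ complex (exactly as the paper does for $f$ in Theorem~\ref{thm-inj-cy}), and exactness of the associated graded lifts to the filtered complex by induction on filtration degree. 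So your proof is a legitimate alternative for connected non-Dynkin $Q$, modulo citing the $\lambda=0$ case from \cite{CB-except}; but to obtain the proposition in its stated generality you should argue via the presentation as in the paper, which is both more general and more elementary.
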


\begin{proof}
We have $\Pi = K\overline{Q}/I$, where $I$ is the ideal in $K\overline{Q}$ generated by the elements $\rho_i - \lambda_i e_i$. Let $S = K\overline Q_0 = \bigoplus_{i\in I} K e_i$. Combining the exact sequence \cite[Theorem 10.1]{Sch85} for $\Omega_S(\Pi)$ with \cite[Theorem 10.3]{Sch85}, gives an exact sequence
\[
I/I^2 \xto{\alpha} \Pi\otimes_{K \overline{Q}} \Omega_S(K\overline{Q}) \otimes_{K \overline{Q}} \Pi \xto{\beta} \Pi \otimes_S \Pi \xto{\gamma} \Pi\to 0.
\]
Here $\alpha(I^2+x) = 1\otimes (x\otimes 1-1\otimes x)\otimes 1$ for $x\in I$,  $\beta(1\otimes \omega\otimes 1)$ is the image of $\omega\in \Omega_S(K\overline{Q}) \subseteq K\overline{Q}\otimes_S K\overline{Q}$ in $\Pi\otimes \Pi$, and $\gamma$ is the multiplication map.

We identify $\Pi \otimes_S \Pi$ with $P_0$, and then $\gamma$ is identified with the map $h$.
Let $V = K\overline{Q}_1$, the vector space spanned by the arrows in $\overline{Q}$, which is naturally an $S$-bimodule. By \cite[Theorem 10.5]{Sch85}, there is an isomorphism 
\[
K \overline{Q} \otimes_S V \otimes_S K\overline{Q}\to \Omega_S(K\overline{Q})
\]
sending $1\otimes a\otimes 1$ to $a\otimes 1 - 1 \otimes a$. Thus we can identify
\[
P_1 \cong \Pi \otimes_S V\otimes_S \Pi \cong \Pi\otimes_{K \overline{Q}} \Omega_S(K\overline{Q}) \otimes_{K \overline{Q}} \Pi 
\]
with $\eta_a$ corresponding to $1\otimes (a\otimes 1 - 1\otimes a)\otimes 1$. Now under the identification of $\Pi \otimes_S \Pi$ with $P_0$, the element $a\otimes 1 - 1\otimes a$ corresponds to 
\[
a\otimes e_{t(a)} - e_{h(a)}\otimes a = a\eta_{t(a)} - \eta_{h(a)} a.
\]
It follows that $\beta$ corresponds to the map $g$. Finally $I$ is generated as an ideal by the elements $\rho_i - \lambda_i e_i$, so there is a surjective map $\phi:P_0 \to I/I^2$ sending $\eta_i$ to $I^2+\rho_i - \lambda_i e_i$. Its composition with $\alpha$
sends $\eta_i$ to $1\otimes(\rho_i\otimes 1-1\otimes \rho_i)\otimes 1$ since $\lambda_i e_i\otimes 1 = \lambda_i e_i\otimes e_i = 1\otimes \lambda_ie_i$.
Now
\begin{align*}
\rho_i\otimes 1-1\otimes \rho_i
&= \sum_{\substack{a\in \bar{Q}_1 \\ h(a)=i}}\ep(a)(a a^* \otimes 1 - 1\otimes a a^*)
\\
&= \sum_{\substack{a\in \bar{Q}_1 \\ h(a)=i}}\ep(a)\left(a (a^* \otimes 1 - 1\otimes a^*) + (a\otimes 1 - 1\otimes a)a^*\right)
\end{align*}
which corresponds in $P_1$ to the same element as $f(\eta_i)$. Thus $\alpha\phi$ corresponds to $f$, giving the result.
\end{proof}

\begin{pro}\label{pro-cx-dual}
The complexes $P_0\xto{f} P_1\xto{g} P_0$ and
\[
\Hom_{\Pi \Pi}(P_0,\Pi\otimes_K \Pi) \xto{-g^*} \Hom_{\Pi \Pi}(P_1,\Pi\otimes_K \Pi) \xto{f^*} \Hom_{\Pi \Pi}(P_0,\Pi\otimes_K \Pi)
\]
are isomorphic as complexes of $\Pi$-bimodules.
\end{pro}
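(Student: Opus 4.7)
The plan is to give explicit $\Pi$-bimodule isomorphisms $\iota_0\colon P_0\to \Hom_{\Pi\Pi}(P_0,\Pi\otimes_K\Pi)$ and $\iota_1\colon P_1\to \Hom_{\Pi\Pi}(P_1,\Pi\otimes_K\Pi)$ and to verify directly that they intertwine the differentials on both sides. The starting point is the standard identification, for any two vertex idempotents $e,e'$, $\Hom_{\Pi\Pi}(\Pi e\otimes_K e'\Pi,\ \Pi\otimes_K \Pi)\cong e\Pi\otimes_K \Pi e'$ via evaluation at $e\otimes e'$. Under this, the inner bimodule rule $(bfc)(m)=\sum a_\lambda c\otimes ba'_\lambda$ from the introduction becomes $(b,c)\cdot (x\otimes y)=xc\otimes by$ on the right, and a further swap of the tensor factors turns it into the ordinary outer bimodule structure on $\Pi e'\otimes_K e\Pi$. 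Applied summand by summand, this identifies the $i$th summand of $\Hom_{\Pi\Pi}(P_0,\Pi\otimes_K\Pi)$ with the $i$th summand of $P_0$, and the $a$th summand of $\Hom_{\Pi\Pi}(P_1,\Pi\otimes_K\Pi)$ with the $a^*$th summand of $P_1$ under the arrow involution.

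To pin down the signs so that both commuting squares work, I would take $\iota_0(\eta_i)$ to be the functional $\eta_j\mapsto \delta_{ij}(e_i\otimes e_i)$, and take $\iota_1(\eta_a)$ to be the functional that vanishes on $\eta_b$ for $b\neq a^*$ and sends $\eta_{a^*}$ to $\ep(a^*)(e_{t(a)}\otimes e_{h(a)})$. Both are $\Pi$-bimodule isomorphisms by the summand identifications above.

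It then remains to verify the two relations $\iota_1 f = -g^*\iota_0$ and $\iota_0 g = f^*\iota_1$. For the second: evaluating $(\iota_0 g)(\eta_a)$ at $\eta_i\in P_0$ produces the functional whose value is $e_{t(a)}\otimes a$ at $\eta_{t(a)}$ and $-a\otimes e_{h(a)}$ at $\eta_{h(a)}$. On the other side, since $\iota_1(\eta_a)$ only sees the $\eta_{a^*}$-summand, only two terms in $f(\eta_i)=\sum_{b:\,h(b)=i}\ep(b)(\eta_b b^* + b\eta_{b^*})$ contribute to $(f^*\iota_1)(\eta_a)(\eta_i)$: one with $b=a^*$ (forcing $i=t(a)$) and one with $b=a$ (forcing $i=h(a)$). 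Combined with the sign $\ep(a^*)$ built into $\iota_1$, these give $\ep(a^*)^2=+1$ and $\ep(a)\ep(a^*)=-1$ respectively, matching the left side precisely thanks to $\ep(a)+\ep(a^*)=0$. The first relation is verified analogously.

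The main obstacle is purely bookkeeping: one must keep the inner and outer bimodule structures on the Hom cleanly apart, track summand indices under the $a\leftrightarrow a^*$ involution, and fix the normalizing sign $\ep(a^*)$ in $\iota_1$ so that both squares commute at once. The identity $\ep(a)+\ep(a^*)=0$ is what makes this possible, and this self-duality of the complex is exactly the structural input needed for the 2-Calabi-Yau property established in the sequel.
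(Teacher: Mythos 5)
Your proposal is correct and follows essentially the same route as the paper: you identify each Hom-summand $\Hom_{\Pi\Pi}(\Pi e_i\otimes_K e_j\Pi,\Pi\otimes_K\Pi)$ with $e_i\Pi\otimes_K\Pi e_j$ and match summands via the $a\leftrightarrow a^*$ involution with the sign $\ep$, which is exactly (the inverse of) the paper's isomorphisms $\alpha$ and $\beta$, and your sign bookkeeping in the two commuting squares checks out.
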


Note that the change of sign for $g^*$ is irrelevant for the truth of the proposition; it is introduced because it arises in the definition of 
$\RHom_{\Pi}(\Pi,\Pi\otimes_K\Pi) \cong \Hom^{\bullet}_{\Pi \Pi}(P^{\bullet},\Pi\otimes_K \Pi)$.

\begin{proof}
Clearly we have $\Hom_{\Pi \Pi} (\Pi e_i \otimes_K e_j \Pi, \Pi\otimes_K \Pi) \cong e_i \Pi\otimes_K \Pi e_j$. Thus there are isomorphisms
\[
\alpha:\Hom_{\Pi \Pi}(P_0,\Pi\otimes_K \Pi)\to P_0,
\quad
\alpha(\phi) = \sum_{i,\lambda} p'_{i\lambda} \eta_i p_{i\lambda},
\]
where $\phi(\eta_i) = \sum_\lambda p_{i\lambda}\otimes p'_{i\lambda}\in e_i\Pi\otimes_K \Pi e_i$, and
\[
\beta: \Hom_{\Pi \Pi}(P_1,\Pi\otimes_K \Pi)\to P_1,
\quad
\beta(\psi) = \sum_{a\in\overline{Q}_1} \epsilon(a) p'_{a\lambda} \eta_{a^*} p_{a\lambda},
\]
where $\psi(\eta_a) = \sum_\lambda p_{a\lambda}\otimes p'_{a\lambda}\in e_{h(a)} \Pi\otimes_K \Pi e_{t(a)}$.
Now we have a commutative diagram
\[
\begin{CD}
\Hom_{\Pi \Pi}(P_0,\Pi\otimes_K \Pi) @>-g^*>> \Hom_{\Pi \Pi}(P_1,\Pi\otimes_K \Pi) @>f^*>> \Hom_{\Pi \Pi}(P_0,\Pi\otimes_K \Pi)
\\
@V\alpha VV @V\beta VV @V \alpha VV
\\
P_0 @>f>> P_1 @>g>> P_0.
\end{CD}
\]
\end{proof}

Let $\kD$ be the duality $\Hom_K(-,K)$.

\begin{pro}\label{pro-dim-Ext}
For finite dimensional $\Pi$-modules $M$, $N$, we have $\Ext^1_\Pi(N,M) \cong \kD\Ext^1_\Pi(M,N)$ and
\begin{align}\label{eq-dim-Ext}
    \dim_K \Ext_{\Pi}^1(M,N) = \dim_K \Hom_{\Pi}(M, N) + \dim_K \Hom_{\Pi}(N, M) - (\Dim M, \Dim N).
\end{align}
\end{pro}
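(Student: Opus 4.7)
The plan is to tensor the partial bimodule resolution of Proposition~\ref{pro-bi-resol-a} with $M$ on the right to obtain the beginning of a projective resolution of $M$, then apply $\Hom_\Pi(-,N)$ to get a three-term complex whose first two cohomologies are $\Hom_\Pi(M,N)$ and $\Ext^1_\Pi(M,N)$. The self-duality of the bimodule complex from Proposition~\ref{pro-cx-dual} will then identify the $K$-dual of this complex with the analogous complex built from $N$ and $M$, delivering both the duality $\kD\Ext^1_\Pi(M,N)\cong \Ext^1_\Pi(N,M)$ and the dimension formula via an Euler characteristic count.

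First I would check that
\[
P_0\otimes_\Pi M \xto{f\otimes 1} P_1\otimes_\Pi M \xto{g\otimes 1} P_0\otimes_\Pi M \xto{h\otimes 1} M \to 0
\]
is exact in its last four positions, which holds because each $P_i$ is projective as a right $\Pi$-module, so the bimodule resolution splits into short exact sequences of right modules with projective terms and these are preserved by $-\otimes_\Pi M$. Writing $C^\bullet(M,N)$ for the three-term complex obtained by applying $\Hom_\Pi(-,N)$, standard adjunctions identify
\[
C^0(M,N)=\bigoplus_{i\in I}\Hom_K(e_iM,e_iN),\qquad C^1(M,N)=\bigoplus_{a\in\bar Q_1}\Hom_K(e_{t(a)}M,e_{h(a)}N),
\]
and $C^2(M,N)=C^0(M,N)$. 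Exactness of the partial resolution at $P_1\otimes_\Pi M$ and $P_0\otimes_\Pi M$ gives $H^0(C^\bullet(M,N))=\Hom_\Pi(M,N)$ and $H^1(C^\bullet(M,N))=\Ext^1_\Pi(M,N)$, and a direct count yields
\[
\dim C^0 - \dim C^1 + \dim C^2 = 2\sum_{i\in I} \alpha_i\beta_i - \sum_{a\in\bar Q_1}\alpha_{t(a)}\beta_{h(a)} = (\Dim M,\Dim N).
\]

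Next, combining Proposition~\ref{pro-cx-dual} with the adjunction $\Hom_\Pi(P\otimes_\Pi M,N)\cong \Hom_{\Pi\otimes_K\Pi^{\op}}(P,\Hom_K(M,N))$ and the canonical bimodule isomorphism $\kD\Hom_K(M,N)\cong \Hom_K(N,M)$ (valid since $M,N$ are finite-dimensional) should produce an isomorphism of complexes $\kD C^\bullet(M,N)\cong C^\bullet(N,M)$, shifted by~$2$. Taking cohomology yields $H^i(C^\bullet(N,M))\cong \kD H^{2-i}(C^\bullet(M,N))$; in degree $1$ this is the desired $\Ext^1_\Pi(N,M)\cong \kD\Ext^1_\Pi(M,N)$, and in degree $0$ it identifies $H^2(C^\bullet(M,N))\cong \kD\Hom_\Pi(N,M)$, so the Euler characteristic identity above rearranges to \eqref{eq-dim-Ext}.

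The principal technical obstacle is constructing the chain-level isomorphism $\kD C^\bullet(M,N)\cong C^\bullet(N,M)$: on individual summands one simply uses the involution $a\leftrightarrow a^*$ on $\bar Q_1$, but one must verify that under this reindexing the duals of $f\otimes 1$ and $g\otimes 1$ really become the differentials in $C^\bullet(N,M)$. This is precisely what the sign $-g^*$ in Proposition~\ref{pro-cx-dual}, together with the sign $\ep(a)$ intrinsic to the formula for $f$, is engineered to arrange, and checking that these signs combine coherently is where the main bookkeeping lies.
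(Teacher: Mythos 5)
Your proposal is correct and follows essentially the same route as the paper's proof: tensor the partial bimodule resolution of Proposition~\ref{pro-bi-resol-a} with $M$ (using the right-module splitting), apply $\Hom_\Pi(-,N)$, compute the Euler characteristic $(\Dim M,\Dim N)$ of the resulting three-term complex, and use Proposition~\ref{pro-cx-dual} together with the adjunction and finite-dimensional duality to identify the $K$-dual of that complex with the one for $(N,M)$. The only cosmetic difference is that you dualize via $\kD\Hom_K(M,N)\cong\Hom_K(N,M)$ where the paper routes through $\kD M\otimes_\Pi\Hom_{\Pi\Pi}(P,\Pi\otimes_K\Pi)\otimes_\Pi N$, which is the same naturality argument in slightly different clothing.
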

%%%%%

\begin{proof}
Since the sequence in Proposition \ref{pro-bi-resol-a} is split as a sequence of right $\Pi$-modules, it induces an exact sequence
\[
P_0 \otimes_\Pi M \to P_1 \otimes_\Pi M \to P_0 \otimes_\Pi M \to M\to 0
\]
which is the start of a projective resolution of $M$. Applying $\Hom_\Pi(-,N)$ gives a complex
\begin{equation}\label{eq-hom-cx}
0 \to \Hom_\Pi(P_0 \otimes_\Pi M,N) \to \Hom_\Pi(P_1 \otimes_\Pi M,N) \to \Hom_\Pi(P_0 \otimes_\Pi M,N)\to 0.
\end{equation}
Since $\Hom_\Pi (\Pi e_i \otimes_K e_j\Pi \otimes_\Pi M,N) \cong \Hom_K (e_j M, e_i N)$, we see that the spaces in the complex are finite dimensional, and the alternating sum of their dimensions is $(\Dim M,\Dim N)$. Now the cohomology in the first two places is $\Hom_\Pi(M,N)$ and $\Ext^1_\Pi(M,N)$. For $P$ a f.g.\ projective $\Pi$-bimodule, we have natural isomorphisms
\[
\kD\Hom_\Pi(P\otimes_\Pi M,N) 
\cong
\kD \Hom_{\Pi \Pi}(P,\Hom_K(M,N))
\cong
\kD \left( \kD M \otimes_\Pi \Hom_{\Pi \Pi}(P,\Pi\otimes_K \Pi)\otimes_\Pi N)\right)
\]
\[
\cong \Hom_\Pi(\Hom_{\Pi \Pi}(P,\Pi\otimes_K \Pi)\otimes_\Pi N,M).
\]
Using Proposition~\ref{pro-cx-dual}, we see that the dual of the complex (\ref{eq-hom-cx}) is isomorphic to the same complex, but with $M$ and $N$ exchanged. Thus the cohomology in the second two places of the complex (\ref{eq-hom-cx}) is isomorphic to $\kD\Ext^1_\Pi(N,M)$ and $\kD\Hom_\Pi(N,M)$. The result follows.
\end{proof}

%Using Theorem \ref{thm-ass-gr}, we obtain the following.

\begin{thm}
\label{thm-inj-cy}
If $Q$ is a connected non-Dynkin quiver, then the map 
$f$ in Proposition \ref{pro-bi-resol-a} is injective, so the exact sequence given there is a projective resolution of $\Pi$ as a $\Pi$-bimodule, and the global dimension of $\Pi$ is at most 2.
\end{thm}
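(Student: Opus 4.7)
The plan is to reduce to the undeformed case using the PBW property. Since the map $f$ for $\Pi^0(Q)$ is already known to be injective for connected non-Dynkin $Q$ by \cite[Lemma 1]{CB-except}, the main task is to exploit the filtered-graded machinery already set up in this section.

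First I would equip each of $P_0$ and $P_1$ with the filtration inherited from the tensor product of the path-length filtration on $\Pi = \Pi^\lambda(Q)$ (cf.~\eqref{tensor-filt}), placing $\eta_i\in P_0$ and $\eta_a\in P_1$ in filtered degree $0$. With these conventions the formulas of Proposition~\ref{pro-bi-resol-a} show that $f$ is a morphism of filtered $\Pi$-bimodules of degree $1$, and $g$ is of degree $1$ as well. By Lemma~\ref{lem-grf-bi-inj} applied to each summand, together with the PBW isomorphism $\Pi^0(Q)\isoto \gr\Pi^\lambda(Q)$ of Lemma~\ref{lem-pbw}, the associated graded bimodules $\gr P_0$ and $\gr P_1$ are naturally identified with the bimodules
\[
P_0^0 = \bigoplus_{i\in I}\Pi^0 e_i\otimes_K e_i \Pi^0,\qquad P_1^0 = \bigoplus_{a\in \bar Q_1}\Pi^0 e_{h(a)}\otimes_K e_{t(a)}\Pi^0
\]
attached to $\Pi^0(Q)$.

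Next I would verify that under this identification, $\gr f$ coincides with the map $f^0: P_0^0\to P_1^0$ of Proposition~\ref{pro-bi-resol-a} for the undeformed algebra. The only difference between the two definitions of $f$ is the presence of the terms $\lambda_i e_i$ in $\rho_\lambda$: tracing through the construction in the proof of Proposition~\ref{pro-bi-resol-a}, these contribute only to strictly lower filtered degree in $P_1$ and therefore vanish in the associated graded. Hence $\gr f = f^0$, and by \cite[Lemma 1]{CB-except} the map $f^0$ is injective for connected non-Dynkin $Q$. Applying Lemma~\ref{lem-grf-inj} to $f$ (which is of degree $1\geq 0$) then gives that $f$ is injective.

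With $f$ injective, Proposition~\ref{pro-bi-resol-a} yields a projective resolution of $\Pi$ of length $2$ as a $\Pi$-bimodule. For any left $\Pi$-module $M$, applying $-\otimes_\Pi M$ to this resolution gives a length-$2$ resolution of $M$ whose terms are direct sums of copies of modules of the form $\Pi e_j\otimes_K e_k M$; each such term is a direct sum of copies of $\Pi e_j$ and is therefore a projective left $\Pi$-module. Thus $\gldim\Pi\leq 2$.

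The main obstacle is really the bookkeeping in the second step, namely checking that the filtrations on the $P_i$ are compatible with the bimodule structure and that $\gr f = f^0$; once this identification is secured, everything follows formally from Lemmas~\ref{lem-grf-inj} and~\ref{lem-grf-bi-inj}, the PBW result of Lemma~\ref{lem-pbw}, and the known injectivity in the undeformed case.
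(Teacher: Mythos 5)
Your proposal is correct and follows essentially the same filtered--graded argument as the paper: equip $P_0^\lambda,P_1^\lambda$ with the induced filtrations, identify $\gr P_i^\lambda$ with $P_i^0$ via Lemmas~\ref{lem-grf-bi-inj} and~\ref{lem-pbw}, observe that $\gr f^\lambda=f^0$ (the $\lambda_i e_i$ terms dying in the associated graded), and conclude injectivity from Lemma~\ref{lem-grf-inj} together with the known undeformed case. The only point you leave implicit is why exactness survives $-\otimes_\Pi M$; as the paper notes, this is because the bimodule resolution is split as a sequence of one-sided $\Pi$-modules (its terms being projective on either side), after which projectivity of the terms $\Pi e_j\otimes_K e_k M$ gives $\gldim\Pi\le 2$.
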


\begin{proof}
For $\lambda=0$ this follows from the fact, mentioned in the proof of Lemma~\ref{lem-pbw}, that $\Pi^0(Q)$ is Koszul with known Hilbert series. In general, to show the dependence on $\lambda$, we write the map $f$ in Proposition~\ref{pro-bi-resol-a} as $f^\lambda : P_0^\lambda \to P_1^\lambda$. Now the filtration on $\Pi$ induces filtrations on the bimodules $P_0^\lambda$ and $P_1^\lambda$, and there are natural maps $P_i^0 \to \gr P_i^\lambda$, which are isomorphisms thanks to Lemmas~\ref{lem-grf-bi-inj} and~\ref{lem-pbw}. We have a commutative square
\[
\begin{CD}
P^0_0 @>f^0>> P_1^0 \\
@VVV @VVV \\
\gr P^\lambda_0 @>\gr f^\lambda>> \gr P^\lambda_1
\end{CD}
\]
and since $f^0$ is injective, Lemma~\ref{lem-grf-inj} implies that $f^\lambda$ is injective. Tensoring the bimodule projective resolution of $\Pi$ with a $\Pi$-module $M$, the resulting sequence remains exact (since the projective resolution of $\Pi$ is split as a sequence of one-sided $\Pi$-modules), so shows that $M$ has a projective resolution of length at most 2.
\end{proof}

\begin{proof}[Proof of Theorem \ref{thm-ass-gr}]
By Theorem~\ref{thm-inj-cy}, in the derived category of $\Pi$-bimodules, $\Pi$ is isomorphic to the complex $P_0\to P_1\to P_0$, so the Calabi-Yau property follows from Proposition~\ref{pro-cx-dual}.
\end{proof}

\section{Reflection functors}\label{section-DPA}

In this section, we define reflection functors on the module categories of deformed preprojective algebras and observe relations on these functors. Since these functors changes weights, contrary to section~\ref{section-calabi-yau}, we denote by $\Pi^\la=\Pi^\la(Q)$ the deformed preprojective algebra of a quiver $Q$ with a weight $\la$.
%%%%%%%%%%%%%%%%%%%%%%%%%%%%%%
%%%%%%%%%%%%%%%%%%%%%%%%%%%%%%
\subsection{Reflection functors via cokernels and kernels}
\label{sec-ref-cokker}
In this subsection, we induce and study reflection functors via cokernels and kernels on modules over deformed preprojective algebras.

A representation $V$ of $\Pi^{\la}=\Pi^\la(Q)$ is a set $V=(V_i, V_a)$ of $K$-vector spaces $V_i$ ($i\in I$) and morphisms of $K$-vector spaces $V_a : V_{t(a)} \to V_{h(a)}$ ($a\in \bar{Q}_1$) satisfying the following relations for each $i\in I$:
\[
\sum_{ h(a)=i}\ep(a)V_aV_{a^{\ast}}=\la_i \id_{V_i}.
\]
Here the sum is over all arrows $a$ in $\overline{Q}$ with head at $i$.
A morphism between two representations $U, V$ is a set $f=(f_i)_{i\in I}$ of $K$-linear morphisms $f_i : U_i \to V_i$ satisfying $f_{h(a)}U_a = V_a f_{t(a)}$ for any $a\in\bar{Q}_1$.
We denote by $\Rep\Pi^\la$ the category of representations  of $\Pi^{\la}$.
Then it is well-known that there is an equivalence $F : \Mod\Pi^{\la} \to \Rep\Pi^\la$ of categories given by $F(M)_i=e_iM$ and $F(M)_a=(a\cdot)$.
So we identify representations of $\Pi^{\la}$ and $\Pi^{\la}$-modules by this equivalence.

Fix a loop-free vertex $i\in I$ and a representation $V=(V_j, V_a)$ of $\Pi^{\la}$, let \[V_{\oplus }=\bigoplus_{ h(a)=i}V_{t(a)},\]
and let $\mu_a^V$ and $\pi_a^V$ be a canonical injection and a canonical surjection between $V_{t(a)}$ and $V_{\oplus}$.
Let
\begin{align*}
\mu^V=\sum_{h(a)=i}\mu_a^V V_{a^{\ast}} : V_i \to V_{\oplus}\\
\pi^V=\sum_{h(a)=i}\ep(a)V_a\pi_a^V : V_{\oplus} \to V_i.
\end{align*}
So we have a sequence
\begin{align}\label{seq-theta-phi}
&V_i \xto{\mu^V} V_{\oplus} \xto{\pi^V} V_i.
\end{align}
It is easy to see that $\pi^V\mu^V=\la_i\id_{V_i}$ holds.
If $V$ is clear from the context, then we write $\pi^V=\pi$, $\mu^V=\mu$, etc.
For $b, c\in\bar{Q}_1$ with $h(b)=h(c)=i$, we have
\begin{equation}\label{eq-pinu}
\pi_c\mu\pi\mu_b=\ep(b)V_{c^{\ast}}V_b.
\end{equation}

We define a representation $C_i(V)=(C_i(V)_j, C_i(V)_a)$ of $\Pi^{r_i\la}$ as follows.
For $j\in I$, let
\[
  C_i(V)_j=\begin{cases}
    V_j & (j\neq i), \\
    \Coker(\mu) & (j=i).
  \end{cases}
\]
We have an exact sequence $V_i \xto{\mu} V_{\oplus} \xto{c} C_i(V)_i \to 0.$
Since $\pi\mu=\la_i\id_{V_i}$, there is a morphism $\gamma : C_i(V)_i \to V_{\oplus}$ which makes the following diagram commutative with an exact row:
\begin{equation}\label{dia-gamma}
\begin{tikzcd}
V_i \arrow[r, "\mu"] & V_{\oplus} \arrow[r, "c"] \arrow[d, "\mu\pi-\la_i\id_{V_{\oplus}}"'] & C_i(V)_i \arrow[ld, "\gamma"] \arrow[r] & 0 \\
& V_{\oplus}
\end{tikzcd}
\end{equation}
Then for an arrow $a\in \bar{Q}_1$, let
\begin{equation}\label{V'-hom}
  C_i(V)_a=\begin{cases}
    V_a & (h(a)\neq i \neq t(a)), \\
    \ep(a)c\mu_a & (h(a)=i), \\
    \pi_{a^{\ast}}\gamma & (t(a)=i).
  \end{cases}
\end{equation}
For a morphism $f: U \to V$ of representations of $\Pi^{\la}$,
let $C_i(f)_j=f_j$ if $j\neq i$ and  $C_i(f)_i$ be a map induced from the following commutative diagram:
\begin{equation}\label{diag-cok-f}
\begin{tikzcd}
U_i \arrow[r, "\mu^U"] \arrow[d, "f_i"] & U_{\oplus} \arrow[r, "c^U"] \arrow[d, "\oplus_{a}f_{t(a)}"] & C_i(U)_i \arrow[r] \arrow[d, "C_i(f)_i"] & 0 \\
V_i \arrow[r, "\mu^V"] & V_{\oplus} \arrow[r, "c^V"] & C_i(V)_i \arrow[r] & 0,
\end{tikzcd}
\end{equation}
where $\oplus_{a}f_{t(a)} := \oplus_{h(a)=i}f_{t(a)}$.

We see that $C_i$ is actually a functor.
It is known that $C_i$ is a functor in the case where $\la=0$ \cite{BK}.
If $\la\neq 0$, then this was stated in \cite{CBH98} without a detailed proof.
Therefore we write the proof here.
%%%%%%%%%%%%%%%%%%%%
\begin{pro}\label{pro-C-functor}
For each loop-free vertex $i\in I$, the above $C_i$ gives a covariant functor
\[
C_i : \Mod\Pi^{\la} \to \Mod\Pi^{r_i\la}.
\]
\end{pro}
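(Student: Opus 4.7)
The plan is to verify the three things that are not fully spelled out by the construction: (a) for every $V$, the tuple $C_i(V)$ really satisfies the deformed preprojective relations for the weight $r_i\lambda$; (b) for every morphism $f$, the component $C_i(f)_i$ is well-defined and commutes with all arrows; (c) $C_i$ preserves identities and composition. Item (c) is immediate from the universal property of cokernel, so the work is in (a) and (b). Throughout I would systematically exploit the two identities $\pi\mu=\lambda_i\id_{V_i}$, $\gamma c=\mu\pi-\lambda_i\id_{V_\oplus}$, together with (\ref{eq-pinu}).

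For (a), I would split the check by vertex. At the vertex $i$, the definition of $C_i(V)$ gives
\[
\sum_{h(a)=i}\ep(a)C_i(V)_a C_i(V)_{a^*}
=\sum_{h(a)=i}\ep(a)\bigl(\ep(a)c\mu_a\bigr)\bigl(\pi_a\gamma\bigr)
=c\Bigl(\sum_{h(a)=i}\mu_a\pi_a\Bigr)\gamma = c\gamma.
\]
Composing $\gamma c=\mu\pi-\lambda_i\id_{V_\oplus}$ with $c$ on the left and using $c\mu=0$ shows $c\gamma\, c=-\lambda_i c$, and surjectivity of $c$ gives $c\gamma=-\lambda_i\id_{C_i(V)_i}=(r_i\lambda)_i\id$. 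At a vertex $j\neq i$, I would split the sum over arrows $a$ with $h(a)=j$ into those with $t(a)\neq i$ (where $C_i(V)_a=V_a$) and those with $t(a)=i$; for the latter $C_i(V)_aC_i(V)_{a^*}=-\pi_{a^*}\gamma c\mu_{a^*}=-\pi_{a^*}(\mu\pi-\lambda_i\id)\mu_{a^*}$, and (\ref{eq-pinu}) together with $\pi_{a^*}\mu_{a^*}=\id_{V_j}$ yields $\ep(a)V_aV_{a^*}+\lambda_i\id_{V_j}$ for each such arrow. Summing and using the original relation for $V$ at $j$ gives $\lambda_j\id-\lambda_i(\ep_i,\ep_j)\id=(r_i\lambda)_j\id$, as required.

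For (b), I would first observe that the condition that $f$ is a morphism of representations yields $(\oplus_a f_{t(a)})\mu^U=\mu^V f_i$, so the left square of (\ref{diag-cok-f}) commutes and the universal property of cokernel defines $C_i(f)_i$ uniquely. Then the three cases of arrows must be checked. Arrows neither incident to $i$ are trivial, and arrows with $h(a)=i$ reduce to the commutativity of the right square of (\ref{diag-cok-f}) after precomposing with $\mu_a^U$. The main obstacle is the case $t(a)=i$, where one needs $\gamma^V C_i(f)_i=(\oplus_b f_{t(b)})\gamma^U$; I would verify this by precomposing with the surjection $c^U$ and using the defining property $\gamma c=\mu\pi-\lambda_i\id$ together with the two naturalities $(\oplus_bf_{t(b)})\mu^U=\mu^V f_i$ and $\pi^V(\oplus_b f_{t(b)})=f_i\pi^U$ (the second of which is immediate from $V_bf_{t(b)}=f_i U_b$ when $h(b)=i$). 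Surjectivity of $c^U$ then cancels to give the desired equality, and postcomposing with the projections $\pi_{a^*}$ completes the intertwining relation for arrows out of $i$.

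Functoriality in (c) is then automatic: $C_i(\id_V)_i$ is the unique induced endomorphism of $\Coker\mu^V$ making the cokernel square commute with $\id$ in the middle, hence equals the identity; and $C_i(gf)_i$ and $C_i(g)_iC_i(f)_i$ both satisfy the same defining property, so they agree by uniqueness.
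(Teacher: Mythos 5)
Your proposal is correct and follows essentially the same route as the paper: a vertex-by-vertex check of the $\Pi^{r_i\la}$-relations using $\gamma c=\mu\pi-\la_i\id$ and surjectivity of $c$, an arrow-by-arrow check of the morphism condition via the cokernel diagram, and functoriality from the universal property. The only (harmless) difference is in the case $t(a)=i$, where you derive the clean identity $\gamma^V C_i(f)_i=(\oplus_b f_{t(b)})\gamma^U$ directly from the naturality $\pi^V(\oplus_b f_{t(b)})=f_i\pi^U$, whereas the paper verifies the corresponding equality componentwise using the identity (\ref{eq-pinu}).
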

%%%%%
\begin{proof}
We first see that $V'=C_i(V)$ is a representation of $\Pi^{r_i\la}(Q)$.
We show that $\sum_{h(a)=j}\ep(a)V'_aV'_{a^{\ast}}=(r_i\la)_j\id_{V'_j}$ holds for each vertex $j$.
Assume that $j=i$, then we have
\begin{align*}
 \sum_{h(a)=i}\ep(a)V'_aV'_{a^{\ast}}c  \stackrel{(\ref{V'-hom})}{=} \sum_{h(a)=i} \ep(a)\ep(a)c\mu_a\pi_a\gamma c
  \stackrel{(\ref{dia-gamma})}{=} \sum_{h(a)=i} c\mu_a\pi_a(\mu\pi-\la_i\id_{V_{\oplus i}}) 
  = c(\mu\pi-\la_i\id_{V_{\oplus i}})
  =-\la_ic.
\end{align*}
Since $c$ is surjective, we have the desired equality.

Assume that $j\neq i$ and there are $m$ arrows between $i$ and $j$ in $Q$.
We have $(r_i\la)_j=\la_j+m\la_i$ and
\begin{align*}
 \sum_{h(a)=j,\,t(a)\neq i}\ep(a)V'_aV'_{a^{\ast}} + \sum_{\substack{h(a)=j \\ t(a)=i}}\ep(a)V'_aV'_{a^{\ast}} 
 & \stackrel{(\ref{V'-hom})}{=}\sum_{\substack{h(a)=j \\ t(a)\neq i}} \ep(a)V_aV_{a^{\ast}} + \sum_{\substack{h(a)=j \\  t(a)= i}}\ep(a)\pi_{a^{\ast}}\gamma\ep(a^{\ast}) c\mu_{a^{\ast}}\\
 & \stackrel{(\ref{dia-gamma})}{=} \sum_{\substack{h(a)=j \\ t(a)\neq i}} \ep(a)V_aV_{a^{\ast}} - \sum_{\substack{h(a)=j \\  t(a)= i}} \pi_{a^{\ast}}\mu\pi\mu_{a^{\ast}} + \la_i\sum_{\substack{h(a)=j \\  t(a)= i}} \pi_{a^{\ast}}\mu_{a^{\ast}} \\
 & \stackrel{(\ref{eq-pinu})}{=}(\la_j+m\la_i)\id_{V_j}.
\end{align*}
Therefore $V'=C_i(V)$ is a representation of $\Pi^{r_i\la}(Q)$.

Let $f :U \to V$ be a morphism of representations of $\Pi^{\la}$.
We check that $f'=C_i(f)$ is a morphism of representations of $\Pi^{r_i\la}$.
Let $b\in \bar{Q}_1$.
If $h(b)=i$, then
\[
    f'_i U'_b
    \stackrel{(\ref{V'-hom})}{=} f'_i\ep(b)c^U\mu_b^U  
    \stackrel{(\ref{diag-cok-f})}{=} \ep(b)c^V(\oplus_af_{t(a)})\mu_b^U
    =\ep(b)c^V\mu_b^V f_{t(b)}
    \stackrel{(\ref{V'-hom})}{=} V'_{b}f_{t(b)}
\]
holds, where $\oplus_{a}f_{t(a)} = \oplus_{h(a)=i}f_{t(a)}$.

Assume that $t(b)=i$.
For any arrow $c$ of $\bar{Q}$ with $h(c)=i$, we have
\begin{align*}
\pi_{b^{\ast}}^V\mu^V\pi^V(\oplus_a f_{t(a)})\mu_c^U
= \pi_{b^{\ast}}^V\mu^V\pi^V\mu_c^V f_{t(c)}
\stackrel{(\ref{eq-pinu})}{=} \ep(c)V_bV_c f_{t(a)}
= \ep(c)f_{h(b)}U_b U_c
\stackrel{(\ref{eq-pinu})}{=} f_{h(b)}\pi_{b^{\ast}}^U\mu^U\pi^U\mu_c^U.
\end{align*}
Since $\oplus_c \mu_c^U = \id_{U_{\oplus}}$ holds, we have $\pi_{b^{\ast}}^V\mu^V\pi^V(\oplus_a f_{t(a)}) = f_{h(b)}\pi_{b^{\ast}}^U\mu^U\pi^U$.
Therefore we have
\begin{align*}
    V'_b f'_i c^U & \stackrel{(\ref{V'-hom})}{=} \pi_{b^{\ast}}^V \gamma^V f'_i c^U \stackrel{(\ref{diag-cok-f})}{=} \pi_{b^{\ast}}^V \gamma^V c^V (\oplus_a f_{t(a)}) \stackrel{(\ref{dia-gamma})}{=} \pi_{b^{\ast}}^V (\mu^V\pi^V-\la_i\id_{V_{\oplus}})(\oplus_a f_{t(a)}) \\
    & = f_{h(b)}\pi_{b^{\ast}}^U(\mu^U\pi^U-\la_i\id_{V_{\oplus}}) \stackrel{(\ref{dia-gamma})}{=} f_{h(b)}\pi_{b^{\ast}}^U\gamma^U c^U \stackrel{(\ref{V'-hom})}{=} f_{h(b)} U'_{b} c^U.
\end{align*}
Since $c^U$ is surjective, we have the desired equality.
If $h(b)\neq i \neq t(b)$, then it is easy to see that $V'_{h(b)}f'_b=f'_bV'_{t(b)}$ holds.
Thus $f'$ is a morphism.

Finally, $C_i(gf)=C_i(g)C_i(f)$ holds by the diagram (\ref{diag-cok-f}).
This completes the proof.
\end{proof}
%%%%%%%%%%%%%%%%%%%%
For a loop-free vertex $i\in I$, since $r_i r_i=\id$, by the same construction, we also have a covariant functor from $\Mod\Pi^{r_i\la}$ to $\Mod\Pi^{\la}$.
Therefore, to distinguish directions of functors, we use the superscript $\la$,
\[C_i^{\la} = C_i : \Mod\Pi^{\la} \to \Mod\Pi^{r_i\la}.\]

Next we define reflection functors via kernels.
For a representation $V=(V_i, V_a)$ of $\Pi^{\la}$ and a loop-free vertex $i\in I$, we define a representation $K_i^\la(V)=(K_i^\la(V)_j, K_i^\la(V)_a)$ as follows:
For $j\in I$, let
\[
  K_i(V)_j=\begin{cases}
    V_j & (j\neq i), \\
    \Ker(\pi) & (j=i).
  \end{cases}
\]
We have a short exact sequence $0 \to K_i^\la(V)_i \xto{\iota} V_{\oplus} \xto{\pi} V_i.$
Since $\pi\mu=\la_i\id_{V_i}$, there is a morphism $\gamma : V_{\oplus}\to K_i^\la(V)_i$ which makes the following diagram commutative:
%For each arrow $a\in \bar{Q}_1$ with $h(a)=i$, there is a morphism $\gamma_a : V'_i \to V_{t(a)}$ which makes the following diagram commutative:
\begin{equation}\label{dia-gamma-kernel}
\begin{tikzcd}
0 \arrow[r] & K_i^\la(V)_i \arrow[r, "\iota"] & V_{\oplus} \arrow[r, "\pi"] & V_i \\
& & V_{\oplus} \arrow[u, "\mu\pi-\la_i\id"'] \arrow[ul, "\gamma"]
\end{tikzcd}
\end{equation}
%For each arrow $a\in \bar{Q}_1$ with $h(a)=i$, let $\gamma_a:=\pi_a\gamma$.
Recall that $V_{t(a)} \xto{\mu_a} V_{\oplus} \xto{\pi_a} V_{t(a)}$ are the canonical inclusion and  surjection.
Then for an arrow $a\in \bar{Q}_1$, let
\[
  K_i^\la(V)_a=\begin{cases}
    V_a & (h(a)\neq i \neq t(a)), \\
    \ep(a)\gamma\mu_a & (h(a)=i), \\
    \pi_{a^{\ast}}\iota & (t(a)=i).
  \end{cases}
\]

%%%%%%%%%%%%%%%%%%%%
\begin{pro}\label{pro-K-functor}
For each loop-free vertex $i\in I$, we have a covariant functor $K_i^\la: \Mod\Pi^{\la} \to \Mod\Pi^{r_i\la}$.
\end{pro}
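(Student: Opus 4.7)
The plan is to mirror, mutatis mutandis, the proof of Proposition~\ref{pro-C-functor}, with the kernel diagram (\ref{dia-gamma-kernel}) and the inclusion $\iota$ playing the roles previously played by the cokernel diagram (\ref{dia-gamma}) and the surjection $c$. Throughout, write $V' = K_i^\la(V)$; the map $\gamma$ is characterised by $\iota\gamma = \mu\pi - \la_i\id_{V_{\oplus}}$, and $\iota$ is injective with $\pi\iota = 0$.

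First I would verify that $V'$ satisfies the relations of $\Pi^{r_i\la}$. At vertex $i$, the definition of $K_i^\la(V)_a$ gives
\[
\sum_{h(a) = i}\ep(a)\, V'_a V'_{a^*} \;=\; \sum_{h(a) = i}\ep(a)^2 \gamma\mu_a \pi_a \iota \;=\; \gamma\iota,
\]
using $\sum_{h(a)=i}\mu_a\pi_a = \id_{V_{\oplus}}$. Composing with $\iota$ on the left yields $\iota\gamma\iota = (\mu\pi - \la_i\id)\iota = -\la_i\iota$, and injectivity of $\iota$ forces $\gamma\iota = -\la_i\id_{V'_i} = (r_i\la)_i\id_{V'_i}$. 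At a vertex $j \neq i$ joined to $i$ by $m$ arrows of $Q$, substituting $\iota\gamma = \mu\pi - \la_i\id$ and invoking (\ref{eq-pinu}) together with $\pi_{a^*}\mu_{a^*} = \id_{V_j}$ reduces $\sum_{h(a) = j}\ep(a)\, V'_a V'_{a^*}$ to $\la_j\id_{V_j} + m\la_i\id_{V_j} = (r_i\la)_j\id_{V_j}$; here the sign $\ep(a^*) = -\ep(a)$ in the summand over $t(a) = i$ absorbs the sign introduced by the substitution of $\iota\gamma$, exactly as in Proposition~\ref{pro-C-functor}.

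Next, for a morphism $f : U \to V$ of $\Pi^\la$-representations, the compatibility $V_a f_{t(a)} = f_i U_a$ for each arrow with $h(a) = i$ yields the key identity $\pi^V \circ (\oplus_a f_{t(a)}) = f_i \pi^U$. Hence $(\oplus_a f_{t(a)})\iota^U$ is annihilated by $\pi^V$ and factors uniquely through $\iota^V$, producing $K_i^\la(f)_i : K_i^\la(U)_i \to K_i^\la(V)_i$ and the kernel analogue of the diagram (\ref{diag-cok-f}). The verification that $K_i^\la(f)$ respects the arrow actions at each $b \in \bar{Q}_1$ proceeds in direct parallel to Proposition~\ref{pro-C-functor}: the case $h(b) = i$ uses injectivity of $\iota^V$, while the case $t(b) = i$ uses $\iota\gamma = \mu\pi - \la_i\id$ together with $\pi^V(\oplus_a f_{t(a)}) = f_i \pi^U$. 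Functoriality $K_i^\la(gf) = K_i^\la(g)\, K_i^\la(f)$ is then immediate from the uniqueness in the kernel universal property.

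The only technical obstacle is the sign bookkeeping in the second paragraph, but since $\iota\gamma$ plays for $K_i^\la$ exactly the role that $\gamma c$ plays for $C_i^\la$, the computations are essentially transcribed from the proof of Proposition~\ref{pro-C-functor}, with injectivity of $\iota$ replacing surjectivity of $c$ wherever the latter was used to cancel a post-composed factor.
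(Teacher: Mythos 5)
Your proposal is correct and is essentially the paper's own proof, which simply states that the argument is the same as for Proposition~\ref{pro-C-functor} with kernels replacing cokernels; your detailed verification of the relations at $i$ (via $\gamma\iota=-\la_i\id$ using injectivity of $\iota$) and at $j\neq i$ (via (\ref{eq-pinu})) matches the intended dualization. One tiny remark: in the morphism check the case $t(b)=i$ is even simpler than you state (it follows directly from the commuting square defining $K_i^\la(f)_i$, without needing $\iota\gamma=\mu\pi-\la_i\id$), while it is the case $h(b)=i$ that uses both that relation and injectivity of $\iota^V$.
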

%%%%%
\begin{proof}
The proof is similar for Proposition \ref{pro-C-functor}.
\end{proof}

\begin{proof}[Proof of Theorem \ref{intro-thm-CK}]
(i)
Let $V=(V_i, V_a)$ be a representation of $\Pi^{\la}$ and $W=(W_i, W_a)$ be a representation of $\Pi^{r_i\la}$.
Let $f: C_i^{\la}(V) \to W$ be a morphism of $\Pi^{r_i\la}$-modules.
Then we have the following commutative diagram with exact rows and an induced morphism $g_i$:
\begin{equation*}
\begin{tikzcd}
& V_i \arrow[r, "\mu"] \arrow[d,"g_i", dashed] & V_{\oplus} \arrow[r,"c"] \arrow[d, "\oplus f_{t(a)}"] & C_i^{\la}(V)_i \arrow[r] \arrow[d,"f_i"] & 0\\
0 \arrow[r] & K_i^{r_i\la}(W)_i \arrow[r, "\iota"] & W_{\oplus} \arrow[r, "\pi"] & W_i
\end{tikzcd}
\end{equation*}
Let $g : V \to K_i^{r_i\la}(W)$ be a map given by $g_j=f_j$ if $j\neq i$ and $g_i$ as in the diagram.
Then this $g$ is a morphism of $\Pi^{\la}$-modules.
We have a map $\Hom_{\Pi^{r_i\la}}(C_i^{\la}(V), W) \to \Hom_{\Pi^{\la}}(V, K_i^{r_i\la}(W))$ by $f\mapsto g$.
The dual argument induces an inverse direction map and they are inverse each other.
Therefore $C_i^\la$ is left adjoint to $K_i^{r_i \la}$.

(ii)
Assume that $\la_i\neq 0$.
Then the sequence (\ref{seq-theta-phi}) splits, so the functors $C_i^\la$ and $K_i^\la$ are naturally isomorphic.
By the definition of $E_i$, $E_i$ and $K_i^\la$ are isomorphic as functors, see \cite{CBH98}. 
Therefore $C_i^\la \cong K_i^\la \cong E_i$ is an equivalence of categories.

(iii)
Use the same proof as in \cite[section 5]{BKT}; see also \cite[section 2]{BK}.
\end{proof}

%%%%%%%%%%%%%%%%%%%%
%%%%%%%%%%%%%%%%%%%%
\subsection{Braid relations on reflection functors}
\label{sec-braid}
In this subsection we prove Theorem \ref{intro-thm-iji-jij}.
We use the following technical lemma.
%%%%%%%%%%%%%%%%%%%%
\begin{lem}\label{lem-ses-reduce}
Assume that there is the following exact sequence
\begin{align*}
V\oplus W \xto{\mathbf{A}}  V \oplus X \oplus Y \xto{(\rho\,\sigma\, \tau)} Z \to 0,
\qquad
\mathbf{A} = \scalebox{0.8}{$\begin{pmatrix}
\id & f \\
g & 0 \\
0 & h
\end{pmatrix}$}.
\end{align*}
Then it induces the following exact sequence
\begin{align*}
    W \xto{\mathbf{B}} X\oplus Y \xto{(\sigma\, \tau)} Z \to 0,
\qquad
\mathbf{B}=\scalebox{0.8}{$\begin{pmatrix}
-gf  \\
h  
\end{pmatrix}$}.
\end{align*}
\end{lem}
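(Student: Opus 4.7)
The plan is an elementary diagram chase, exploiting the identity block in the top-left corner of $\mathbf{A}$ to eliminate the summand $V$. I will verify three things in turn: that $(\sigma\ \tau)$ is surjective, that $(\sigma\ \tau)\mathbf{B}=0$, and that $\ker(\sigma\ \tau)\subseteq \im\mathbf{B}$.

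First I would note that for every $v\in V$, the element $\mathbf{A}(v,0)=(v,g(v),0)$ lies in $\im\mathbf{A}=\ker(\rho\ \sigma\ \tau)$, which gives the key identity $\rho(v)=-\sigma g(v)$. Dually, $\mathbf{A}(0,w)=(f(w),0,h(w))$ shows $\rho f(w)+\tau h(w)=0$, and combining this with the previous identity applied to $v=f(w)$ yields $\sigma gf(w)=\tau h(w)$, which is exactly the statement that $(\sigma\ \tau)\mathbf{B}=(\sigma\ \tau)\bigl(\begin{smallmatrix}-gf\\ h\end{smallmatrix}\bigr)=0$. For surjectivity of $(\sigma\ \tau)$, given $z\in Z$ pick $(v,x,y)$ with $\rho(v)+\sigma(x)+\tau(y)=z$ using the surjectivity of $(\rho\ \sigma\ \tau)$, then substitute $\rho(v)=-\sigma g(v)$ to obtain $z=\sigma(x-g(v))+\tau(y)$.

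For the remaining inclusion $\ker(\sigma\ \tau)\subseteq\im\mathbf{B}$, suppose $\sigma(x)+\tau(y)=0$. Then $(0,x,y)\in\ker(\rho\ \sigma\ \tau)=\im\mathbf{A}$, so there exist $v\in V$ and $w\in W$ with $\mathbf{A}(v,w)=(0,x,y)$. The top row of $\mathbf{A}$ forces $v=-f(w)$, and substituting into the middle and bottom rows gives $x=g(v)=-gf(w)$ and $y=h(w)$, so $(x,y)=\mathbf{B}(w)$, as required.

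There is no real obstacle; the whole argument is formal and the statement is designed so that the $\begin{pmatrix}\id & f\end{pmatrix}$-row of $\mathbf{A}$ cleanly trades the first coordinate for a correction term $-f(w)$ in $V$, propagating to $-gf(w)$ in $X$. The only thing I would be careful about is the sign in $\mathbf{B}$: it comes from $\rho(-f(w))=\sigma gf(w)$ rather than from a free choice, and it is what makes the second sequence a genuine complex.
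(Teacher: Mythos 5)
Your proof is correct, and it is precisely the "direct calculation" the paper invokes without writing out: the identity $\rho(v)=-\sigma g(v)$, the verification that $(\sigma\ \tau)\mathbf{B}=0$, surjectivity of $(\sigma\ \tau)$, and the chase showing $\ker(\sigma\ \tau)\subseteq\im\mathbf{B}$ are all carried out correctly, including the sign $-gf$. Nothing is missing.
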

%%%%%
\begin{proof}
This follows from a direct calculation.
\end{proof}
%%%%%%%%%%%%%%%%%%%%

The following theorem is a more precise statement of Theorem~\ref{intro-thm-iji-jij} for the functors $C_i$.

%%%%%%%%%%%%%%%%%%%%
\begin{thm}\label{thm-iji-jij}
Let $i,j\in I$ be loop-free vertices.
We have the following isomorphisms of functors.
\begin{enumerate}[{\rm (a)}]
    \item $C_i^{r_j \la}C_j^{\la} \cong C_j^{r_i \la}C_i^{\la}$ if there is no arrow between $i$ and $j$ in $Q$.
    \item $C_i^{r_{j}r_{i} \la}C_j^{r_i \la}C_i^{\la} \cong C_j^{r_{i}r_{j} \la}C_i^{r_j \la}C_j^{\la}$ if there is exactly one arrow between $i$ and $j$ in $Q$.
\end{enumerate}
\end{thm}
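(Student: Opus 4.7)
The plan is to establish (a) by direct inspection and (b) by unpacking the iterated cokernels and reducing via Lemma~\ref{lem-ses-reduce}.

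For (a), the key observation is that the cokernel construction at vertex $i$ depends only on $V_i$ together with the spaces $V_{t(a)}$ and the maps $V_{a^*}, V_a$ for arrows $a\in\bar Q_1$ with $h(a)=i$. When $i$ and $j$ are not adjacent in $Q$ (equivalently, in $\bar Q$), none of this data involves the vertex $j$, and symmetrically none of the data used to compute $C_j$ involves $i$. Thus applying $C_j$ does not alter any input to the construction of $C_i$, so $C_i^{r_j\la}C_j^{\la}(V)$ and $C_j^{r_i\la}C_i^{\la}(V)$ produce the same representation of $\Pi^{r_ir_j\la}$, with natural isomorphism induced by the identity on every component.

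For (b), let $b$ be the unique arrow between $i$ and $j$; without loss of generality $b\in Q_1$ with $b\colon i\to j$. Decompose
\[
(V_\oplus)_i = V_j \oplus X, \qquad (V_\oplus)_j = V_i \oplus Y,
\]
where $X,Y$ collect the contributions from the remaining neighbors of $i$ and $j$ and involve neither $V_i$ nor $V_j$. First I would apply $C_i^\la$ and write $C_i(V)_i=\Coker(\mu^V|_i\colon V_i\to V_j\oplus X)$, keeping track of the auxiliary factorization $\gamma$ from \eqref{dia-gamma}. Applying $C_j^{r_i\la}$ next, the map $\mu^{C_i(V)}|_j\colon V_j\to C_i(V)_i\oplus Y$ is determined by \eqref{V'-hom} via the cokernel map $c$ and $\gamma$, and by pulling the outer cokernel past the inner one, $C_jC_i(V)_j$ can be rewritten as the cokernel of an explicit presentation $V_j\oplus V_i\to(V_j\oplus X)\oplus Y$. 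A final application of $C_i^{r_jr_i\la}$ then presents $C_iC_jC_i(V)_i$ as the cokernel of a $3\times 3$ block matrix on $V_i\oplus V_j\oplus V_i$. The essential feature is that each step contributes an identity block coming from the $\mu\pi$ summand of $\mu\pi-\la_i\id$ in \eqref{dia-gamma}, so Lemma~\ref{lem-ses-reduce} can be applied iteratively to collapse the redundant internal copies of $V_i$ and $V_j$. The analogous computation for $C_jC_iC_j(V)$ produces a symmetric triple cokernel at vertex $j$; after simplification both sides are presented by the same cokernel of a map $V_i\oplus V_j\to X\oplus Y$ with matching connecting maps to the unchanged spaces $V_k$ for $k\ne i,j$. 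Naturality in $V$ then follows from functoriality of each $C_i$.

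The main obstacle will be bookkeeping: the signs $\ep(a)\in\{\pm 1\}$, the distinction between the roles of $\mu$, $\pi$ and $\gamma$, and the changing weights $\la\mapsto r_i\la\mapsto r_jr_i\la$ at each step must all be tracked so that after collapsing via Lemma~\ref{lem-ses-reduce} the reduced presentations on the two sides agree. The $\la$-dependent terms in \eqref{dia-gamma} are precisely what distinguishes the deformed case from the undeformed one treated in \cite{BK}, but they should combine cleanly with the identity blocks produced by $\mu\pi$ under Lemma~\ref{lem-ses-reduce} and reassemble into the defining relations of $\Pi^{r_ir_jr_i\la}=\Pi^{r_jr_ir_j\la}$.
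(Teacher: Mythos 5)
Your plan is essentially the paper's own proof: part (a) is treated there as immediate from the definition of $C_i$, and part (b) is proved exactly by writing the iterated cokernel presentations with respect to the splittings $V^i\oplus V_j$ and $V^j\oplus V_i$ (your $X\oplus V_j$ and $Y\oplus V_i$), collapsing the redundant summands with Lemma~\ref{lem-ses-reduce}, and matching the reduced presentations at the vertices $i$ and $j$. The only caveat is that the ``bookkeeping'' you defer --- checking that the arrow actions, not just the vertex spaces, coincide (in the paper this is the verification $\ep_2=\delta_1$ and $\ep_2'=-e_1'$ for arrows incident to $i$, $j$ and the connecting arrow, using the weight identities for $r_ir_j\la$), together with naturality in $V$ --- is where the bulk of the paper's computation lies, but it goes through along the lines you describe.
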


Note that since $r_i$ is the dual of $s_i$, for loop-free vertices $i,j$ of $Q$, if there is no arrow between $i$ and $j$ in $Q$, then $r_ir_j = r_jr_j$, and if there is exactly one arrow between $i$ and $j$ in $Q$, then $r_i r_j r_i = r_j r_i r_j$.

%%%%%
\begin{proof}
For simplicity, we write
$C_{i_1i_2i_3}:=C_{i_1}^{r_{i_2}r_{i_3}\la}C_{i_2}^{r_{i_3}\la} C_{i_3}^{\la}$ for loop-free vertices $i_1,i_2,i_3$ of $Q$.
The isomorphism (a) is clear by the definition of $C_i$. 
We show the statement (b).
Assume that there is exactly one arrow $\alpha$ from $j$ to $i$ in $Q$.
Let $V=(V_i,V_a)$ be a representation of $\Pi^{\la}$.
We use the following notations of vector spaces:
\[
V^i:=\bigoplus_{\substack{h(a)=i \\ a\neq \alpha}}V_{t(a)},
\qquad
V^j:= \bigoplus_{\substack{h(a)=j \\ a\neq \alpha^{\ast}}}V_{t(a)}.
\]
Then the sequence (\ref{seq-theta-phi}) for $i$ and $j$ can be described as follows:
\[
V_i \xto{{}^t\scalebox{0.6}{$\begin{pmatrix}X^i & V_{\alpha^{\ast}}\end{pmatrix}$}} V^i\oplus V_j \xto{(Y^i\, V_{\alpha})} V_i,
\qquad
V_j \xto{{}^t(X^j V_{\alpha})} V^j\oplus V_i \xto{(Y^j\, -V_{\alpha^{\ast}})} V_j.
\]
By the definitions of $C_j(V)$ and $C_i(V)$, we have the following commutative diagrams with exact rows:
\begin{equation}\label{ex-j-i}
\begin{tikzcd}[baseline=0]
    V_j \arrow[rr, "{}^t(X^j\,V_{\alpha})"] & & V^j \oplus V_i \arrow[r, "(c_1\,c_1')"] \arrow[d, "\Phi"'] & C_j(V)_j \arrow[r] \arrow[dl, "{}^t(\gamma_1\,\gamma_1')"] & 0\\
&& V^j \oplus V_i
\end{tikzcd},
\qquad
   \begin{tikzcd}[baseline=0]
    V_i \arrow[rr, "{}^t(X^i\,V_{\alpha^{\ast}})"] & & V^i \oplus V_j \arrow[r, "(c_2\,c_2')"] \arrow[d, "\Psi"'] & C_i(V)_i \arrow[r] \arrow[dl, "{}^t(\gamma_2\,\gamma_2')"] & 0\\
&& V^i \oplus V_j
\end{tikzcd},
\end{equation}
where $\Phi={}^t(X^j\,\,V_{\alpha})(Y^j\,\,\,-V_{\alpha^{\ast}})-\la_j\id$ and
$\Psi={}^t(X^i\,\,V_{\alpha^{\ast}})(Y^i\,\,\,V_{\alpha})-\la_i\id$.

By the definition of $C_{ij}(V)$ and $C_{ji}(V)$, we have the following commutative diagrams with exact rows:
\begin{equation}\label{ex-ij-ji}
\begin{tikzcd}[baseline=0]
    V_i \arrow[r, "{}^t(X^i\,-c_1')"] & V^i \oplus C_j(V)_j \arrow[r, "(d_1\,d_1')"] \arrow[d, "\Phi'"'] & C_{ij}(V)_i \arrow[r] \arrow[dl, "{}^t(\delta_1\,\delta_1')"] & 0\\
    & V^i \oplus C_j(V)_j
\end{tikzcd},
\qquad
   \begin{tikzcd}[baseline=0]
    V_j \arrow[r, "{}^t(X^j\,c_2')"] & V^j \oplus C_i(V)_i \arrow[r, "(d_2\,d_2')"] \arrow[d, "\Psi'"'] & C_{ji}(V)_j \arrow[r] \arrow[dl, "{}^t(\delta_2\,\delta_2')"] & 0\\
    & V^j \oplus C_i(V)_i
\end{tikzcd},
\end{equation}
where $\Phi'={}^t(X^i\,\,-c_1')(Y^i\,\,\,\gamma_1')-(r_j\la)_i\id$ and
$\Psi'={}^t(X^j\,\,c_2')(Y^j\,\,-\gamma_2')-(r_i\la)_j\id$.

By the definitions of $C_{jij}(V)$ and $C_{iji}(V)$, we have the following commutative diagrams with exact rows:
\begin{equation}\label{ex-jij-iji}
\begin{tikzcd}[baseline=0]
    C_j(V)_j \arrow[r, "{}^t(\gamma_1\,d_1')"] & V^j \oplus C_{ij}(V)_i \arrow[r, "(e_1\,e_1')"] \arrow[d, "\Phi''"'] & C_{jij}(V)_j \arrow[r] \arrow[dl, "{}^t(\ep_1\,\ep_1')"] & 0\\
    & V^j \oplus C_{ij}(V)_i
\end{tikzcd},
\quad
   \begin{tikzcd}[baseline=0]
    C_i(V)_i \arrow[r, "{}^t(\gamma_2\,-d_2')"] & V^i \oplus C_{ji}(V)_j \arrow[r, "(e_2\,e_2')"] \arrow[d, "\Psi''"'] & C_{iji}(V)_i \arrow[r] \arrow[dl, "{}^t(\ep_2\,\ep_2')"] & 0\\
    & V^i \oplus C_{ji}(V)_j
\end{tikzcd},
\end{equation}
where $\Phi''={}^t(\gamma_1\,\,d_1')(c_1\,\,\,-\delta_1')-(r_{i}r_{j}\la)_j\id$ and
$\Psi''={}^t(\gamma_2\,\,-d_2')(c_2\,\,\,\delta_2')-(r_{j}r_{i}\la)_i\id$.

We show that $C_{iji}(V) \cong C_{jij}(V)$ as left $\Pi^{\la'}$-modules, where $\la'=r_{i}r_{j}r_{i}\la=r_{j}r_{i}r_{j}\la$.
By combining the exact rows of the left hand diagrams in (\ref{ex-j-i}) and (\ref{ex-ij-ji}), we have the following exact sequence:
\begin{align}\label{ex-V''i}
    V_i \oplus V_j \xto{\mathbf{A}} V_i \oplus V^i \oplus V^j \xto{(d_1'c_1'\,\,d_1\,\,d_1'c_1)} C_{ij}(V)_i \to 0,
    \quad
    \mathbf{A}=\scalebox{0.8}{$\begin{pmatrix}
    -\id & V_{\alpha} \\
    \mathrm{X}^i & 0  \\
    0 & \mathrm{X}^j 
\end{pmatrix}$}.
\end{align}
By Lemma \ref{lem-ses-reduce}, we have
\begin{align}\label{ex-ij-i-short}
    V_j \xto{{}^t(X^iV_{\alpha}\, X^j) } V^i \oplus V^j \xto{(d_1\,\,d_1'c_1)} C_{ij}(V)_i \to 0.
\end{align}
By combining the exact rows of the right hand diagrams in (\ref{ex-ij-ji}) and (\ref{ex-jij-iji}), and by applying Lemma \ref{lem-ses-reduce}, we have the following exact sequence:
\begin{align}\label{ex-iji-i-short}
    V_j \xto{{}^t(\gamma_2 c_2'\,\,X^j)} V^i \oplus V^j \xto{(e_2\,\,e_2'd_2)} C_{iji}(V)_i \to 0.
\end{align}
By the right diagram of (\ref{ex-j-i}), we have $X^iV_{\alpha}=\gamma_2 c_2'$.
Thus we have $C_{ij}(V)_i=C_{iji}(V)_i$ and $(e_2\,\,e_2'd_2)=(d_1\,\,d_1'c_1)$.

Similarly,  $C_{iji}(V)_j=C_{jij}(V)_j$ holds, where this vector space is given by a cokernel of ${}^t(-X^jV_{\alpha^{\ast}}\,\,X^i) : V_i \to V^j\oplus V^i$.
We have $(e_1\,\,e_1'd_1)=(d_2\,\,d_2'c_2)$.

By the above argument, we have $C_{iji}(V)=C_{jij}(V)$ as vector spaces.
We next show that this equation is compatible with an action of $\Pi^{\la'}$.
Let $b\in\bar{Q}_1$.
We show that $C_{iji}(V)_b=C_{jij}(V)_b$ holds.
If $h(b), t(b)\not\in\{i,j\}$, then $C_{iji}(V)_b=V_b=C_{jij}(V)_b$ holds.

If $h(b)=i$ and $t(b)\neq j$, recall that $d_1=e_2$.
Thus we have $C_{iji}(V)_b=\ep(b)e_2\mu_b=\ep(b)d_1\mu_b=C_{ij}(V)_b=C_{jij}(V)_b$.
In a similar way, the equation holds if $h(b)=j$ and $t(b)\neq i$. 

Assume that $t(b)=i$ and $h(b)\neq j$.
Then $C_{iji}(V)_b=\pi_{b^{\ast}}\ep_2$ and $C_{jij}(V)_b=C_{ij}(V)_b=\pi_{b^{\ast}}\delta_1$ hold.
We have
\begin{align*}
    &\ep_2e_2=\gamma_2c_2-(r_{j}r_{i}\la)_i\id=X^iY^i-\la_i\id-(r_{j}r_{i}\la)_i\id=X^iY^i-(r_j\la)_i\id=\delta_1d_1,\\
    &\ep_2e_2'd_2=\gamma_2\delta_2'd_2=\gamma_2c_2'Y^j=X^iV_{\alpha}Y^j=X^i\gamma_1'c_1=\delta_1d_1'c_1.
\end{align*}
Therefore $\ep_2(e_2\,\,e_2'd_2)=\delta_1(d_1\,\,d_1'c_1)$.
Since $(e_2\,\,e_2'd_2)=(d_1\,\,d_1'c_1)$ is surjective, we have $\ep_2 = \delta_1$.
Thus $C_{iji}(V)_b=\pi_{b^{\ast}}\ep_2=\pi_{b^{\ast}}\delta_1=C_{jij}(V)_b$.
In a similar way, the equation holds if $t(b)=j$ and $h(b)\neq i$. 

Assume that $b=\alpha^{\ast}$.
Then $C_{iji}(V)_b=\ep_2'$ and $C_{jij}(V)_b=-e_1'$.
Since $(e_1\,e_1'd_1)=(d_2\,d_2'c_2)$, we have
\begin{align*}
    \ep_2'e_2 &=-d_2'c_2=-e_1'd_1\\
    \ep_2'e_2'd_2& =(-d_2'\delta_2'-(r_{j}r_{i}\la)_i\id)d_2=-d_2'c_2'Y^j-(r_{j}r_{i}\la)_id_2\\
    &=d_2(X^jY^j-(r_{j}r_{i}\la)_i\id)=e_1(X^jY^j-\la_j\id)\\ &=e_1\gamma_1c_1=-e_1'd_1'c_1.
\end{align*}
Namely, $\ep_2'(e_1\,e_1'd_1)=-e_1'(d_2\,d_2'c_2)$.
Therefore $\ep_2'=-e_1'$.
In a similar way, the equation holds if $b=\alpha$.
This completes the proof that $C_{iji}(V)=C_{jij}(V)$ as $\Pi^{\la'}$-modules.

Finally we show that this identification is a morphism of functors, that is, if $f: U \to V$ is a morphism of $\Pi^{\la}$-modules, then $C_{iji}(f)=C_{jij}(f)$.
Let $k$ be a vertex of $Q$.
If $k\neq i,j$, then $C_{iji}(f)_k=f_k=C_{jij}(f)_k$.
If $k=i$, then $C_{jij}(f)_i=C_{ij}(f)_i$.
Since $C_{ij}(f)_i$ is induced from (\ref{ex-ij-i-short}) and $C_{iji}(f)_i$ is induced from (\ref{ex-iji-i-short}), we have $C_{ji}(f)_i=C_{jij}(f)_i$.
In a similar way, we have $C_{iji}(f)_j=C_{jij}(f)_j$.
This completes the proof of the theorem.
\end{proof}
%%%%%%%%%%%%%%%%%%%%

%%%%%%%%%%%%%%%%%%%%
\begin{proof}[Proof of Theorem \ref{intro-thm-iji-jij}]
The relations on the $C_i$ are shown in the previous theorem.
The relations on the $K_i$ follow, since $K_i$ is right adjoint to $C_i$.
\end{proof}

%%%%%%%%%%%%%%%%%%%%
%%%%%%%%%%%%%%%%%%%%
%%%%%%%%%%%%%%%%%%%%
\section{Tilting ideals for 2-Calabi-Yau algebras}
%%%%%%%%%%%%%%%%%%%%
%%%%%%%%%%%%%%%%%%%%
\subsection{Preliminaries}
\label{sec-tilt-prelim}
We denote by $\sD_{\Fd}(\Mod A)$ the triangulated subcategory of the derived category $\sD(\Mod A)$ which consists of the complexes whose total homology is a finite-dimensional $A$-module.

\begin{pro}\label{pro-2-CY-preproj}
If $A$ is a $d$-Calabi-Yau algebra, then 
\begin{enumerate}[{\rm (a)}]
\item 
There is a functorial isomorphism $\kD\Hom_{\sD(\Mod A)}(M, N) \cong \Hom_{\sD(\Mod A)}(N, M[d])$ for $M$ in $\sD_{\Fd}(\Mod A)$ and $N$ in $\sD(\Mod A)$. 
\item
In particular $\kD\Ext^i_A (M, N) \cong \Ext^{d-i}_A(N, M)$ for $M$ a finite-dimensional $A$-module and $N$ any $A$-module. 
\item 
$\Ext_A^i(M,A)=0$ if $i\neq d$ and $\kD \Ext_A^d(M,A)\cong M$ for a finite-dimensional $A$-module $M$.
\end{enumerate}
\end{pro}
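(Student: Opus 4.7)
The plan is to prove (a) first and derive (b) and (c) as immediate consequences. For (b), I would extract the degree-$i$ part of (a) after replacing $N$ by $N[i]$: since $\Hom_{\sD(\Mod A)}(M, N[i]) = \Ext^i_A(M, N)$ and $\Hom_{\sD(\Mod A)}(N[i], M[d]) = \Ext^{d-i}_A(N, M)$, (a) directly gives $\kD\Ext^i_A(M,N) \cong \Ext^{d-i}_A(N, M)$. For (c), specialize (b) to $N = A$: since $A$ is projective as a left $A$-module, $\Ext^{d-i}_A(A, M) = 0$ for $d - i \ne 0$ and equals $M$ for $i = d$; the vanishing of the dual forces vanishing of $\Ext^i_A(M, A)$ itself (as $\kD V = 0$ over a field implies $V = 0$), and $\kD\Ext^d_A(M, A) \cong M$ is immediate.

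For (a), the strategy is to establish a Calabi-Yau duality isomorphism $\kD\RHom_A(M, N) \cong \RHom_A(N, M)[d]$ and then pass to $H^0$. Write $A^e = A \otimes_K A^{\op}$. The ingredients are a finite resolution $P_\bullet \to A$ by finitely generated projective bimodules (from homological smoothness) and the Calabi-Yau hypothesis in the form $P_\bullet^{\vee} \simeq A[-d]$ in $\sD(\Mod A^e)$, where $P_\bullet^{\vee} = \Hom_{AA}(P_\bullet, A\otimes_K A)$ is the bimodule dual. The core chain I plan to assemble is
\[
\RHom_A(M, N) \cong \RHom_{AA}(A, \Hom_K(M, N)) \cong (A \Ltens_{A^e} \Hom_K(M, N))[-d] \cong (\kD M \Ltens_A N)[-d].
\]
The first isomorphism comes from the tensor-Hom adjunction $\Hom_A(B \otimes_A M, N) \cong \Hom_{AA}(B, \Hom_K(M, N))$ for any $A$-bimodule $B$, where $\Hom_K(M, N)$ is a bimodule via $(afc)(m) = af(cm)$; I will apply this to $P_\bullet$ and use that finite-dimensionality of $M$ lets one replace $\RHom_K$ by $\Hom_K$. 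The second encodes the Calabi-Yau property: since $(P_\bullet^{\vee})^{\vee} \cong P_\bullet$ for the perfect complex $P_\bullet$, one has $\RHom_{AA}(A, B) \simeq \Hom_{AA}(P_\bullet, B) \cong P_\bullet^{\vee} \Ltens_{A^e} B \simeq (A \Ltens_{A^e} B)[-d]$. The third combines $\Hom_K(M, N) \cong \kD M \otimes_K N$ (valid since $M$ is finite-dimensional) with the standard identity $A \Ltens_{A^e}(X \otimes_K Y) \cong X \Ltens_A Y$ for a right $A$-module $X$ and a left $A$-module $Y$. Applying $\kD$ and using $\kD(\kD M \Ltens_A N) \cong \RHom_A(N, M)$ (tensor-Hom adjunction combined with $\kD\kD M \cong M$) completes the derivation of the duality.

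The main obstacle is disciplined bookkeeping of the bimodule structures (inner versus outer, left versus right) through the chain, and verifying that each Hom/tensor interchange is valid at the derived level and compatible with the Calabi-Yau input. Once these identifications are made cleanly, naturality in $M$ and $N$ is inherited from each ingredient, yielding the functorial form of (a).
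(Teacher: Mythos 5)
Your proposal is correct, and it is in substance a proof of the result that the paper outsources: the paper's argument for (a) is just a citation of Keller's Lemma~4.1 in \cite{Kel08}, with (b) and (c) noted as special cases, whereas you reconstruct the content of that lemma directly from homological smoothness and the Calabi--Yau isomorphism. Your chain $\RHom_A(M,N)\cong\RHom_{AA}(A,\Hom_K(M,N))\cong (A\otimes^{\mbL}_{A\otimes_K A^{\op}}\Hom_K(M,N))[-d]\cong (\kD M\otimes^{\mbL}_A N)[-d]$, followed by $K$-dualization and taking $H^0$, is exactly the mechanism behind Keller's lemma, and your deductions of (b) (shift $N$) and (c) (take $N=A$, use that $\kD V=0$ forces $V=0$) agree with the paper's ``special cases''. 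The one point you should make explicit is the passage in (a) from $M$ a finite-dimensional module to a general object of $\sD_{\Fd}(\Mod A)$: your identifications $\Hom_K(M,N)\cong \kD M\otimes_K N$ and $\kD\kD M\cong M$ use finite-dimensionality of the terms of $M$, so for a complex with merely finite-dimensional total cohomology you should either replace $M$ by a bounded complex of finite-dimensional modules (possible since homological smoothness gives finite global dimension) or first construct the natural map and then argue by d\'evissage along the truncation triangles of $M$; this is routine, and only the module-level statements (b) and (c) are used elsewhere in the paper. What your route buys is self-containedness at the cost of the bimodule bookkeeping you mention; what the citation buys is brevity and validity in the general dg setting treated by Keller.
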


\begin{proof}
For part (a) use \cite[Lemma 4.1]{Kel08}. Parts (b) and (c) are special cases.
\end{proof}

Proposition \ref{pro-ial-props} is a special case of the following result.

\begin{pro}\label{pro-I-nCY}
Let $A$ be a $d$-Calabi-Yau algebra with $d\geq 2$, let $S$ be a finite-dimensional simple left $A$-module and let $I=\Ann_A(S)$.
\begin{enumerate}[{\rm (i)}]
\item
As a left $A$-module, $A/I$ is isomorphic to a finite direct sum of copies of $S$, and as a right $A$-module it is isomorphic to a finite direct sum of copies of $\kD(S)$.
\item
If $\Ext_A^1(S,S)=0$, then $I$ is a faithfully balanced $A$-bimodule, that is, the natural maps
$A\to \End(I_A)$ and $A^{op}\to \End({}_A I)$ given by the homotheties of left and right multiplication, are isomorphisms.
\item
If $\Ext_A^i(S,S)=0$ for $i=1,2,\dots, d-1$, then as a left $A$-module or as a right $A$-module, $I$ is a $(d-1)$-tilting module.
\end{enumerate}
\end{pro}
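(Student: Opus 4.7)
The strategy is to handle (i), (ii), (iii) in turn, using Proposition~\ref{pro-2-CY-preproj} as the main Calabi-Yau input.

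For part (i), I would invoke Jacobson density or Artin-Wedderburn. Since $S$ is finite-dimensional, $D := \End_A(S)$ is a finite-dimensional division $K$-algebra and $A/I$, which acts faithfully on $S$, is isomorphic to $M_n(D^{\op})$ with $n=\dim_D S$. As a left module over itself this matrix algebra decomposes as $n$ copies of $S$. Since $\kD$ is a contravariant equivalence between finite-dimensional left and right $A/I$-modules that sends simples to simples, $\kD(S)$ is the unique simple right $A/I$-module, and a dimension count gives $A/I \cong \kD(S)^n$ on the right.

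For part (ii), I would apply $\Hom_{A^{\op}}(-,A)$ to the short exact sequence of right $A$-modules $0 \to I \to A \to A/I \to 0$. By Proposition~\ref{pro-2-CY-preproj}(c) applied to the finite-dimensional $A/I$, both $\Hom_{A^{\op}}(A/I,A)$ and $\Ext^1_{A^{\op}}(A/I,A)$ vanish, so the long exact sequence collapses to an isomorphism $A \isoto \Hom_{A^{\op}}(I,A)$ sending $a$ to the map $i \mapsto ai$. Because $I$ is a two-sided ideal we have $ai \in I$, so every such map factors through $I$; hence $\End(I_A) = \Hom_{A^{\op}}(I,A) \cong A$. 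The symmetric argument applied to left modules yields $A^{\op} \cong \End({}_A I)$.

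For part (iii), condition (1) follows immediately: Proposition~\ref{pro-2-CY-preproj}(c) implies that the finite-dimensional module $A/I$ has projective dimension at most $d$, so $I$ has projective dimension at most $d-1$ by the defining sequence, and homological smoothness of $A$ allows us to choose the resolution by finitely generated projectives. For condition (2), the plan is to apply $\Hom_A(I,-)$ to the defining sequence, reducing the computation of $\Ext^i_A(I,I)$ to those of $\Ext^i_A(I,A)$ and $\Ext^i_A(I,A/I)$. Calabi-Yau duality gives $\Ext^i_A(I,A) = 0$ for $1 \le i \le d-2$ and $\Ext^{d-1}_A(I,A) \cong \Ext^d_A(A/I,A) \cong \kD(A/I)$; applying $\Hom_A(-,A/I)$ and using $A/I \cong S^n$ together with the hypothesis $\Ext^i_A(S,S) = 0$ for $1 \le i \le d-1$ gives $\Ext^i_A(I,A/I) = 0$ for $0 \le i \le d-2$ (for $i=0$ one observes that any map $A \to A/I$ vanishes on $I$), and $\Ext^{d-1}_A(I,A/I) \cong \Ext^d_A(A/I,A/I) \cong \kD \End_A(A/I) \cong \kD(A/I)$ via the isomorphism $\End_A(A/I) \cong A/I$ given by evaluation at the class of $1$. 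The key observation is that the map $\Ext^{d-1}_A(I,A) \to \Ext^{d-1}_A(I,A/I)$ induced by $A \to A/I$ is Calabi-Yau dual to this evaluation isomorphism, hence itself an isomorphism, and the long exact sequence then forces $\Ext^i_A(I,I) = 0$ for $1 \le i \le d$; higher Ext's vanish by the projective dimension bound.

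For condition (3), I would symmetrize. Since $A^{\op}$ is also $d$-Calabi-Yau and $\kD$ identifies $\Ext^i_A(S,S)$ with $\Ext^i_{A^{\op}}(\kD S, \kD S)$, the hypothesis on $\Ext^i(S,S)$ transfers to the right-module setting, and moreover $I = \Ann_{A^{\op}}(\kD S)$. The right-module analogues of (1) and (2) thus give a finitely generated projective resolution $0 \to P'_{d-1} \to \cdots \to P'_0 \to I \to 0$ of $I$ as a right $A$-module, with $\Ext^i_{A^{\op}}(I,I) = 0$ for $i \ge 1$ and $\Hom_{A^{\op}}(I,I) \cong A$. Applying $\Hom_{A^{\op}}(-,I)$ to this resolution produces a complex of left $A$-modules with terms $\Hom_{A^{\op}}(P'_i,I) \cong I^{n_i} \in \add I$, whose cohomology is $A$ in degree $0$ and vanishes elsewhere; this is exactly the required coresolution $0 \to A \to T^0 \to \cdots \to T^{d-1} \to 0$. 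The main obstacle is the bookkeeping in the long exact sequences for (2); once the evaluation isomorphism $\End_A(A/I) \cong A/I$ and its Calabi-Yau dual are identified, the symmetrization trick for (3) is clean.
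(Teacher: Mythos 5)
Your proposal is correct, and while (i) and the final coresolution step of (iii) coincide with the paper's own argument, two steps take a genuinely different route. In (ii) the paper uses $\Ext^1_A(S,S)=0$ to prove $\Hom_A(I,A/I)=0$ and hence that $\Hom_A(I,I)\to\Hom_A(I,A)$ is bijective; you instead note that the vanishing of $\Ext^i(A/I,A)$ for $i=0,1$ makes every one-sided homomorphism $I\to A$ the restriction of a homothety of $A$, which automatically carries the two-sided ideal $I$ into itself. This bypasses rigidity entirely, so your argument in fact establishes (ii) for an arbitrary finite-dimensional simple $S$; what the paper's route buys is the auxiliary identification $\Hom_A(I,A/I)\cong\Ext^1_A(A/I,A/I)$, which it exploits again in (iii). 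For the vanishing of $\Ext^{>0}_A(I,I)$ the paper has the three-step chain $\kD\Ext^i_A(I,I)\cong\kD\Ext^{i+1}_A(A/I,I)\cong\Ext^{d-i-1}_A(I,A/I)\cong\Ext^{d-i}_A(A/I,A/I)=0$, i.e.\ it applies Proposition~\ref{pro-2-CY-preproj}(b) with the finite-dimensional module $A/I$ in the first slot and the (large) module $I$ in the second; you instead run the long exact sequence of $\Hom_A(I,-)$ along $0\to I\to A\to A/I\to 0$, which is longer and forces the extra analysis at $i=d-1$ via your ``key observation'' that $\Ext^{d-1}_A(I,A)\to\Ext^{d-1}_A(I,A/I)$ is an isomorphism---this is fine, but note that it rests on the functoriality of the duality in Proposition~\ref{pro-2-CY-preproj}(a), which identifies that map with the $\kD$-dual of the bijection $\Hom_A(A/I,A/I)\to\Hom_A(A,A/I)$. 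Two points to tighten: $\Ext^i_A(A/I,A)=0$ for $i\neq d$ does not by itself bound the projective dimension of $A/I$; first record, as the paper does, that homological smoothness forces finite global dimension (at most the length of the bimodule resolution), after which Proposition~\ref{pro-2-CY-preproj}(c) pins the dimension down. And the passages to the other side---that $A^{op}$ is again $d$-Calabi-Yau and that $\Ext^i_{A^{op}}(\kD S,\kD S)\cong\Ext^i_A(S,S)$---are standard but should be stated explicitly, playing the role of the paper's repeated appeal to ``symmetry''.
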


\begin{proof}
(i) Use that $A/I$ embeds in $\End_K(S) \cong S\otimes_K \kD(S)$.

(ii) By the $d$-Calabi-Yau property, we have $\kD \Ext_A^i(A/I, A) \cong \Ext_A^{d-i}(A,A/I)=0$ for $i<d$.
It follows that the restriction map $\Hom_A(A,A)\to \Hom_A(I,A)$ is an isomorphism.
Since $\Ext^1(S, S)=0$, the restriction map $\Hom_A(A, A/I) \to \Hom_A(I, A/I)$ is surjective,
but since the natural map $\Hom_A(A/I,A/I)\to \Hom_A(A,A/I)$ is onto, we have $\Hom_A(I, A/I)=0$.
This implies that the natural map $\Hom_A(I,I)\to \Hom_A(I, A)$ is an isomorphism.
This gives an isomorphism $A^{op}\to \End({}_A I)$. The other isomorphism follows by symmetry.

(iii) Since $A$ is $d$-Calabi-Yau, the global dimension of $A$ is $d$.
Therefore $I$ has a projective dimension at most $d-1$ on both sides.
It is equal to $d-1$ since
\[
\Ext^{d-1}_A(I,S) \cong \Ext^d_A(A/I,S) \cong \kD \Hom_A(S,A/I) \neq 0.
\]
We show that ${}_A I$ admits a projective resolution with finitely generated projective $A$-modules. Since $A$ is homologically smooth, it has a finite projective resolution by finitely generated $A$-bimodules. By applying $(-)\otimes_A A/I$ to a bimodule resolution of $A$, $A/I$ has a finite projective resolution by finitely generated projective $A$-modules. This implies that ${}_A I$ has a projective resolution with finitely generated projective $A$-modules by Schanuel's lemma, using, for example \cite[Lemma 2.5]{Kimura20}. By symmetry also $I_A$ has a projective resolution with finitely generated projective right $A$-modules.

Next we show that $\Ext_A^i(I,I)=0$ for any $i>0$.
For $i=1,\dots,d-1$, we have
\[
\kD \Ext_A^i(I, I) \cong \kD \Ext_A^{i+1}(A/I,I) \cong \Ext_A^{d-i-1}(I,A/I) \cong \Ext_A^{d-i}(A/I, A/I)=0
\]
where the second isomorphism comes from the $d$-Calabi-Yau property and the last equality comes from the condition that $\Ext_A^i(S, S)=0$ for $i=1,2,\dots, d-1$.
Since the projective dimension of $I$ is at most $d-1$, $\Ext_A^i(I,I)=0$ for any $i>0$.
By symmetry also $\Ext^i(I_A, I_A)=0$ for $i>0$.

Finally, we show that $A$ admits a finite coresolution by modules in $\add({}_A I)$.
Let $0 \to P_{d-1} \to \dots \to P_0 \to I \to 0$ be resolution of $I$ by finitely generated projective right $A$-modules. Applying the functor $\Hom(-,I_A)$ to this resolution, using that $\End(I_A) \cong A$, that $\Ext^i(I_A, I_A)=0$ for $i>0$, and $\Hom(P_i,I_A) \in \add({}_A I)$, we obtain the desired exact sequence.
By symmetry also $A$ admits a finite coresolution by modules in $\add(I_A)$.
\end{proof}

The following lemma is standard. See for example \cite[Lemma III.1.1]{BIRS} in the Krull-Schmidt case and \cite[Lemma 2.8]{SY13} in the tilting module case.

\begin{lem}
\label{lem-SY-Tor}
If $T$ is a partial tilting $A$-module and $S$ is a finite-dimensional simple right $A$-module, then at least one of $S\otimes_A T=0$ or $\Tor_1^A(S,T)=0$ holds.
\end{lem}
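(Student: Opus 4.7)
Plan: I would first dualize the statement using $\kD = \Hom_K(-,K)$. For a finite-dimensional right $A$-module $S$ and any left $A$-module $N$, there is a natural isomorphism $\kD(S \otimes_A N) \cong \Hom_A(N, \kD S)$, and applying this term by term to a projective resolution of $T$ yields $\kD \Tor^A_i(S, T) \cong \Ext^i_A(T, \kD S)$ for all $i$. Writing $S' = \kD S$, which is a finite-dimensional simple left $A$-module, the lemma reduces to the dual statement: \emph{if $\Hom_A(T, S') \neq 0$, then $\Ext^1_A(T, S') = 0$}.

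Starting from a nonzero $\phi : T \to S'$, which must be surjective since $S'$ is simple, I would fix the short exact sequence $0 \to P_1 \xto{d} P_0 \xto{\pi} T \to 0$ guaranteed by the definition of partial tilting and set $\phi_0 = \phi \pi : P_0 \to S'$; this is surjective and satisfies $\phi_0 d = 0$. The target $\Ext^1_A(T, S')$ is the cokernel of $d^*:\Hom_A(P_0, S') \to \Hom_A(P_1, S')$, so the goal becomes showing that $d^*$ is surjective.

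The main argument is a diagram chase combining projectivity of $P_0$ and $P_1$ with the hypothesis $\Ext^1_A(T, T) = 0$. Given $\eta_1 \in \Hom_A(P_1, S')$, first lift it through the surjection $\phi_0$ to $\tilde\eta : P_1 \to P_0$ with $\phi_0 \tilde\eta = \eta_1$, using projectivity of $P_1$. The composite $\pi \tilde\eta : P_1 \to T$ then factors as $\psi_0 d$ for some $\psi_0 : P_0 \to T$, because $\Ext^1_A(T, T) = 0$ makes the analogous $d^*$ with coefficients in $T$ surjective. A further lift of $\psi_0$ along $\pi$, say $\tilde\psi : P_0 \to P_0$ with $\pi \tilde\psi = \psi_0$, exists by projectivity of $P_0$. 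Then $\pi(\tilde\psi d - \tilde\eta) = 0$, so $\tilde\psi d - \tilde\eta$ lies in $\ker \pi = d(P_1)$, and since $d$ is injective one writes $\tilde\psi d - \tilde\eta = d\tau$ for a unique $\tau: P_1 \to P_1$. Composing with $\phi_0$ and using $\phi_0 d = 0$ yields $\eta_1 = \phi_0 \tilde\eta = (\phi_0 \tilde\psi) d \in \Im(d^*)$.

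None of the individual steps is deep: the main obstacle is the bookkeeping of the diagram chase, in particular noting at the end that $\phi_0 d = 0$ kills the stray $d\tau$ term. The point of the initial duality reduction is to avoid having to pass to a minimal projective presentation and argue via common indecomposable summands of $P_0$ and $P_1$, which would require semi-perfectness or similar hypotheses on $A$ that are not automatic in the generality considered here.
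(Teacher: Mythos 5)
Your proof is correct and follows essentially the same route as the paper: dualize via $\kD$ so that the statement becomes ``if $\Hom_A(T,\kD S)\neq 0$ then $\Ext^1_A(T,\kD S)=0$'', use simplicity to get a surjection $T\to \kD S$, and then exploit the partial tilting hypotheses. The only difference is cosmetic: where the paper applies $\Hom_A(T,-)$ to $0\to\ker\phi\to T\to\kD S\to 0$ and quotes $\Ext^1_A(T,T)=0$ together with $\projdim T\le 1$ in the long exact sequence, you carry out the same homological step by an explicit cocycle-level diagram chase on the presentation $0\to P_1\to P_0\to T\to 0$.
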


\begin{proof}
Since $\kD(S)$ is a simple left module, either $\Hom_A(T,\kD(S))=0$ or there is a surjection $T\to \kD(S)$. In the second case, applying $\Hom_A(T,-)$ and using that $T$ is partial tilting, one obtains that $\Ext^1_A(T,\kD(S))=0$. Rewriting in terms of tensor products and $\Tor$ gives the result.
\end{proof}

\subsection{Tilting ideals}
\label{sec-tilt-ideals}
For an $A$-module $S$, let $I_S=\Ann_A(S)$.
In the proof of the following proposition, we refer the proof of \cite[Proposition III.1.5]{BIRS}.

\begin{pro}\label{pro-IT-tilting}
Let $A$ be 2-Calabi-Yau, let $T$ be a tilting module for $A$ and let $S$ be a finite-dimensional rigid simple $A$-module.
\begin{enumerate}[{\rm (a)}]
    \item We have $I_S \otimes_A^{\mbL}T \cong I_S \otimes_A T$ in the derived category of $A$-modules.
    \item If $\Tor_1^A(\kD(S), T)=0$, then the natural map $I_S \otimes_A T \to I_S T$ is an isomorphism.
    %, and this is a tilting module for $\Pi$.
    \item $I_S T$ is a tilting module for $A$ and $\End_A(I_S T) \cong \End_A(T)$.
\end{enumerate}
\end{pro}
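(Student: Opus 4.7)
The plan is as follows. For parts (a) and (b), the basic tool is the long exact sequence of Tor associated to the short exact sequence $0 \to I_S \to A \to A/I_S \to 0$ of right $A$-modules, applied with $\otimes_A T$. For (a), since $T$ is tilting its flat dimension is at most one, so $\Tor_i^A(-, T)$ vanishes for every $i \ge 2$. The long exact sequence then gives $\Tor_1^A(I_S, T) \cong \Tor_2^A(A/I_S, T) = 0$, which is precisely the vanishing required for $I_S \otimes_A^{\mathbf L} T$ to be concentrated in degree zero. For (b), Proposition~\ref{pro-I-nCY}(i) identifies $A/I_S$ as a finite direct sum of copies of $\kD(S)$ as a right $A$-module, so the hypothesis propagates to $\Tor_1^A(A/I_S, T) = 0$. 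The long exact sequence collapses to $0 \to I_S \otimes_A T \to T \to A/I_S \otimes_A T \to 0$, and the image of $I_S \otimes_A T \to T$ is by definition $I_S T$.

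For (c), apply Lemma~\ref{lem-SY-Tor} to the simple right $A$-module $\kD(S)$. If $\kD(S) \otimes_A T = 0$ then $A/I_S \otimes_A T = 0$, forcing $I_S T = T$ and the claim is trivial. Otherwise $\Tor_1^A(\kD(S), T) = 0$, so by (b) we have $I_S T \cong I_S \otimes_A T$ together with a short exact sequence $0 \to I_S T \to T \to S^m \to 0$, the cokernel being annihilated by $I_S$ and hence, by Proposition~\ref{pro-I-nCY}(i), a finite direct sum of copies of $S$. Applying $\Hom_A(-, N)$ to this sequence and combining $\Ext^2_A(T, N) = 0$ (from $T$ being tilting) with $\Ext^3_A(S, N) = 0$ (from $A$ having global dimension two by the 2-Calabi-Yau property) yields $\Ext^2_A(I_S T, N) = 0$ for every $N$, which is tilting condition~(1) for $I_S T$.

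For the remaining conditions and the endomorphism-ring identification, I would exploit the fact that $I_S$ is a two-sided tilting bimodule: by Proposition~\ref{pro-ial-props} it is a tilting left $A$-module with $\End_A(I_S) \cong A^{\op}$, and the argument in Proposition~\ref{pro-I-nCY}(iii) applied on the other side gives the analogous identification. Hence $F := I_S \otimes_A^{\mathbf L}(-) \colon \sD(\Mod A) \to \sD(\Mod A)$ is an autoequivalence sending $A$ to $I_S$. By (a), $F(T) \cong I_S T$ is concentrated in degree zero, so $F$ transports Ext-groups, giving $\Ext^i_A(I_S T, I_S T) \cong \Ext^i_A(T, T) = 0$ for $i > 0$ (tilting condition~(2)) and $\End_A(I_S T) \cong \End_A(T)$. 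Applying $F$ to the coresolution $0 \to A \to T^0 \to T^1 \to 0$ also yields, using (a), an exact sequence $0 \to I_S \to I_S T^0 \to I_S T^1 \to 0$ with $I_S T^i \in \add(I_S T)$, which is a coresolution of $I_S$ rather than of $A$. The main obstacle I anticipate is thus the last step, tilting condition~(3): I would address it by splicing this coresolution of $I_S$ with $0 \to I_S \to A \to A/I_S \to 0$ and invoking the fact that $I_S T$, as $F(T)$, is a tilting complex concentrated in degree zero, so that the coresolution property of $T$ for $A$ transports (up to a possible two-step analysis forced by the shift of the regular module under $F^{-1}$) to a coresolution of $A$ by objects in $\add(I_S T)$.
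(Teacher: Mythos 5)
Your parts (a) and (b) are correct and coincide with the paper's argument: the same short exact sequence $0\to I_S\to A\to A/I_S\to 0$, the vanishing $\Tor_1^A(I_S,T)\cong\Tor_2^A(A/I_S,T)=0$ from $\operatorname{pd}T\le 1$, and the identification of $A/I_S$ with copies of $\kD(S)$ via Proposition~\ref{pro-I-nCY}(i). In (c), the dichotomy via Lemma~\ref{lem-SY-Tor}, the bound $\operatorname{pd}(I_ST)\le 1$, and the transport of $\Ext$-groups and of the endomorphism ring through the autoequivalence $I_S\otimes_A^{\mbL}-$ (legitimate, since $I_S$ is a tilting bimodule with $\End_A({}_AI_S)\cong A^{\op}$ and no higher self-extensions) are sound, and are essentially the content of the paper's citation of \cite[Lemma III.1.2(a)]{BIRS}, which asserts that $I_S\otimes_A^{\mbL}T$ is a tilting complex.

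The genuine gap is exactly where you flag an ``obstacle'': tilting condition (3) for $I_ST$ is never established, and the repair you sketch would not work. Applying $I_S\otimes_A-$ to $0\to A\to T^0\to T^1\to 0$ yields a coresolution $0\to I_S\to I_ST^0\to I_ST^1\to 0$ of $I_S$, and splicing this with $0\to I_S\to A\to A/I_S\to 0$ points the wrong way: that sequence exhibits $A$ as an extension with submodule $I_S$ and quotient a sum of copies of $S$, which does not help embed $A$ into a module in $\add(I_ST)$ with cokernel in $\add(I_ST)$. A workable explicit construction would instead start from condition (3) for the tilting module ${}_AI_S$ itself (Proposition~\ref{pro-I-nCY}(iii)), namely $0\to A\to J^0\to J^1\to 0$ with $J^i\in\add I_S$, and splice in the coresolution of $I_S$ above; but this produces a length-two coresolution of $A$, and shortening it to length one (as 1-tilting requires) needs a further argument you do not supply. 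The paper avoids all of this: $I_S\otimes_A^{\mbL}T$ is a tilting complex by \cite[Lemma III.1.2(a)]{BIRS}, it is concentrated in degree zero by (a) and (b), and it has projective dimension at most one, and a tilting complex which is a module of projective dimension at most one is a tilting module; condition (3) then comes from the generation property of the tilting complex, not from a splice. Relatedly, your assertion that $\Ext^2_A(I_ST,N)=0$ for all $N$ ``is tilting condition (1)'' is imprecise: condition (1) demands a resolution by modules in $\add A$ (finitely generated projectives), which follows not from the Ext-vanishing alone but from the fact that $I_S\otimes_A^{\mbL}T$ is a perfect complex concentrated in degree zero; this should be said explicitly.
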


\begin{proof}
(a) We have a short exact sequence 
\begin{align}\label{ses-I-P-P/I}
0 \to I_S \to A \to A/I_S \to 0.
\end{align}
Applying $-\otimes_A T$ to this short exact sequence, we have $\Tor_1^A(I_S, T)\cong \Tor_2^A(A/I_S,T)=0$, giving the result.

(b) We have an exact sequence 
\[
\Tor_1^A(A/I_S, T) \to I_S\otimes_A T \to T \to (A/I_S)\otimes_A T \to 0,
\]
and $A/I_S$ is isomorphic to a finite direct sum of copies of $\kD(S)$ as a right $A$-module, so if $\Tor_1^A(\kD(S), T)=0$, then the map $I_S\otimes_A T \to I_S T$ is an isomorphism.

(c) By Lemma \ref{lem-SY-Tor} we have $\kD(S)\otimes_A T=0$ or $\Tor_1^A(\kD(S), T)=0$. In the first case, $I_S T=T$ and the assertion is trivial. Thus we may assume that $\Tor_1^A(\kD(S), T)=0$. Now the projective dimension of $T$ is at most one and the projective dimension of $T/I_S T$ is at most two, since the global dimension of $A$ is two. Thus the projective dimension of $I_S T$ is at most one. By (a) and (b) we have $I_S T \cong I_S \otimes_A^{\mbL}T$. Since $I_S$ is a tilting ideal and $T$ is a tilting module, this is tilting complex; see \cite[Lemma III.1.2(a)]{BIRS}. Now the condition on projective dimension implies that $I_S T$ is a tilting module.
\end{proof}

\begin{proof}[Proof of Theorem~\ref{thm-set-i-equiv}]
We show by induction on $r$ that $I_{S_1 S_2\dots S_r}=I_{S_1}I_{S_2}\cdots I_{S_r}$ is a tilting ideal of finite codimension in $A$.
This is clear for $r=1$. If we know it for $r-1$, then
$I_{S_2\dots S_r}$ is a tilting ideal of finite codimension.
Now
\[
I_{S_2\dots S_r}/I_{S_1 S_2\dots S_r} \cong (A/I_{S_1}) \otimes_A I_{S_2\dots S_r}
\]
which is finite dimensional since $A/I_{S_1}$ is finite dimensional and $I_{S_2\dots S_r}$ is a finitely generated left $A$-module. Also $I_{S_1\dots S_r} = I_{S_1} I_{S_2\dots S_r}$ is a tilting module by Proposition~\ref{pro-IT-tilting}. Similarly, $I_{S_1\dots S_r} = I_{S_1\dots S_{r-1}} I_{S_r}$ is tilting as a right $A$-module.

Now let $I$ be a partial tilting left ideal in $A$ with $A/I\in\mcE(\mcS)$. We show by induction on $\dim_K (A/I)$ that $I\in \mcI(\mcS)$. If $I\neq A$, choose a simple submodule $S$ of $A/I$.

We know $\kD \Hom_A(S,A) \cong \Ext^2_A(A,S) = 0$, so $\Hom_A(S,A)=0$.
It follows that $\Ext^1(S,I)\neq 0$. Thus $\Tor_1^A(\kD(S),I)\neq 0$.
Thus by Lemma~\ref{lem-SY-Tor}, $\kD(S)\otimes_A I=0$.
Thus $(A/I_S)\otimes_A I = 0$, so $I_S I = I$.

Let $U = \{ a\in A \mid I_S a \subseteq I\}$. It is a left ideal with $I_S U \subseteq I \subseteq U$. Since $I_S I = I$ it follows that $I_S U=I$.

Since the natural map $A\to \Hom_A(I_S,A)$ is an isomorphism, we have $U\cong \Hom_A(I_S,I)$.
Since $\Ext^1_A(I_S, I) \cong \Ext^2_A(A/I_S, I) \cong \kD\Hom_A(I_SI, A/I_S)=0$, we have $U \cong \Hom_A(I_S, I) \cong \RHom_A(I_S, I)$.
This implies that $U$ is a partial tilting module.
Since there is a surjection from $A/I_S U$ to $A/U$, $A/U$ belongs to $\mcE(\mcS)$.
By induction, $U$ is in $\mcI(\mcS)$ and so is $I=I_SU$.

Finally, if $I,I'\in\mcI(\mcS)$ are isomorphic as $A$-modules, then $I=I'$ by the argument of \cite[Theorem III.1.6(d)]{BIRS}.
Namely, by the 2-Calabi-Yau property $\Ext^1_A(A/I,A)=0$, so an isomorphism $f:I\to I'$ lifts to a map $f':A\to A$. Thus there is an element $a\in A$ such that $f'$ is right multiplication by $a$. Then $I'=\Im f = Ia \subseteq I$,
and by symmetry $I\subseteq I'$.
\end{proof}

\subsection{Relations}
\label{sec-relns}
\begin{lem}\label{lem-two-simple-serre}
Let $A$ be 2-Calabi-Yau and let $S,T$ be finite-dimensional rigid simple $A$-modules such that 
$\Ext^1_A(S,T)$ is 1-dimensional as a right $\End_A(S)$-module and as a left $\End_A(T)$-module.
Let $E_{S}^{T}$ (resp. $E^{S}_{T}$) be the unique $A$-module of length two such that the top is $T$ (resp. $S$) and the socle is $S$ (resp. $T$).
We denote by $\mcE(\{S,T\})$ the Serre subcategory of the category of finite-dimensional $A$-modules generated by $S$ and $T$.
Then the following statements hold.
    \begin{enumerate}[{\rm (a)}]
    \item $\mcE(\{S,T\})$ is a uniserial category, that is, any indecomposable module in $\mcE$ has a unique composition series.
    \item If $L\in \mcE(\{S,T\})$ is indecomposable, then $L$ is isomorphic to one of $S$, $T$, $E_{S}^{T}$ and $E^{S}_{T}$.
\end{enumerate}
\end{lem}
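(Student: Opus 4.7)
Claim (a) is immediate from (b): each of $S$, $T$, $E_S^T$, $E^S_T$ has a unique composition series---the simples trivially, and the length-two modules because each has a unique proper nonzero submodule, namely its socle. So the substance is claim (b), that every indecomposable $L\in\mcE(\{S,T\})$ has length at most $2$.

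I would prove (b) by induction on the length $n=\ell(L)$. The base case $n\leq 2$ is a direct enumeration of extensions: rigidity gives $\Ext^1_A(S,S)=\Ext^1_A(T,T)=0$, while the hypothesis together with 2-CY duality $\Ext^1_A(T,S)\cong\kD\Ext^1_A(S,T)$ (Proposition~\ref{pro-2-CY-preproj}(b)) makes $\Ext^1_A(T,S)$ also 1-dimensional in the corresponding bimodule sense, so the only length-two indecomposables are $E_S^T$ and $E^S_T$. The key technical input for the inductive step is the vanishing
\[
\Ext^1_A(E_S^T,X)=\Ext^1_A(E^S_T,X)=0\quad\text{for }X\in\{S,T\},
\]
which I would derive by applying $\RHom_A(-,X)$ to $0\to S\to E_S^T\to T\to 0$ and $0\to T\to E^S_T\to S\to 0$. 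By Proposition~\ref{pro-2-CY-preproj} one has $\Ext^2_A(S,T)=\Ext^2_A(T,S)=0$ and $\Ext^2_A(S,S)\cong\kD\End_A(S)$ is 1-dimensional over $\End_A(S)$; the relevant connecting map in the long exact sequence is Yoneda product with the nonzero extension class, and non-degeneracy of the 2-CY pairing $\Ext^1_A(S,T)\otimes\Ext^1_A(T,S)\to\Ext^2_A(S,S)$ combined with 1-dimensionality forces this connecting map to be an isomorphism, killing the relevant $\Ext^1$.

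For the inductive step, let $L$ be indecomposable of length $n\geq 3$ and pick a simple submodule $L'\subseteq\soc L$; by the symmetry $S\leftrightarrow T$ of the hypotheses, assume $L'\cong S$. By induction $L/L'$ decomposes as $\bigoplus_i M_i$ with every $M_i\in\{S,T,E_S^T,E^S_T\}$, and the extension class of $0\to L'\to L\to L/L'\to 0$ has components $\xi_i\in\Ext^1_A(M_i,S)$. If any $\xi_i=0$ then the corresponding $M_i$ lifts to a direct summand of $L$, contradicting indecomposability since $\ell(M_i)\leq 2<n$. The key vanishing then forces every $M_i=T$, so $L/L'\cong T^k$ with $k=n-1\geq 2$, and the extension class lives in $\Ext^1_A(T^k,S)\cong\Ext^1_A(T,S)^k$, which is a rank-$k$ free module over the division ring $\End_A(T)$ on the appropriate side. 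The group $\Aut_A(T^k)=\GL_k(\End_A(T))$ acts transitively on nonzero elements, so a nonzero $\xi$ is equivalent to one supported in a single component, yielding $L\cong E_S^T\oplus T^{k-1}$---a decomposition, contradicting indecomposability.

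The main obstacle will be the key $\Ext^1$ vanishing. One must identify the connecting homomorphism as Yoneda multiplication by the extension class and invoke non-degeneracy of the 2-CY pairing in its full bimodule form (not merely $K$-linearly), together with the 1-dimensionality hypothesis in the appropriate sided sense. The rest is routine bookkeeping for extensions plus the standard transitivity of $\GL_k(D)$ on nonzero vectors in $D^k$ for the division ring $D=\End_A(T)$.
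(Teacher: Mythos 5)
Your proof is correct, but it follows a genuinely different route from the paper's. The paper proves (a) first, by quoting Gabriel's criterion for uniserial categories (using that both $\Ext^1_A(S,T)$ and, by 2-Calabi-Yau duality, $\Ext^1_A(T,S)$ are 1-dimensional on either side), and then deduces (b) from uniseriality: a length-$3$ indecomposable would have a uniserial length-$3$ quotient $N$ sitting in a non-split sequence $0\to S\to N\to E^S_T\to 0$, while a short computation with $\Hom_A(S,-)$ shows $\Ext^1_A(S,E^S_T)=0$, hence $\Ext^1_A(E^S_T,S)=0$ by duality, a contradiction. You instead prove (b) directly by induction on length and obtain (a) as a corollary, which is self-contained and avoids the citation to Gabriel, at the cost of a longer argument (the splitting-off of components with vanishing extension class, and transitivity of $\GL_k(\End_A(T))$ on nonzero vectors). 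Your identified ``main obstacle''---that the connecting maps are Yoneda products with the extension class and that the 2-CY pairing $\Ext^1_A(S,T)\otimes\Ext^1_A(T,S)\to\Ext^2_A(S,S)$ is non-degenerate and compatible with composition---is a standard consequence of the functoriality in Proposition~\ref{pro-2-CY-preproj}(a), so it is not a gap; but note you can sidestep it entirely, as the paper does, by computing each needed $\Ext^1$ on the Hom side (e.g.\ $\Ext^1_A(T,E_S^T)=0$ from $\Hom_A(T,T)\to\Ext^1_A(T,S)$ being onto, using 1-dimensionality over $\End_A(T)$) and then invoking only the plain duality $\Ext^1_A(X,Y)\cong\kD\Ext^1_A(Y,X)$, which requires no statement about the pairing's compatibility with Yoneda products.
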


\begin{proof}
(a) By assumption $\Ext^1_A(S,T)$ is 1-dimensional as a right $\End_A(S)$-module and as a left $\End_A(T)$-module. By the 2-Calabi-Yau property, also $\Ext^1_A(T,S)$ is 1-dimensional as a right $\End_A(T)$-module and as a left $\End_A(S)$-module. The claim then follows from \cite[subsection 8.3]{Gabriel73}.

(b) Let $M$ be an indecomposable module in $\mcE$ of length $n$. Without loss of generality its top is $S$. If $n\geq 3$, since $\mcE(\{S,T\})$ is uniserial, there is an indecomposable factor module $N$ of $M$ of length three. Then there is a non-split short exact sequence $0 \to S \to N \to E_{T}^S \to 0$. But applying $\Hom_A(S,-)$ to this sequence and using that $\Hom_A(S,E_T^S)=0$, $\Ext^1_A(S,S)=0$ and that $\Ext^1_A(S,T)$ is 1-dimensional as a right $\End_A(S)$-module, we see that $\Ext^1_A(S,E^S_T)=0$. Thus by the 2-Calabi-Yau property $\Ext^1_A(E^S_T,S)=0$, a contradiction. Thus any indecomposable module in $\mcE$ has length at most two, and the assertion holds.
\end{proof}

For a subcategories $\mcB$ and $\mcC$ of an abelian category $\mcA$, we denote by $\mcB\ast \mcC$ the subcategory of $\mcA$ consisting objects $A\in\mcA$ admitting a short exact sequence $0\to B \to A \to C\to 0$ in $\mcA$ with $B\in \mcB$ and $C\in\mcC$.

\begin{pro}\label{prop-addS-relation}
Let $A$ be 2-Calabi-Yau. For finite-dimensional rigid simple $A$-modules $S,T$, the following hold.
\begin{enumerate}[{\rm (a)}]
    \item $\add(S)\ast\add(S)=\add(S)$,
    \item $\add(S)\ast\add(T) = \add(T)\ast\add(S)$ if $\Ext^1_A(S,T)=0$,
    \item $\add(S)\ast\add(T)\ast\add(S) = \add(T)\ast\add(S)\ast\add(T)$ if $\Ext^1_A(S,T)$ is 1-dimensional as a right $\End_A(S)$-module and as a left $\End_A(T)$-module.
\end{enumerate}
\end{pro}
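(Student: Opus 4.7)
My plan is to treat the three parts in increasing order of subtlety, using the rigidity of $S$, the 2-Calabi-Yau duality, and the structure of $\mcE(\{S,T\})$ from Lemma~\ref{lem-two-simple-serre}.

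For part (a), since $\Ext^1_A(S,S)=0$, any short exact sequence $0\to S^n\to M\to S^m\to 0$ splits, so $M\cong S^{n+m}\in\add(S)$. The reverse containment is immediate by taking $N_1=M$, so $\add(S)\ast\add(S)=\add(S)$.

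For part (b), the key observation is that by the 2-Calabi-Yau property (Proposition~\ref{pro-2-CY-preproj}(b)) we have $\Ext^1_A(T,S)\cong \kD\Ext^1_A(S,T)=0$. Therefore any short exact sequence $0\to S^n\to M\to T^m\to 0$ or $0\to T^m\to M\to S^n\to 0$ splits. Both $\add(S)\ast\add(T)$ and $\add(T)\ast\add(S)$ thus coincide with $\add(S\oplus T)$, giving the equality.

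For part (c), which is the main case, my strategy is to show that both $\add(S)\ast\add(T)\ast\add(S)$ and $\add(T)\ast\add(S)\ast\add(T)$ coincide with the whole Serre subcategory $\mcE:=\mcE(\{S,T\})$. Any module admitting a 3-step filtration with factors in $\mcE$ certainly lies in $\mcE$; conversely, Lemma~\ref{lem-two-simple-serre}(b) tells us that every module in $\mcE$ is a finite direct sum of copies of $S$, $T$, $E_S^T$, and $E_T^S$. Since the operation $\ast$ is closed under direct sums of filtrations, it suffices to exhibit, for each of these four indecomposables, a filtration of each of the two prescribed shapes. For example, $E_S^T$ is of shape $\add(S)\ast\add(T)\ast\add(S)$ via $0\subset S\subset E_S^T\subset E_S^T$ (quotients $S,T,0$) and of shape $\add(T)\ast\add(S)\ast\add(T)$ via $0\subset 0\subset S\subset E_S^T$ (quotients $0,S,T$); the three other cases ($S$, $T$, and $E_T^S$) are analogous and even more elementary, each being obtained by padding a trivial filtration with zero quotients. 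This establishes both inclusions and hence the equality.

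The main obstacle is conceptual rather than technical: it is the passage, via Lemma~\ref{lem-two-simple-serre}, from the assumption that $\Ext^1_A(S,T)$ is one-dimensional on each side to the complete description of $\mcE$ as a uniserial subcategory whose only indecomposables have length at most two. Once this structural input is in hand, verifying the existence of filtrations is a routine book-keeping exercise.
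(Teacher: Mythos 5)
Your proposal is correct and follows essentially the same route as the paper: (a) and (b) from rigidity together with the $2$-Calabi--Yau symmetry $\Ext^1_A(T,S)\cong \kD\Ext^1_A(S,T)$, and (c) deduced from Lemma~\ref{lem-two-simple-serre} by observing that both triple products coincide with $\mcE(\{S,T\})$. Your write-up merely makes explicit (via Krull--Schmidt and the four indecomposables $S$, $T$, $E_S^T$, $E_T^S$) the deduction that the paper leaves to the reader.
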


\begin{proof}
Part (a) holds since $\Ext_A^1(S,S)=0$, and (b) since $\Ext_A^1(S,T)=\Ext^1_A(T,S)=0$. Part (c) follows from Lemma \ref{lem-two-simple-serre}.
\end{proof}

\begin{proof}[Proof of Proposition~\ref{pro-ideal-relation}]
For a finite-dimensional $A$-module $M$ and a sequence $S_1, S_2,\dots, S_r$ of rigid finite dimensional simple modules, we have that $I_{S_1\dots S_r}M=0$ if and only if $M\in\add S_1 \ast \dots \ast\add S_r$. Now parts (a)-(c) follow from the corresponding parts of Proposition~\ref{prop-addS-relation}. For example in case (c), the module $M = A/I_{STS}$ is finite-dimensional by Theorem~\ref{thm-set-i-equiv}, and annihilated by $I_{STS}$, so is in $\add(S)\ast\add(T)\ast\add(S)$. Thus $M$ is in $\add(T)\ast\add(S)\ast\add(T)$ by Proposition~\ref{prop-addS-relation}(c), so $I_{TST} M =0$, and hence
$I_{TST}\subseteq I_{STS}$. Similarly $I_{STS}\subseteq I_{TST}$, giving equality.
\end{proof}

Now let $\mcS$ be a set of pairwise non-isomorphic finite-dimensional rigid simple $A$-modules. For simplicity we assume that $\mcS$ is split, meaning that $\End_A(S)=K$ for all $S\in\mcS$. We denote by $\mcE(\mcS)$ the Serre subcategory of the category of finite-dimensional $A$-modules generated by $\mcS$. Moreover we denote by $\sD_{\mcE(\mcS)}(\Mod A)$ the triangulated subcategory of $\sD(\Mod A)$ consisting of the objects $X$ such that the total cohomology of $X$ belongs to $\mcE(\mcS)$, that is, $\bigoplus_{n\in\mbZ}H^n(X)\in\mcE(\mcS)$.

Let $\mcZ=\bigoplus_{S\in\mcS}\mbZ e_S$ be the free $\mbZ$-module with basis elements $e_S$, where $S$ runs through the elements of $\mcS$, up to isomorphism, 
and let $(-,-)$ be the bilinear form on $\mcZ$ given by
\[
(e_S,e_T) = \sum_{i\ge 0} (-1)^i \dim_K \Ext^i_A(S,T).
\]
It is defined and symmetric for $A$ 2-Calabi-Yau.
For $M\in \mcE(\mcS)$, let
\[
[M]:=\sum_{S\in\mcS}(M : S)  e_S \in \mcZ,
\]
where $(M : S)$ is the multiplicity of $S$ as a composition factor of $M$.
For $X$ in $\sD_{\mcE(\mcS)}(\Mod A)$, we define
\[
F(X) = \sum_{n\in\mbZ}(-1)^n[H^nX] \in \mcZ.
\]
Let $W(\mcS)$ be the Coxeter group, as in the introduction.
We define an action of $W(\mcS)$ on $\mcZ$ by $\si_S(x):=x-(e_S,x) e_S$.

\begin{pro}\label{pro-I-S-tensor}
Let $S, T\in\mcS$ and let $m=\dim_K \Ext^1_A(S,T)$.
\begin{enumerate}[{\rm (a)}]
\item $\Tor_1^A(I_S, S)\cong S$ as left $A$-modules.
\item If $S\not\cong T$, then the $A$-module $I_S\otimes_A T$ has exactly two composition factors $S$ and $T$, with multiplicities $m$ and $1$, respectively.
\item 
We have
\[
  I_S\otimes_A^{\mbL}T\cong\begin{cases}
    I_S\otimes_A T & (S \not\cong T), \\
    \Tor_1^A(I_S, T)[1] & (S\cong T).
  \end{cases}
\]
\item The functor $I_S\otimes_A^{\mbL} -$ sends $\sD_{\mcE(\mcS)}(\Mod A)$ to itself, and $F(I_S\otimes_A^{\mbL} X) = \si_S (F(X))$ for $X$ in $\sD_{\mcE(\mcS)}(\Mod A)$.
\end{enumerate}
\end{pro}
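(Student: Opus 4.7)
My plan is to run all four parts through the short exact sequence $0 \to I_S \to A \to A/I_S \to 0$ of $A$-bimodules. The key inputs are: (i) since $\mcS$ is split, $A/I_S \cong \End_K(S) \cong M_n(K)$ with $n = \dim_K S$, so as a left $A$-module it is $S^n$ and as a right $A$-module $\kD(S)^n$; (ii) Calabi-Yau duality from Proposition \ref{pro-2-CY-preproj}; and (iii) the standard isomorphism $\Tor_i^A(\kD M, N) \cong \kD \Ext_A^i(N, M)$ for finite-dimensional $M$. The crucial device is that each $\Tor_i^A(A/I_S, T)$, being annihilated by $I_S$ from the left, is a module over $A/I_S \cong M_n(K)$, hence isomorphic to a power $S^k$; the multiplicity $k$ is forced by the $K$-dimension, computed via (iii) and (ii).

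For part (a), the Tor long exact sequence of the bimodule triangle gives $\Tor_1^A(I_S, S) \cong \Tor_2^A(A/I_S, S)$ as left $A$-modules; by (iii) and the 2-Calabi-Yau property $\kD\Ext^2_A(S,S) \cong \Hom_A(S,S) = K$, the $K$-dimension is $n$ and the module is $S$. For (b), $S \not\cong T$ forces $I_S \not\subseteq I_T$ (otherwise the simple ring $A/I_S$ surjects onto $A/I_T\neq 0$, yielding $S \cong T$), hence $(A/I_S)\otimes_A T = T/I_ST = 0$, and the sequence collapses to $0 \to \Tor_1^A(A/I_S, T) \to I_S \otimes_A T \to T \to 0$; the left term has $K$-dimension $n\dim_K \Ext^1_A(T,S) = nm$ (using Calabi-Yau to swap $S$ and $T$), hence equals $S^m$. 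For (c) when $S \not\cong T$, the triangle reduces the vanishing $\Tor_{\geq 1}^A(I_S, T) = 0$ to $\Tor_2^A(A/I_S, T) \cong \kD\Hom_A(T,S)^n = 0$, while higher $\Tor$ vanishes since $\gldim A = 2$. When $S = T$, rigidity gives $\Tor_1^A(A/I_S, S) \cong \kD\Ext^1_A(S,S)^n = 0$, so the sequence becomes $0 \to I_S\otimes_A S \to S \to S \to 0$ with the right map the canonical isomorphism $(A/I_S)\otimes_A S \cong S$; thus $I_S \otimes_A S = 0$ and $I_S \otimes_A^{\mbL} S \cong \Tor_1^A(I_S, S)[1]$.

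For part (d), both $I_S \otimes_A^{\mbL} -$ and $\si_S \circ F$ respect distinguished triangles and shifts; and any $X \in \sD_{\mcE(\mcS)}(\Mod A)$ has only finitely many non-zero cohomology groups, each of finite length with composition factors in $\mcS$, so lies in the triangulated hull of $\{T[n] : T \in \mcS,\ n \in \mbZ\}$. The claim therefore reduces to $X = T \in \mcS$: for $T = S$, parts (a), (c) give $I_S \otimes_A^{\mbL} S \cong S[1]$, so $F = -e_S = \si_S(e_S)$, matching $(e_S, e_S) = 1 - 0 + 1 = 2$; for $T \not\cong S$, part (b) gives $F(I_S \otimes_A T) = m e_S + e_T = e_T - (e_S, e_T) e_S$ since $(e_S, e_T) = 0 - m + 0 = -m$.

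The main obstacle is conceptual rather than computational: Calabi-Yau identities furnish the Tor groups only as $K$-vector spaces, so to recognise them as specific left $A$-modules one must exploit that the left $A$-action factors through $A/I_S \cong M_n(K)$, forcing the result to be a power of $S$. Once this device is in place, every part reduces to dimension arithmetic fed by Calabi-Yau duality and the rigidity hypothesis.
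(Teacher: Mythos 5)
Your proof is correct and takes essentially the same route as the paper: tensor $0\to I_S\to A\to A/I_S\to 0$ with $T$, use the split hypothesis to identify $A/I_S$ with $\End_K(S)$ and Calabi--Yau duality to compute the $\Tor$ terms (the paper packages your matrix-ring/dimension-count device as the bimodule isomorphism $A/I_S\cong S\otimes_K\kD(S)$, giving $\Tor_i^A(A/I_S,T)\cong S\otimes_K\kD\Ext^i_A(T,S)$ directly), then d\'evissage for (d). The only blemish is the identification $\Tor_2^A(A/I_S,T)\cong\kD\Hom_A(T,S)^n$ in your (c), which should be $\Hom_A(S,T)^n$ (up to duality); since both vanish when $S\not\cong T$, nothing is affected.
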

%%%%%
\begin{proof}
By assumption $\End_A(S)=K$, so by the Jacobson Density Theorem, $A/I_S\cong \End_K(S) \cong S\otimes_K \kD(S)$ as $A$-bimodules. Thus also $\kD(A/I_S)\cong S\otimes_K \kD(S)$. For $i\ge 0$, we have 
\[
\Tor_i^A(A/I_S, T) \cong \kD\Ext^i_A(T,\kD(A/I_S))
\cong \kD\Ext^i_A(T,S\otimes_K \kD(S))
\]
\[
\cong \kD\left(\Ext^i_A(T,S)\otimes_K \kD(S)\right)
\cong S \otimes_K \kD\Ext^i_A(T,S)
\]
as left $A$-modules. Thus, tensoring the exact sequence~(\ref{ses-I-P-P/I}) with $T$, gives an exact sequence
\begin{align}\label{seq-T-IS-S-SSS}
0 \to 
S \otimes_K \kD\Ext^1_A(T,S) \to I_S \otimes_A T \to T \to S \otimes_K \kD\Hom_A(T,S)
\to 0,
\end{align}
as well as $\Tor_1^A(I_S, T) \cong \Tor_2^A(A/I_S, T) \cong S \otimes_K \kD\Ext^2_A(T,S)$. 

(a) Taking $S=T$, by the $2$-Calabi-Yau property, $\Ext_A^2(S,S)$ is one dimensional over $K$, so $\Tor_1^A(I_S, S)$ is isomorphic to $S$.

(b) Follows from the exact sequence (\ref{seq-T-IS-S-SSS}).

(c) Follows from (a), (b) and Lemma \ref{lem-SY-Tor}.

(d) Follows from (c).
\end{proof}

The following proposition is the $\Hom$ version of Proposition \ref{pro-I-S-tensor}.
We omit the proof since it is essentially the same as the tensor version.

\begin{pro}\label{pro-I-S-hom}
Let $S,T\in\mcS$ and let $m=\dim_K \Ext^1_A(S,T)$.
\begin{enumerate}[{\rm (a)}]
\item $\Ext_A^1(I_S, S) \cong S$ as left $A$-modules.
\item If $S\not\cong T$, then the $A$-module $\Hom_A(I_S, T)$ has exactly two composition factors $S$ and $T$, with multiplicities $m$ and $1$, respectively.
\item We have
\[
  \RHom_A(I_S, T)\cong\begin{cases}
    \Hom_A (I_S, T) & (S\not\cong T), \\
    \Ext^1_A(I_S, T)[1] & (S\cong T).
  \end{cases}
\]
\item The functor $\RHom_A(I_S, -)$ sends $\sD_{\mcE(\mcS)}(\Mod A)$ to itself, and $F(\RHom_A(I_S, X)) = \si_S (F(X))$ for $X$ in $\sD_{\mcE(\mcS)}(\Mod A)$.
\end{enumerate}
\end{pro}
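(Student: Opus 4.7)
The plan is to mirror the proof of Proposition~\ref{pro-I-S-tensor} by applying $\RHom_A(-, T)$, rather than $(-) \otimes_A^{\mbL} T$, to the short exact sequence~(\ref{ses-I-P-P/I}) $0 \to I_S \to A \to A/I_S \to 0$. Since $\Ext^i_A(A, T) = 0$ for $i > 0$, this collapses to the four-term sequence
\[
0 \to \Hom_A(A/I_S, T) \to T \to \Hom_A(I_S, T) \to \Ext^1_A(A/I_S, T) \to 0
\]
together with $\Ext^i_A(I_S, T) \cong \Ext^{i+1}_A(A/I_S, T)$ for $i \geq 1$. All higher Ext groups of $I_S$ vanish, since by Proposition~\ref{pro-I-nCY} the left module $I_S$ is tilting, hence has projective dimension at most one.

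The key input is the identification of $\Ext^i_A(A/I_S, T)$ as a left $A$-module. The $A$-bimodule isomorphism $A/I_S \cong S \otimes_K \kD(S)$ (valid because $\End_A(S) = K$) together with the adjunction $\Hom_A(S \otimes_K V, -) \cong \Hom_K(V, \Hom_A(S, -))$ for a $K$-vector space $V$ yield
\[
\Ext^i_A(A/I_S, T) \cong \Ext^i_A(S, T) \otimes_K S,
\]
where the left $A$-action on the right-hand side is produced by the right $A$-action on $\kD(S)$ via the natural identification $\Hom_K(\kD(S), K) \cong S$, which transports correctly to the usual left action on $S$.

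Plugging in $i = 0, 1, 2$ and combining with the $2$-Calabi-Yau duality $\Ext^{2-i}_A(T, S) \cong \kD\Ext^i_A(S, T)$ gives (a)--(c). For (a), $\Ext^1_A(I_S, S) \cong \Ext^2_A(S, S) \otimes_K S \cong S$, since $\Ext^2_A(S, S) \cong \kD\End_A(S) \cong K$. For (b), when $S \not\cong T$ both $\Hom_A(S, T) = 0$ and $\Hom_A(T, S) = 0$, so the four-term sequence becomes $0 \to T \to \Hom_A(I_S, T) \to \Ext^1_A(S, T) \otimes_K S \to 0$, yielding the stated multiplicities. For (c), when $S \not\cong T$ the same vanishings force $\Ext^1_A(I_S, T) \cong \Ext^2_A(S, T) \otimes_K S = 0$; when $S \cong T$ the natural map $\Hom_A(A/I_S, S) \to S$ is an isomorphism (both sides identify with $S$), forcing $\Hom_A(I_S, S) = 0$ and leaving $\Ext^1_A(I_S, S) \cong S$ as the only non-vanishing cohomology of $\RHom_A(I_S, S)$.

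For (d), devissage along composition series reduces, via additivity of $F$ on exact triangles, to the case of a single simple $T \in \mcS$, where (c) exhibits $\RHom_A(I_S, T)$ already as an object of $\sD_{\mcE(\mcS)}(\Mod A)$. The equality $F(\RHom_A(I_S, T)) = \si_S(e_T) = e_T - (e_S, e_T)e_S$ is then a direct computation using the Calabi-Yau form: for $S \not\cong T$ one reads off $F(\Hom_A(I_S, T)) = e_T + m e_S$ from (b), matched by $(e_S, e_T) = 0 - m + 0 = -m$; for $S \cong T$, $F(\RHom_A(I_S, S)) = -e_S$ is matched by $(e_S, e_S) = 1 - 0 + 1 = 2$. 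The main obstacle I expect is the bimodule bookkeeping for the key Ext identification and keeping the degree-shift conventions for $\RHom$ consistent with those used for tensor products in Proposition~\ref{pro-I-S-tensor}(c).
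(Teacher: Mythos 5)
Your argument is correct and is exactly the Hom-side analogue of the paper's proof of Proposition~\ref{pro-I-S-tensor}, i.e.\ precisely the proof the paper omits as ``essentially the same as the tensor version'': apply $\RHom_A(-,T)$ to the sequence~(\ref{ses-I-P-P/I}) and identify $\Ext^i_A(A/I_S,T)\cong \Ext^i_A(S,T)\otimes_K S$ via $A/I_S\cong S\otimes_K\kD(S)$. The only point worth making explicit is in the $S\cong T$ case of (c): the natural map $\Hom_A(A/I_S,S)\to\Hom_A(A,S)$ is injective because $A\to A/I_S$ is surjective, so the equality of (finite) dimensions does yield the isomorphism you assert, and hence $\Hom_A(I_S,S)=0$.
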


\begin{proof}[Proof of Theorem~\ref{thm-cox-bij}]
We adapt the argument of \cite[Theorem III.1.9]{BIRS}. By \cite[Theorem 3.3.1(ii)]{BB05} one can pass between any two reduced expressions for $w$ by the operations of (i) replacing $\sigma_S \sigma_T$ with $\sigma_T \sigma_T$, if there is no arrow in $Q(\mcS)$ from $S$ to $T$, and (ii) replacing $\sigma_S \sigma_T \sigma_S$ with $\sigma_T \sigma_S \sigma_T$ if there is a unique arrow in $Q(\mcS)$ from $S$ to $T$. Thus by Proposition~\ref{pro-ideal-relation} the mapping is well-defined. 

To show the mapping is surjective, let $I \in \mcI(\mcS)$, take an expression $I = I_{S_1}I_{S_2}\dots I_{S_k}$ with $k$ minimal and let $w = \sigma_{S_1}\dots \sigma_{S_k}$. By \cite[Theorem 3.3.1(i)]{BB05}, one can pass to a reduced expression for $w$ by the operations (i) and (ii), as above, and (iii) remove $\sigma_S \sigma_S$. By Proposition~\ref{pro-ideal-relation}(i) and the minimality of $k$, operation (iii) doesn't occur, so the expression $w = \sigma_{S_1}\dots \sigma_{S_k}$ is reduced.

To prove that the mapping is injective, we may assume that $\mcS$ contains only finitely many isomorphism classes of rigid simple modules, for if two Coxeter group elements $w,w'$ with reduced expressions involving the generators $\si_{S_1},\dots,\si_{S_\ell}$ are sent to the same ideal, then they are equal in $W(\{S_1,\dots,S_\ell\})$, so also equal in $W(\mcS)$. Now the action of $W(\mcS)$ on $\mcZ$ is the `geometric representation' considered in \cite[section 4.2]{BB05}. 

We claim that if $w = \sigma_{S_1}\dots \sigma_{S_k}$ is a reduced expression, then $I = I_{S_1}\dots I_{S_k} \cong I_{S_1}\otimes_\Lambda^{\mbL}\dots \otimes_\Lambda^{\mbL} I_{S_k}$. Then $F(I\otimes_A^\mbL X) = w(F(X))$ for $X \in \sD_{\mcE(\mcS)}(\Mod A)$ by Proposition~\ref{pro-I-S-tensor}(d), and since the geometric representation is faithful by \cite[Theorem 4.2.7]{BB05}, it follows that $I$ determines~$w$.

We prove the claim by induction on $k$, so let $w' = \sigma_{S_1}\dots \sigma_{S_{k-1}}$ and $I' = I_{S_1}\dots I_{S_{k-1}}$. By Proposition~\ref{pro-IT-tilting}, for the opposite algebra, and with $S = D(S_k)$, it suffices to show that $\Tor_1^A(I',S_k)=0$. By Lemma~\ref{lem-SY-Tor}, for the opposite algebra, if this fails, then $I'\otimes_A S_k = 0$. Thus it suffices to show that $F(I'\otimes_A^\mbL S_k)$ is positive, but by the induction this is $w'(F(S_k))$, and this is positive by \cite[Proposition 4.2.5(i)]{BB05}.
\end{proof}

\subsection{Extended Dynkin quivers}
\label{sec-ext-dynkin}
In this subsection $Q$ is an extended Dynkin quiver and $\delta$ is the minimal positive imaginary root.

\begin{proof}[Proof of Theorem~\ref{thm-all-cof}]
Let $X$ be the set of roots for $Q$ together with zero. If $\alpha\in X$ then the orbit $\{ \alpha+n\delta \mid n\in\mbZ \}$ under addition of $\delta$ is a subset of $X$, and since any orbit contains a root for the Dynkin quiver, the set of orbits is finite. But each orbit can contain at most one element of $\Sigma_\la^{\rm re}$, for if $\alpha$ and $\beta = \alpha+m\delta$ belong to $\Sigma_\la^{\rm re}$, with $m>0$, then the decomposition $\beta=\alpha+\gamma$ with $\gamma=m\delta$ contradicts that $\beta\in \Sigma_\la^{\rm re}$.

Suppose $I$ is a tilting ideal in $A= \Pi^\la(Q)$ with $A/I$ finite dimensional. If $S$ is a finite-dimensional rigid simple module for $A= \Pi^\la(Q)$, and $T$ is a finite-dimensional simple module which is not rigid, then $\Dim T$ is an imaginary root, so a multiple of $\delta$, so $(\Dim S,\Dim T)=0$. Thus $\Ext^1_A(S,T)=\Ext^1_A(T,S)=0$ by Proposition~\ref{pro-dim-Ext}. It follows that $A/I$ decomposes as a direct sum $Y\oplus Z$ where $\Dim Y$ is a multiple of $\delta$ and $Z$ has composition factors in $\mcR$. Now if $Y\neq 0$, then $\End_A(Y)\neq 0$, so $\Ext^1_A(Y,Y)\neq 0$ by Proposition~\ref{pro-dim-Ext}. Thus $\Ext^1_A(A/I,A/I)\neq 0$. On the other hand, since $\Ext^1_A(I,I)=0$ we have a commutative diagram with exact rows
\[
\begin{CD}
0 @>>> \Hom_A(A,I) @>>> \Hom_A(A,A) @>>> \Hom_A(A,A/I) @>>> 0 \\
& & @VVV @VfVV @VgVV \\
0 @>>> \Hom_A(I,I) @>>> \Hom_A(I,A) @>>> \Hom_A(I,A/I) @>>> 0.
\end{CD}
\]
Thus there is an induced surjection from $\Coker f$ to $\Coker g$. Now we have $\kD \Coker f\cong \kD \Ext^1_A(A/I,A) \cong \Ext^1_A(A,A/I) = 0$ by the 2-Calabi-Yau property, and $\Coker g\cong \Ext^1_A(A/I,A/I)$, so this vanishes, a contradiction. Thus $Y=0$, so $A/I$ has composition factors in $\mcR$.
\end{proof}

Now suppose that $\la\cdot\delta=0$, let $\Sigma_\la^{\rm re} = \{ \al_1,\dots,\al_k \}$ and let $S_{\al_i}$ be the rigid simple module of dimension vector~$\al_i$. We have $\al_i<\delta$, for $\al_i-\delta$ is a real root with $\la\cdot(\al_i-\delta)=0$, and it can't be positive, since that would contradict that $\al_i\in\Sigma_\la^{\rm re}$.

Let $Q(\mcR)$ be the Ext-quiver of $\mcR$, so with vertex set $\{1,\dots,k\}$ and the number of arrows from $i$ to $j$ is zero if $i=j$ and is $-(\al_i,\al_j)_Q$ if $i\neq j$. Here $(-,-)_Q$ is the symmetric bilinear form for $Q$, and we write $q_Q$ for the corresponding quadratic form. Now $Q(\mcR)$ is the double of some quiver $\Ga$, that is, $\bar{\Ga}=Q(\mcR)$. We denote by $q_{\Ga}$ the quadratic form for $\Ga$.

\begin{pro}\label{pro-comp-ext}
The following statements hold.
\begin{enumerate}[{\rm (a)}]
    \item We have $q_{\Ga}(d)=q_Q(\Sigma_{i=1}^k d_i\al_i)$ for $d=(d_i)\in\mbZ^k$.
    \item $\Ga$ is a disjoint union of extended Dynkin quivers.
\end{enumerate}
\end{pro}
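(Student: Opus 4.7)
The plan is to prove (a) by direct coefficient comparison and then deduce (b) from (a) together with the classical classification of connected positive semidefinite quivers and a decomposition lemma for $\delta$ in terms of $\Sigma^{\rm re}_\la$. Part (a) is routine: both sides are quadratic forms in $d$, so I would match the coefficients of $d_i^2$ and $d_id_j$ for $i\neq j$. On the $\Gamma$-side, $\Gamma$ has no loops because each $S_{\al_i}$ is rigid; the coefficient of $d_i^2$ is $1$, and for $i\neq j$ it is minus the number of arrows between $i$ and $j$ in $\bar\Gamma = Q(\mcR)$. By the definition of $Q(\mcR)$ and Proposition~\ref{pro-dim-Ext} applied to the non-isomorphic simples $S_{\al_i},S_{\al_j}$ (so the $\Hom$ terms vanish by Schur), this count equals $-(\al_i,\al_j)_Q$. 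On the $Q$-side the diagonal coefficient is $q_Q(\al_i)=1$ (real root) and the off-diagonal is $(\al_i,\al_j)_Q$ from expanding $q_Q(\sum d_i\al_i)$; these match.

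For (b), note first that $\Gamma$ is loop-free and acyclic (the former from rigidity, the latter because $\bar\Gamma$ is the double of an acyclic quiver, as remarked before Theorem~\ref{thm-cox-bij}). By (a) and the positive semidefiniteness of $q_Q$ (valid for extended Dynkin $Q$), the form $q_\Gamma$ is itself positive semidefinite, so by the classical classification of connected loop-free quivers with positive semidefinite quadratic form each connected component of $\Gamma$ is either Dynkin or extended Dynkin. The remaining task, which I expect to be the main obstacle, is to exclude Dynkin components, and for this I would prove the lemma: for each vertex $j$ of $\Gamma$ there exists $c\in\mbN^k$ with $\sum_i c_i\al_i=\delta$ and $c_j\ge 1$.

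To prove the lemma, observe that $\al_j-\delta$ is a real root of $Q$ (as in the excerpt preceding the proposition), so its negation $\delta-\al_j$ is also a root; since $\delta-\al_j\in\mbN^I\setminus\{0\}$ and $\la\cdot(\delta-\al_j)=\la\cdot\delta-\la\cdot\al_j=0$, it is a positive real root with $\la$-pairing zero. An induction on $\sum_i\gamma_i$ then shows that every positive root $\gamma$ of $Q$ with $\la\cdot\gamma=0$ admits a decomposition $\gamma=\sum_i d_i\al_i+m\delta$ with $d_i,m\in\mbN$: if $\gamma\in\Sigma^{\rm re}_\la$ the decomposition is trivial; if $\gamma$ is a positive imaginary root, then $\gamma=n\delta$ since $Q$ is extended Dynkin; otherwise $\gamma$ splits by the defining property of $\Sigma^{\rm re}_\la$ into smaller positive roots with $\la$-pairing zero, to which induction applies. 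Applied to $\delta-\al_j$, which is strictly less than $\delta$ entrywise, the $m\delta$ contribution must vanish, so $\delta-\al_j=\sum_i d_i\al_i$ and $c=\ep_j+d$ does the job.

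Finally, if some connected component $\Gamma_1$ with vertex set $V_1$ were Dynkin, pick $j\in V_1$ and take $c$ from the lemma. Splitting $c=\sum_\ell c^{(\ell)}$ by component, we get $q_\Gamma(c)=\sum_\ell q_{\Gamma_\ell}(c^{(\ell)})$ (no arrows run between components), which by (a) equals $q_Q(\delta)=0$. Each summand is nonnegative, hence vanishes; positive-definiteness of $q_{\Gamma_1}$ (the Dynkin case) forces $c^{(1)}=0$, contradicting $c_j\ge 1$. Hence every connected component of $\Gamma$ is extended Dynkin, completing (b).
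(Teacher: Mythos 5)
Your proof is correct and takes essentially the same route as the paper: part (a) is the same bilinear-form expansion, and part (b) is proved exactly as in the paper by writing $\delta-\al_j$ as a sum of elements of $\Sigma_\la^{\rm re}$ (the paper takes a decomposition of maximal length where you run an induction allowing an $m\delta$ term that is then excluded) and concluding from $q_{\Ga}(d)=q_Q(\delta)=0$ with $d_j>0$ that the component of $j$ is not Dynkin. One cosmetic slip: $\delta-\al_j$ need not be strictly smaller than $\delta$ at \emph{every} vertex, but your conclusion $m=0$ still stands, since for $m\ge 1$ the vector $(1-m)\delta-\al_j$ would have a negative entry while $\sum_i d_i\al_i\in\mbN^I$.
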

%%%%%
\begin{proof}
(a)
For $d=(d_i)\in\mbZ^k$, we have
\[
2q_Q(\Sigma_{i=1}^kd_i\al_i) = \Sigma_{i,j}(d_i\al_i, d_j\al_j)_Q = \Sigma_{i,j}d_id_j(\al_i,\al_j)_Q = 2q_{\Ga}(d),
\]
where $(-,-)_Q$ is the symmetric bilinear form of $Q$.
Thus we have the assertion.

(b) By (a), $q_{\Ga}$ is positive semi-definite, since so is $q_Q$. Therefore $\Ga$ is a disjoint union of Dynkin quivers and extended Dynkin quivers. We show that the connected components of $\Ga$ are not Dynkin. For $\al_j\in \Sigma_\la^{\rm re}$, the element $\beta = \delta-\al_j$ is a positive real root and $\la\cdot\beta=0$. Consider a decomposition $\beta=\gamma_1+\dots+\gamma_r$ with the $\gamma_i$ positive (and necessarily real) roots with $\la\cdot\gamma_i=0$, and with $r$ as large as possible. The maximality implies that $\gamma_i\in\Sigma_\la^{\rm re}$, so each $\gamma_i$ is an $\al_\ell$ for some $\ell$. Collecting terms, this implies that for any vertex $j$ of $\Ga$, there is a dimension vector $d=(d_i)\in\mbN^k$ such that $d_j>0$ and $\delta=\Sigma_{i=1}^k d_i\al_i$. By (a), we have $q_{\Ga}(d)= q_Q(\delta)=0$. Thus connected component of $\Ga$ containing vertex $j$ cannot be Dynkin.
\end{proof}

\begin{lem}\label{lem-qQ-qGa}
For $d\in \mbZ^k$, $\Sigma_{i=1}^k d_i\al_i$ is a multiple of $\delta$ if and only if the restriction of $d$ to each connected component of $\Ga$ is a multiple of the radical vector for that component.
\end{lem}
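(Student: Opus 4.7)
The strategy is to recognize both conditions as the statement that a certain vector lies in the radical of a positive semi-definite integral quadratic form, and then to relate the two radicals via Proposition~\ref{pro-comp-ext}(a). The key input is the standard fact that for a positive semi-definite integral quadratic form $q$ on a lattice, a vector $v$ lies in the radical $\{v:(v,w)=0 \text{ for all }w\}$ if and only if $q(v)=0$; this is immediate by diagonalising over $\mathbb{Q}$ (or by Cauchy-Schwarz). Since $Q$ is extended Dynkin, $q_Q$ is positive semi-definite, and by Proposition~\ref{pro-comp-ext}(b), $\Gamma$ is a disjoint union of extended Dynkin quivers, so $q_\Gamma$ is positive semi-definite as well.

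Next I would use Proposition~\ref{pro-comp-ext}(a), which gives $q_\Gamma(d)=q_Q(\sum_{i=1}^k d_i\alpha_i)$ for all $d\in\mathbb{Z}^k$. Combined with the previous observation, this yields the equivalence
\[
d \in \operatorname{rad}(q_\Gamma) \iff q_\Gamma(d)=0 \iff q_Q\Bigl(\sum_{i=1}^k d_i\alpha_i\Bigr)=0 \iff \sum_{i=1}^k d_i\alpha_i \in \operatorname{rad}(q_Q).
\]

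To finish, I would identify each radical in integral terms. For the connected extended Dynkin quiver $Q$, the rational radical of $q_Q$ is one-dimensional, spanned by $\delta$, and since $\delta$ is primitive, its integer points are exactly $\mathbb{Z}\delta$; thus $\sum_i d_i\alpha_i\in\operatorname{rad}(q_Q)$ if and only if $\sum_i d_i\alpha_i$ is an integer multiple of $\delta$. Applying the same fact componentwise to $\Gamma$ (each connected component is extended Dynkin with its own primitive radical vector), a vector $d\in\mathbb{Z}^k$ lies in $\operatorname{rad}(q_\Gamma)$ if and only if the restriction of $d$ to each connected component is an integer multiple of the radical vector for that component. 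Combining with the previous display gives the lemma.

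The argument is essentially bookkeeping once Proposition~\ref{pro-comp-ext} is available; the only mildly delicate point is the passage from rational to integer multiples of radical vectors, which is handled by the primitivity of the minimal imaginary root of an extended Dynkin quiver.
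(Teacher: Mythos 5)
Your proof is correct and is essentially the argument the paper intends: the paper's proof is just the one-line remark that the lemma follows from Proposition~\ref{pro-comp-ext}, and your write-up fills in exactly the expected details (radical $=$ null set of a positive semi-definite form, transfer via $q_\Gamma(d)=q_Q(\Sigma_i d_i\alpha_i)$, and primitivity of the radical vectors to pass to integer multiples). No gaps; nothing further needed.
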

%%%%%
\begin{proof}
This assertion directly follows from Proposition \ref{pro-comp-ext}.
\end{proof}
%%%%%%%%%%%%%%%%%%%%
\begin{lem}\label{lem-del-del}
Let $d=(d_i)\in\mbN^k$ such that $\Sigma_{i=1}^kd_i\al_i=\delta$ with $d_1>0$.
Let $\Ga'$ be the connected component of $\Ga$ containing the vertex $1$.
We denote by $\Ga'_0=\{1,\dots, s\}$ the set of vertices of $\Ga'$.
\begin{enumerate}[{\rm (a)}]
    \item We have $d_i=0$ for $i>s$.
    \item The vector $(d_1, d_2, \dots, d_s)$ is the minimal positive imaginary root $\delta'$ for $\Ga'$.
\end{enumerate}
\end{lem}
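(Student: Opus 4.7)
The plan is to leverage the fact that $q_Q$, being the quadratic form of a connected extended Dynkin quiver, is positive semi-definite with one-dimensional radical $\mbQ\delta$. Since $\sum_{i=1}^k d_i\al_i = \delta$ is (trivially) a multiple of $\delta$, Lemma~\ref{lem-qQ-qGa} applies and gives $d|_C = m_C\delta_C$ for each connected component $C$ of $\Ga$, where $\delta_C$ denotes the generator of the radical of $q_{\Ga}$ restricted to $C$. Such a generator exists and is the minimal positive imaginary root of $C$ because, by Proposition~\ref{pro-comp-ext}(b), every component of $\Ga$ is extended Dynkin. Nonnegativity of $d$ and of $\delta_C$ forces $m_C \in \mbN$.

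For each component $C$ of $\Ga$, set $\beta_C := \sum_{i\in C_0}(\delta_C)_i\al_i \in \mbN^I$. Proposition~\ref{pro-comp-ext}(a) yields $q_Q(\beta_C) = q_{\Ga}(\delta_C) = 0$, so $\beta_C$ lies in the radical $\mbQ\delta$ of $q_Q$. Since $\delta$ has all entries positive (because $Q$ is connected extended Dynkin) and is primitive, while $\beta_C$ is a nonzero nonnegative integer combination of the positive real roots $\al_i$, we conclude $\beta_C = n_C\delta$ for some integer $n_C \geq 1$. Substituting back gives
\[
\delta \;=\; \sum_{i=1}^k d_i\al_i \;=\; \sum_C m_C\beta_C \;=\; \Bigl(\sum_C m_Cn_C\Bigr)\delta,
\]
hence $\sum_C m_Cn_C = 1$. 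Because $m_C \geq 0$ and $n_C \geq 1$ for every $C$, exactly one component $C^\ast$ must satisfy $m_{C^\ast} = n_{C^\ast} = 1$, and $m_C = 0$ for all other $C$. The hypothesis $d_1 > 0$ forces $C^\ast = \Ga'$, which immediately yields $d_i = 0$ for $i > s$ (part (a)) and $(d_1,\dots,d_s) = \delta_{\Ga'} = \delta'$ (part (b)).

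The main obstacle I anticipate is the step identifying $\beta_C$ as a positive integer multiple of $\delta$: it relies on positive semi-definiteness of $q_Q$ on $\mbQ^I$ (so that $q_Q(v) = 0$ forces $v$ into the radical) together with primitivity and strict positivity of $\delta$ to pin down both the integrality and the sign of $n_C$. Once this identification is in place, both conclusions follow from the one-line arithmetic constraint $\sum_C m_Cn_C = 1$.
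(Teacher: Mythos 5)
Your proof is correct and follows essentially the same route as the paper: both rest on Lemma~\ref{lem-qQ-qGa} (equivalently Proposition~\ref{pro-comp-ext}(a)) to identify restrictions of $d$ to components of $\Ga$ with integer multiples of the radical vectors, and to recognize $\sum_{i\in C_0}(\delta_C)_i\al_i$ as a positive integer multiple of $\delta$, finishing with a short integer-arithmetic argument. The only difference is cosmetic: you sum over all components to get the global constraint $\sum_C m_C n_C = 1$, whereas the paper works on the single component $\Ga'$ and invokes the minimality of $\delta$ to force $\ell = m = 1$.
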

%%%%%
\begin{proof}
By Lemma \ref{lem-qQ-qGa}, a vector $(d_1, d_2, \dots, d_s)$ is a multiple of $\delta'$, so write $(d_1, d_2, \dots, d_s)=\ell \delta'$ for an integer $\ell>0$. Again by Lemma \ref{lem-qQ-qGa}, $\Sigma_{i=1}^s d_i\al_i$ is a multiple of $\delta$. Since $\delta$ is minimal, we have $\delta=\Sigma_{i=1}^s d_i\al_i$ and $d_i=0$ for $i>s$. Moreover by Lemma \ref{lem-qQ-qGa}, $\Sigma_{i=1}^s \delta'_i\al_i$ is a multiple of $\delta$, say $m \delta$ for an integer $m>0$. Then $\delta=\ell \, \Sigma_{i=1}^s \delta'_i \al_i = \ell m \delta$. Thus $\ell=m=1$ and the assertion holds.
\end{proof}

\begin{proof}[Proof of Theorem~\ref{thm-sing-comp-a}]
Part is already proved in Proposition~\ref{pro-comp-ext}.
Let $\Ga'$ be a connected component of $\Ga$ such that $\Ga'_0=\{1,\dots,s\}$ with minimal imaginary positive root $\delta'$. By Lemma \ref{lem-del-del}, the semisimple $\Pi^\la(Q)$-module $\bigoplus_{i=1}^s S_{\al_i}^{\oplus \delta'_i}$ has dimension vector $\delta$. Recall that, for $K$ algebraically closed of characteristic zero, the elements of $\Rep(\Pi^\la(Q), \al)\dbslash\GL(\al)$ are in 1:1 correspondence with the isomorphism classes of semisimple $\Pi^\la(Q)$-modules of dimension vector $\delta$, and by \cite[Theorem 3.2]{B-02}, the non-singular points correspond to the simple modules. Thus this semisimple module represents a singular point of $\Rep(\Pi^\la(Q), \delta)\dbslash\GL(\delta)$. This argument defines a map from the set of connected components of $\Ga$ to the set of singular points of $\Rep(\Pi^\la(Q), \delta)\dbslash\GL(\delta)$.
We denote this map by $\Phi$.

Conversely, let $x\in\Rep(\Pi^\la(Q), \delta)\dbslash\GL(\delta)$ be a singular point. Then $x$ corresponds to a semisimple $\Pi^\la(Q)$-module $M_x=\bigoplus_{i=1}^k S_{\al_i}^{\oplus d_i}$ with $\Sigma_{i=1}^k d_i\al_i=\delta$.
We may assume that $d_1>0$.
Let $\Ga'$ be a connected component of $\Ga$ containing the vertex $1$ with $\Ga'_0=\{1, \dots, s\}$.
By Lemma \ref{lem-del-del}, we have $d_i=0$ for $i>s$.
Namely, $x$ determines a unique connected component $\Ga'$.
We denote this map by $\Psi$.

It is easy to see that $\Psi$ and $\Phi$ are mutually inverse.
\end{proof}

%\bibliographystyle{plain}
%\bibliography{references.bib}

\frenchspacing

\end{document}